\documentclass[12pt,reqno,a4paper]{report}
\usepackage{amsthm}
\usepackage{mathtools}
\usepackage{fancyhdr}
\usepackage{latexsym,amssymb,amsmath}
\usepackage[all,ps,cmtip]{xy}
\usepackage{enumerate}
\usepackage{listings}
\usepackage{courier}
\usepackage{ytableau}
\usepackage{subfig}
\usepackage{tikz-cd}
\usepackage{youngtab}
\usepackage{xfrac}
\usepackage{mathrsfs}
\usepackage{calligra}
\usepackage[refpage]{nomencl}
\newcommand{\bigslant}[2]{{\raisebox{.2em}{$#1$}\big/\raisebox{-.2em}{$#2$}}}

%
\usepackage[toc,page]{appendix}
\newcommand{\shO}  {\mathcal{O}}

\theoremstyle{plain}

\newenvironment{summarythm}[1]
  {\innercustomthm}
  {\endinnercustomthm}

\newtheorem{conjecture}{Conjecture}
\newtheorem{theorem}{Theorem}[chapter]
\newtheorem{lemma}[theorem]{Lemma}
\newtheorem{proposition}[theorem]{Proposition}
\newtheorem{corollary}[theorem]{Corollary}
\newtheorem{claim}[theorem]{Claim}
\theoremstyle{definition}
\newtheorem{definition}[theorem]{Definition}

\newtheorem{example}[theorem]{Example}

\theoremstyle{remark}
\newtheorem{remark}[theorem]{Remark}

\usepackage[section]{placeins}
\usepackage[format=hang]{caption}
\newcommand{\um}{\underline{s}}

\usepackage[format=hang]{caption}
\lstset{
        basicstyle=\footnotesize, 
        breaklines=true,            
}

\newcommand{\defeq }{\vcentcolon=}
\newcommand{\eqdef}{=\vcentcolon}

\newcommand{\NN} {\mathbb{N}}
\newcommand{\ZZ} {\mathbb{Z}}
\newcommand{\QQ} {\mathbb{Q}}
\newcommand{\RR} {\mathbb{R}}
\newcommand{\CC} {\mathbb{C}}
\newcommand{\EE} {\mathbb{E}}
\newcommand{\kk} {\mathbb{C}}

\newcommand{\PP} {\mathbb{P}}
\renewcommand{\AA} {\mathbb{A}}
\newcommand{\GG} {\mathbb{G}}

\newcommand{\VV} {\mathbb{V}}

\newcommand {\shA}  {\mathcal{A}}

\newcommand {\shB}  {\mathcal{B}}
\newcommand {\shC}  {\mathcal{C}}
\newcommand {\shD}  {\mathcal{D}}

\newcommand {\shE}  {\mathcal{E}}
\newcommand {\shF}  {\mathcal{F}}
\newcommand {\shG}  {\mathcal{G}}
\newcommand {\shH}  {\mathcal{H}}
\newcommand {\shI}  {\mathcal{I}}

\newcommand {\shL}  {\mathcal{L}}

\newcommand {\shQ}  {\mathcal{Q}}
\newcommand {\shR}  {\mathcal{R}}
\newcommand {\shS}  {\mathcal{S}}
\newcommand {\shT}  {\mathcal{T}}
\newcommand {\shU}  {\mathcal{U}}
\newcommand {\shProj} {\mathop{\shP roj}\nolimits}
\newcommand {\shP}  {\mathcal{P}}
\newcommand {\shDrap} {\mathop{\shD rap}\nolimits}
\newcommand {\shV}  {\mathcal{V}}

\newcommand {\shY}  {\mathcal{Y}}

\newcommand {\extE} {\shE^{\scriptscriptstyle{\shF}}}


\newcommand {\Amp}  {\operatorname{Amp}}

\newcommand {\Aut}  {\operatorname{Aut}}

\newcommand {\ch} {\operatorname{ch}}

\newcommand {\Conv} {\operatorname{Conv}}

\newcommand {\DF} {\operatorname{DF}}

\newcommand {\End}  {\operatorname{End}}
\newcommand {\Ext}  {\operatorname{Ext}}
\newcommand {\shEnd} {\mathop{\shE nd}\nolimits}

\newcommand {\fAlg} {\mathtt{FAlg}_{\shO_B}}

\newcommand {\Flag} {\operatorname{\shF \mathit{l}}}

\newcommand {\GL}  {\operatorname{GL}}

\newcommand {\id}  {\operatorname{id}}
\newcommand {\im}  {\operatorname{im}}

\newcommand {\kbar} {$\overline{\mbox{K}}$}

\newcommand {\lev} {\operatorname{lev}}

\newcommand {\lra}  {\longrightarrow}

\newcommand{\ot}  {\operatorname{\otimes}}

\newcommand {\Pic}  {\operatorname{Pic}}
\newcommand {\PGL}  {\operatorname{PGL}}
\newcommand {\PSL}  {\operatorname{PSL}}

\newcommand {\Proj} {\operatorname{Proj}}

\renewcommand {\ch} {\operatorname{ch}}

\renewcommand {\shO} {\mathcal{O}}

\newcommand {\ra}  {\rightarrow}

\newcommand {\rank} {\operatorname{rank}}

\newcommand {\Rees} {\operatorname{Rees}}
\newcommand {\shRees} {\mathop{\shR ees}\nolimits}
\newcommand {\Ric}  {\operatorname{Ric}}
\newcommand {\scal}  {\operatorname{Scal}}

\newcommand\restr[2]{{
  \left.\kern-\nulldelimiterspace 
  #1 
  \vphantom{\big|} 
  \right|_{#2} 
  }}

\newcommand {\sgn} {\mathrm{sgn}}

\newcommand {\SL}  {\operatorname{SL}}
\newcommand {\Spec} {\operatorname{Spec}}

\newcommand {\tr}  {\operatorname{tr}}

\newcommand {\test} {\mathop{Test}\nolimits}

\makeatletter
\newcommand{\ex}{\@ifnextchar^\@extp{\@extp^{\,}}}
\def\@extp^#1{\mathop{\bigwedge\nolimits^{\!#1}}}
\makeatother

\makeatletter
\def\Ddots{\mathinner{\mkern1mu\raise\p@
\vbox{\kern7\p@\hbox{.}}\mkern2mu
\raise4\p@\hbox{.}\mkern2mu\raise7\p@\hbox{.}\mkern1mu}}
\makeatother


\newcommand {\scrL} {\mathscr{L}}
\newcommand {\X} {\mathscr{X}}
\newcommand {\Y} {\mathscr{Y}}

\newcommand {\U} {\mathscr{U}}

\def\mydate{\ifcase\month \or January\or February\or March\or
April\or May\or June\or July\or August\or September\or October\or
November\or December\fi \space\number\day,\space\number\year}


\RequirePackage[T1]{fontenc}
\RequirePackage[utf8x]{inputenc}
\RequirePackage{amssymb}
\RequirePackage{stmaryrd}

\DeclareUnicodeCharacter{183}{\cdot}
\DeclareUnicodeCharacter{915}{\ensuremath{\Gamma}}
\DeclareUnicodeCharacter{916}{\ensuremath{\Delta}}
\DeclareUnicodeCharacter{918}{\ensuremath{\Zeta}}
\DeclareUnicodeCharacter{920}{\ensuremath{\Theta}}
\DeclareUnicodeCharacter{923}{\ensuremath{\Lambda}}
\DeclareUnicodeCharacter{926}{\ensuremath{\Xi}}
\DeclareUnicodeCharacter{928}{\ensuremath{\Pi}}
\DeclareUnicodeCharacter{931}{\ensuremath{\Sigma}}
\DeclareUnicodeCharacter{933}{\ensuremath{\Upsilon}}
\DeclareUnicodeCharacter{934}{\ensuremath{\Phi}}
\DeclareUnicodeCharacter{935}{\ensuremath{\Chi}}
\DeclareUnicodeCharacter{936}{\ensuremath{\Psi}}
\DeclareUnicodeCharacter{937}{\ensuremath{\shOmega}}
\DeclareUnicodeCharacter{945}{\ensuremath{\alpha}}
\DeclareUnicodeCharacter{946}{\ensuremath{\beta}}
\DeclareUnicodeCharacter{947}{\ensuremath{\gamma}}
\DeclareUnicodeCharacter{948}{\ensuremath{\delta}}
\DeclareUnicodeCharacter{949}{\ensuremath{\epsilon}}
\DeclareUnicodeCharacter{950}{\ensuremath{\zeta}}
\DeclareUnicodeCharacter{951}{\ensuremath{\eta}}
\DeclareUnicodeCharacter{952}{\ensuremath{\theta}}
\DeclareUnicodeCharacter{953}{\ensuremath{\iota}}
\DeclareUnicodeCharacter{954}{\ensuremath{\kappa}}
\DeclareUnicodeCharacter{955}{\ensuremath{\lambda}}
\DeclareUnicodeCharacter{956}{\ensuremath{\mu}}
\DeclareUnicodeCharacter{957}{\ensuremath{\nu}}
\DeclareUnicodeCharacter{958}{\ensuremath{\xi}}
\DeclareUnicodeCharacter{959}{\ensuremath{\omicron}}
\DeclareUnicodeCharacter{960}{\ensuremath{\pi}}
\DeclareUnicodeCharacter{961}{\ensuremath{\rho}}
\DeclareUnicodeCharacter{963}{\ensuremath{\sigma}}
\DeclareUnicodeCharacter{964}{\ensuremath{\tau}}
\DeclareUnicodeCharacter{965}{\ensuremath{\upsilon}}
\DeclareUnicodeCharacter{966}{\ensuremath{\varphi}}
\DeclareUnicodeCharacter{967}{\ensuremath{\chi}}
\DeclareUnicodeCharacter{968}{\ensuremath{\psi}}
\DeclareUnicodeCharacter{969}{\ensuremath{\omega}}
\DeclareUnicodeCharacter{8214}{\parallel}
\DeclareUnicodeCharacter{8450}{\mathbb{C}}
\DeclareUnicodeCharacter{8470}{\mathbb{N}}
\DeclareUnicodeCharacter{8474}{\mathbb{Q}}
\DeclareUnicodeCharacter{8477}{\mathbb{R}}
\DeclareUnicodeCharacter{8484}{\mathbb{Z}}
\DeclareUnicodeCharacter{8614}{\mathbin{\mapsto}}
\DeclareUnicodeCharacter{8656}{\Leftarrow}
\DeclareUnicodeCharacter{8657}{\Uparrow}
\DeclareUnicodeCharacter{8658}{\Rightarrow}
\DeclareUnicodeCharacter{8659}{\Downarrow}
\DeclareUnicodeCharacter{8669}{\rightsquigarrow}
\DeclareUnicodeCharacter{8797}{\eqdef}
\DeclareUnicodeCharacter{8870}{\vdash}
\DeclareUnicodeCharacter{8873}{\Vdash}
\DeclareUnicodeCharacter{8871}{\models}
\DeclareUnicodeCharacter{9121}{\lceil}
\DeclareUnicodeCharacter{9123}{\lfloor}
\DeclareUnicodeCharacter{9124}{\rceil}
\DeclareUnicodeCharacter{9126}{\rfloor}
\DeclareUnicodeCharacter{9655}{\triangleright}
\DeclareUnicodeCharacter{9665}{\triangleleft}
\DeclareUnicodeCharacter{9671}{\diamond}
\DeclareUnicodeCharacter{9675}{\circ}
\DeclareUnicodeCharacter{10178}{\bot}
\DeclareUnicodeCharacter{10214}{\llbracket} 
\DeclareUnicodeCharacter{10215}{\rrbracket} 
\DeclareUnicodeCharacter{10229}{\longleftarrow}
\DeclareUnicodeCharacter{10230}{\longrightarrow}
\DeclareUnicodeCharacter{10231}{\longleftrightarrow}
\DeclareUnicodeCharacter{10232}{\Longleftarrow}
\DeclareUnicodeCharacter{10233}{\Longrightarrow}
\DeclareUnicodeCharacter{10234}{\Longleftrightarrow}
\DeclareUnicodeCharacter{10236}{\longmapsto}
\DeclareUnicodeCharacter{10238}{\Longmapsto} 
\DeclareUnicodeCharacter{10503}{\Mapsto}    
\DeclareUnicodeCharacter{10971}{\mathrel{\not\hspace{-0.2em}\cap}}
\DeclareUnicodeCharacter{65294}{\ldotp}
\DeclareUnicodeCharacter{65372}{\mid}


\usepackage[twoside, bindingoffset=1cm]{geometry}

\usepackage{url}
\usepackage[colorlinks=false, linktocpage=true]{hyperref}

\title{K-Stability of Relative Flag Varieties}
\author{Anton Isopoussu}


%

\begin{document}

\begin{titlepage}
\vspace*{1mm}
\begin{center}
\Huge K-stability of relative flag varieties
\end{center}
\addvspace{18mm}
\begin{center}
\LARGE Anton Isopoussu
\end{center}
\addvspace{23mm}
\begin{center}
\includegraphics[width=30mm]{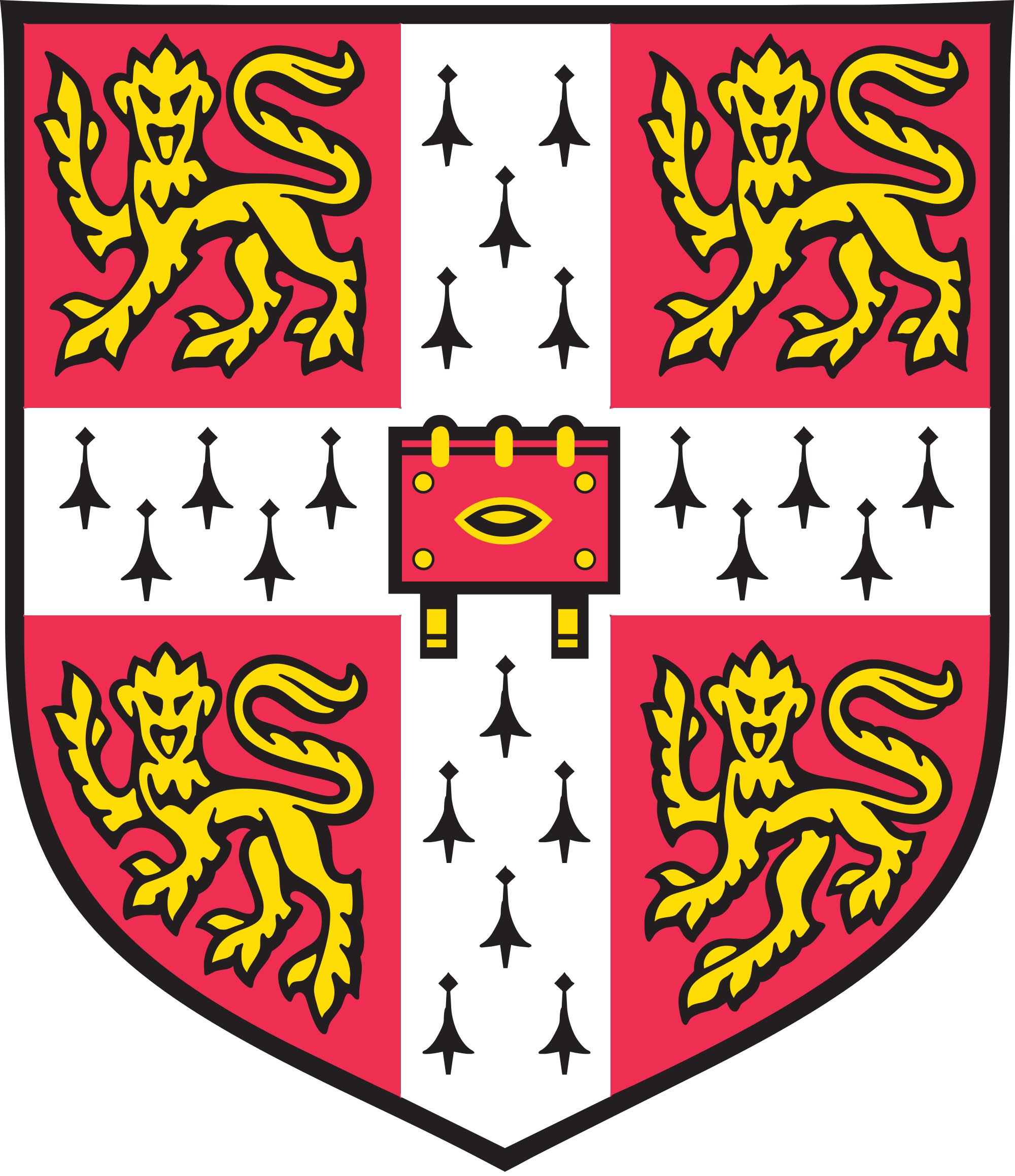}
\end{center}
\addvspace{18mm}
\begin{center}
\Large University of Cambridge\\
Department of Pure Mathematics and Mathematical Statistics\\
Churchill College
\end{center}
\addvspace{8mm}
\begin{center}\Large April 2015\end{center}
\addvspace{12mm}
\begin{center}
This dissertation is submitted for\\
the degree of Doctor of Philosophy
\end{center}
\end{titlepage}
\setcounter{page}{2} 
\pagestyle{empty}

\clearpage

\setcounter{page}{2}

\chapter*{Abstract}
    We generalise partial results about the Yau-Tian-Donaldson correspondence on ruled manifolds to bundles whose fibre is a classical flag variety. This is done using Chern class computations involving the combinatorics of Schur functors. The strongest results are obtained when working over a Riemann surface. Weaker partial results are obtained for adiabatic polarisations in the general case.

     We develop the notion of relative K-stability which embeds the idea of working over a base variety into the theory of K-stability. We equip the set of equivalence classes of test configuration with the structure of a convex space fibred over the cone of rational polarisations. From this, we deduce the openness of the K-unstable locus. We illustrate our new algebraic constructions with several examples.

\thispagestyle{empty}
\chapter*{Declaration}
\thispagestyle{empty}
This dissertation is the result of my own work and includes nothing which is the outcome of work done in collaboration except as declared in the Preface and specified in the text.

It is not substantially the same as any that I have submitted, or, is being concurrently submitted for a degree or diploma or other qualification at the University of Cambridge or any other University or similar institution except as declared in the Preface and specified in the text. I further state that no substantial part of my dissertation has already been submitted, or, is being concurrently submitted for any such degree, diploma or other qualification at the University of Cambridge or any other University of similar institution except as specified in the text
\vspace{15mm}

\noindent Anton Isopoussu\\
\today

\chapter*{Acknowledgements}
\thispagestyle{empty}

I gratefully acknowledge the patient guidance of my supervisor Dr. Julius Ross and the financial support of Osk. Huttunen foundation. Ruadhai Dervan has contributed extensively with invaluable mathematical conversations, sarcasm and comments on my drafts. I also thank Yoshinori Hashimoto for many stimulating discussions. The other members of the group, Cristiano Spotti and David Witt-Nyström have also always been generous with their expertise.

My good angels Alexandra, Hilary and Rick have greatly facilitated the completion of this work, while my friends at Churchill College have offered many memorable distractions over the years from engaging conversations and late night jam sessions to feisty football games in the intercollegiate leagues.

I also wish to acknowledge my teachers and peers who made me fall in love with mathematics. I thank Juhani Pitkäranta $ε$-$δ$-games, Kirsi Peltonen for subriemannian geometry, Matti Vuorinen for quasiregular mappings, Ville Turunen for noncommutative geometry, Juha Kinnunen for Sobolev spaces, Tapani Hyttinen the for liar paradox in number theory, Jouko Väänänen for teams, Sören Illman for \v{C}ech cohomology, Pelham Wilson for Hodge theory and Caucher Birkar for schemes. I also thank my fellow students Samu Alanko for Riemannian sums, Ville Pettersson for grid graphs, Hannu Nissinen for model games, Monique van Beek for elliptic curves with torsion points and John Christian Ottem for much magic with K3 surfaces. \tableofcontents
\pagestyle{plain}
\chapter{Introduction} 
\section{Introduction} 
\label{sec:introduction}
In this thesis we study a mysterious relationship between complex differential geometry and algebraic geometry which has been established around the existence of a best possible Kähler form on a projective complex manifold. Recall that a Kähler manifold is a pair $(X,ω)$ where $X$ is a complex manifold and $ω$ is a closed positive nondegenerate differential form of type (1,1). Kähler manifolds have a wealth of good properties which belies their simple definition. It is natural to study the problem of finding a best possible Kähler metric on $X$.

A wonderfully rich picture arises already for Riemann surfaces, which have been studied both algebraically and analytically for more than a century. The famous Uniformisation Theorem of Poincar\'{e} and Köbe states that a compact Riemann surface can be written as a quotient of a model space, either the hyperbolic disk, the flat complex plane or the round sphere. Alternatively, this can be stated by saying that any compact Riemann surface can be endowed with a metric, unique up to a constant, whose sectional curvature is constant. On the algebraic side, the compactification of the moduli space of curves is, of course, one of the major accomplishments of modern algebraic geometry. The two points of view are connected, for example, in the definition of the Weil-Petersson metric on the compact moduli space of algebraic curves.

A pair $(X,L)$, where $X$ is a variety defined over the complex numbers and $L$ is an ample line bundle, is called a \emph{polarised variety}. We assume for now that $X$ is smooth. A \emph{canonical metric} on the polarised variety $(X,L)$ is a Kähler form $ω$ which is a solution to some naturally defined differential equation, is unique up to an automorphism of $X$ and whose cohomology class is equal to $c_1(L)$. Canonical metrics in this sense are one natural generalisation of the Uniformisation Theorem to higher dimensions. Given the existence of constant sectional curvature metrics on Riemann surfaces, it is tempting to conjecture that canonical metrics should always exist. This turns out to be a subtle question, which has inspired a wealth of new mathematics at the intersection of complex and algebraic geometry.

The theory of \emph{K-stability} connects the question of existence of canonical metrics on higher dimensionals polarised varieties to algebraic geometry. K-stability is a conjecturally equivalent condition to the existence of a canonical metric on $(X,L)$. We call this the Yau-Tian-Donaldson (YTD) correspondence.

A key idea that originates from the work of Hilbert and Mumford is that one can associate numerical invariants to degenerations of $(X,L)$. Let $m$ be a natural number and consider a projective embedding
\begin{equation}
    X\subset \PP^n = \PP(H^0(X,L^m))
\end{equation}
and an action of the multiplicative group $\GG_m$ on $\PP^n$, which acts linearly on the hyperplane bundle on $\PP^n$. Then the orbit of $X$ under the $\GG_m$-action is a family of copies of $(X,L^m)$ which can be compactified over the point $t\to 0$ in $\GG_m$. The resulting family $\X$, which has a special fibre $(X_0, L_0)$ invariant under the $\GG_m$-action, is called a \emph{test configuration}. The precise definition is given in Definition \ref{def:TCdef}. The group $\GG_m$ has a representation on the space of sections of the line bundle $L_0$ which determines an important numerical invariant called the \emph{Donaldson-Futaki invariant}.

With certain refinements which will be discussed in the text, we say that $(X,L)$ is \emph{K-stable} if the Donaldson-Futaki invariant $\DF(\X)$, which will be defined by Equation \eqref{eq:futdef}, is positive for all test configurations $\X$. Otherwise, we say $(X,L)$ is \emph{K-unstable}. Paraphrasing the earlier discussion, a negative Donaldson-Futaki invariant is a conjectural obstruction to the existence of a canonical metric.

Most of this work is dedicated to the study of Donaldson-Futaki invariants in a simple example. We say that a variety $Y$ is a \emph{flag bundle} if it comes with a Zariski-locally trivial projection $p\colon Y\rightarrow B$ to a projective variety $B$, such that the fibres of $p$ are isomorphic to a flag variety. This is a natural generalisation of a geometrically ruled manifold which is the single most studied example in the theory of K-stability. The only rival to this status are toric varieties. Flag bundles retain many of the properties of geometrically ruled manifolds while exhibiting new features which make them worthy of an extended discussion, such as a larger Picard group and richer geometric structure. Flag bundles also provide a working example to test a folklore conjecture that the stability properties of the underlying vector bundle should determine the K-stability of its associated projective manifolds. We give a partial affirmative answer to this conjecture.

Preliminary material is presented in Chapters \ref{chap:preliminaries} and \ref{chap:stability}. The former recalls basic notions of group actions on algebraic varieties and introduces the reader to flag bundles in more detail. The latter is an introduction to the theory of K-stability. Chapter \ref{chap:chern} contains a technical result, which will be crucial in the computation of Donaldson-Futaki invariants in Chapter \ref{chap:flags} where we construct destabilising test configurations for flag bundles. Chapter \ref{chap:UTF} is independent of the rest of the text in which we describe a generalisation of the Uniformisation Theorem to flag bundles whose underlying vector bundle is a polystable vector bundle over a Riemann surface. We thus obtain partial results towards a YTD correspondance on flag bundles. We give additional examples of K-unstable varieties in Chapter \ref{chap:sub}, where we study the K-stability of complete intersections.

Chapter \ref{chap:filtrations} is almost entirely independent of the rest of the work and will discuss a general theme that arises from the particularly simple type of test configuration that was used in previous chapters. Families of simple projective varieties have been a rich source of examples in the past \cite{apostolov2006stability,apostolov2008hamiltonian,Fine2004,fine2005fibrations,KellerRoss,lu2014extremal,RossThomas,Seyyedali2010,Stoppa2010}. We define and attempt to justify the notion of \emph{relative K-stability}. Roughly speaking this term refers to dividing the set of test configurations for $(X,L)$ into collections of simpler test configurations, each of which linked to a projective morphism $X\rightarrow B$, where $B$ is a projective variety.

We develop the theory of filtrations of sheaves with a view towards studying relative K-stability. This generalises the work by Székelyhidi \cite{szekelyhidi2011filtrations} and Witt-Nyström \cite{witt2012test}. Certain constructions of new test configurations from old have already appeared in the work of Ross and Thomas \cite{RossThomas}. We contextualise them using the language of filtrations and obtain new constructions, which we hope will be helpful in exhibiting interesting new behaviour of K-stability in the Kähler cone. We focus particularly on a weighted tensor products on filtered algebras, which allow us to endow the set of test configurations, up to some natural identifications, with a convex structure which is naturally fibred over the cone of polarisations. We show that Donaldson-Futaki invariants behave well under this construction which, in particular, imples the openness of the K-unstable locus.

 %

\section{Background} 
\label{sec:background}
There are three natural higher dimensional analogues to constant sectional curvature metrics in Kähler geometry. A Kähler form $ω$ is \emph{extremal} if the complex gradient vector field of its scalar curvature is holomorphic. The form $ω$ has constant scalar curvature (cscK) if this gradient vector field vanishes identically. This coincides with the usual requirement that the scalar curvature function is constant. The simplest case is to consider the equation
\begin{equation}
    \Ric ω = Cω,
\end{equation}
where $C$ is a constant and $\Ric ω$ is the Ricci form. These metrics are called \emph{Kähler-Einstein} and they form an important special class of cscK metrics.

Uniqueness was proved in increasing generality by Bando and Mabuchi \cite{bando1985uniqueness}, Chen \cite{chen2000space}, Donaldson \cite{DonaldsonScalar}, Mabuchi \cite{mabuchi2004uniqueness} and Berman and Berndtsson \cite{berman2004convexity}, who showed that an extremal metric on an arbitrary Kähler manifold $(X,ω)$ is unique up to automorphisms.

\subsection{Kähler-Einstein metrics and the history of the YTD correspondence} 
\label{sub:KEYTD}

The cohomology class of a Kähler-Einstein metric is equal to a multiple first Chern class $c_1(X)$ of $X$ so Kähler-Einstein metrics can only exist if the first Chern class $c_1(X)$ has definite sign. This is a major topological restriction on $X$. If $c_1(X)$ is trivial, the famous Calabi-Yau theorem \cite{yau1978ricci} states that there is a unique KE metric up to automorphism. In the case $c_1(X)<0$, Yau proved that there is a unique KE metric up to scale and automorphisms of $X$.

The case $c_1(X) > 0$ is more complicated. Matsushima showed that if the automorphism group of $X$ is not reductive, then $X$ does not admit a Kähler-Einstein metric. Donaldson-Futaki \cite{futaki1983obstruction} found another obstruction related to certain pathological vector fields on $X$. Yau then posed the problem of relating the problem of existence of Kähler-Einstein metrics to a stability notion in algebraic geometry \cite{yau2000open}. Ding and Tian proposed \emph{K-stability} as a conjectural solution to Yau's problem \cite{ding1992kahler} defined using an ingenious combination of Futaki's work with algebraic degenerations of $X$. Donaldson gave the fully algebraic definition of K-stability \cite{Donaldson2002}, which is used in this work with minor modifications.

The equivalence between the existence of a Kähler-Einstein metric and K-stability was proved by Chen, Donaldson and Sun which settled one of the most famous modern conjectures in geometry. The problem has inspired many novel ideas, such as the algebraisation of Gromov-Hausdorff limits \cite{donaldson2012gromov}, which is a technique of endowing a limiting object in Riemannian geometry under certain hypotheses with the structure of an algebraic variety. The continuity method for metrics with cone singularities is another new construction that was crystallised in the work of Chen, Donaldson and Sun. These two key ideas were beautifully embedded in the proof of the following theorem \cite{donaldson2012kahler}.
\begin{theorem}[\cite{chen2012kahler,chen2012kahler2,chen2013kahler3,berman2012k}]
    The pair $(X,-K_X)$ is K-stable if and only if $X$ admits a Kähler-Einstein metric.
\end{theorem}

\subsection{The Yau-Tian-Donaldson conjecture for constant scalar curvature Kähler metrics} 
\label{sub:overview}
Constant scalar curvature Kähler metrics can be defined by the equation
\begin{equation}
    \scal(ω) = C,
\end{equation}
where $\scal(ω)$ is defined by the equation $\scal(ω)ω^n = \dim X \Ric(ω)\wedge ω^{n-1}$ with $n$ denoting the dimension of the manifold $X$, and $C$ is a constant. CscK metrics on an arbitrary polarised manifold is the first natural generalisation of the Kähler-Einstein YTD correspondence. We say that $(X,L)$ is cscK if $X$ admits a metric in $c_1(L)$ which is cscK. Donaldson made the following conjecture.
\begin{conjecture}[The Yau-Tian-Donaldson conjecture \cite{Donaldson2002}]\label{conj:ytd}
    Let $(X,L)$ be a polarised smooth complex variety. Then there is a constant scalar curvature Kähler cscK metric in the class $c_1(L)$ if and only if $(X,L)$ is K-polystable.
\end{conjecture}
We refer to Definition \ref{def:kstab} for the definition of K-polystability.

\begin{remark}
    Li-Xu \cite{li2011special} gave an example which contradicted the YTD correspondence as it was originally stated, which included certain pathological test configurations. The solution offered by Li-Xu was to only consider normal test configurations. We follow an alternative convention due to Stoppa \cite{stoppa2011note}, which is to allow nonnormal test configurations whose normalisations are not \emph{trivial}. Székelyhidi used yet another convention by restricting to test configurations with positive \emph{norm}. The final point of view was proven to be equivalent with the first two by Dervan \cite{dervan2014uniform}. The norm and triviality of a test configuration are defined in Section \ref{sec:the_definition_of_k_stability}.
\end{remark}

\subsection{K-stability of cscK manifolds} 
\label{sub:k_stability_of_csck_manifolds}

Donaldson proved an elegant formula which relates scalar curvature with Donaldson-Futaki invariants explicitly.
\begin{proposition}[{\cite{Donaldson2008}}]\label{prop:stab}
    Let $(X,L,ω)$ be a polarised Kähler manifold with $2πω = c_1(L)$ and let $\X$ be a test configuration for $(X,L)$. The following lower bound holds for the Calabi functional
    \begin{equation}
        \|\scal(ω)-\overline{\scal}(ω)\|_{L^2(ω^n)}  \geq -c\frac{\DF(\X)}{\|\X\|}
    \end{equation}
    for some positive constant $c$ independent of the test configuration $\X$ and the Kähler form $ω$. Here $\scal(ω)$ is the scalar curvature of $ω$, $\scal(ω)$ is its average, the norm is taken with respect to integrating with the volume form induced by $ω$, and the quantity $\|\X\|$ is called the \emph{norm} of the test configuration $\X$.

    In particular, if $(X,L)$ is cscK, then it is K-semistable.
\end{proposition}
Arezzo and Pacard constructed cscK metrics on blowups of points of cscK manifolds assuming that the volume of the exceptional divisor is small.
\begin{proposition}[{\cite{arezzo2006blowing}}]\label{prop:AP}
    Let $(X,L)$ be a polarised cscK Kähler manifold with a discrete automorphism group and let $Y$ be the blowup of a point on $X$ with $p:Y\rightarrow X$ being the projection. Then there exists a positive number $ε_0$ such that there is a constant scalar curvature metric on $(Y,p^*L-εE)$ for $0<ε<ε_0$. Here $E$ is the exceptional divisor on $Y$.
\end{proposition}
Stoppa noticed that the Donaldson-Futaki invariant of a particular test configuration $\Y$ on $(Y,L-εE)$, using notation from Proposition \ref{prop:AP} is equal to
\begin{equation}
    \DF(\Y) = \DF(\X) - Cε^{-n+1} + O(ε^{-n}),
\end{equation}
where $\DF(\Y)$ and $\DF(\X)$ are the Donaldson-Futaki invariants of $\Y$ and $\X$, respectively, and $C$ is a positive constant. Stoppa then deduced one implication of the YTD conjecture.
\begin{proposition}[{\cite{Stoppa2009}}]\label{prop:CscKtoStab}
    Let $(X,L)$ be a polarised variety with a discrete automorphism group and assume $(X,L)$ is cscK. Then $(X,L)$ is K-stable.
\end{proposition}
Finally, Berman proved the K-polystability of an anticanonically polarised Fano variety admitting a cscK metric \cite{berman2012k}.


\subsection{Projective bundles} 
\label{sub:pbundles}
Producing cscK metrics remains the main method of finding examples of K-stable varieties since the nonexistence of a test configuration with vanishing or negative Donaldson-Futaki invariant is difficult to prove otherwise. No general method for constructing cscK metrics is known either, but partial results are known in special cases. We believe that eventually the locus of K-stable polarisations in the Kähler cone of $X$ should yield to an explicit description, at least in interesting examples. Projective bundles, and slightly more generally flag bundles, are the simplest nontrivial examples.

We consider the bundle $\PP E$ over a smooth projective variety $B$ whose fibres are spaces of 1-dimensional quotients of a holomorphic vector bundle $E$. Let $\shO(1)$ denote the relative hyperplane bundle on $\PP E$, fix a line bundle $A$ on $B$ and assume that the line bundle
\begin{equation}
    \shL(A) = \shO(d)\ot p^* A.
\end{equation}
is ample. Using the theory of \emph{slope stability} and results of Narasimhan and Sesadri, Ross and Thomas proved that the K-stability of a projective bundle on a curve is very closely related to the K-stability of the base and the stability of the underlying vector bundle.
\begin{theorem}[\cite{Narasimhan1965,RossThomas}]\label{thm:RTC}
    Assume that $B$ is of complex dimension one. Then $E$ is Mumford (semi/poly)stable if $(X,\shL(A))$ is slope (semi/poly)stable. If $E$ is polystable, then $(X,\shL(A))$ admits a cscK metric. Conversely, if $E$ is strictly unstable, then $(X,\shL(A))$ does not admit a cscK metric, and if $E$ is not polystable, then $(X,\shL(A))$ is not K-polystable.
\end{theorem}
Without the assumption on the dimension of $B$, Ross and Thomas proved the following theorem using a result of Hong \cite{Hong2002},
\begin{theorem}[{\cite{Hong2002,RossThomas}}]
    Assume that $A$ is an ample line bundle on $B$. Then $E$ is slope stable if there exists an $m_0$ depending on $B$, $A$ and $E$ such that $(X,\shL(A^m))$ is K-stable for $m>m_0$. Conversely, if $E$ is strictly unstable, then $(X,\shL(A))$ does not admit a cscK metric, and if $E$ is not polystable, then $(X,\shL(A))$ is not K-polystable.
\end{theorem}

Lu and Seyyedali \cite{lu2014extremal} generalised Donaldson's perturbation method \cite{DonaldsonScalar} and constructed extremal metrics in adiabatic classes on projective bundles. Similar techniques have been used by Seyyedali \cite{Seyyedali2010} and \cite{KellerRoss} to construct \emph{balanced metrics} in adiabatic classes on projective bundles. Balanced metrics and asymptotic Chow stability have a pivotal role in the development of the theory of K-stability which is eloquently described in \cite{DonaldsonScalar}. As a general rule, many of the difficult constructions in the theory of K-stability are usually known for projective bundles because of their simplicity.

More explicit constructions are carried out on certain simpler projective bundles by Székelyhidi \cite{szekelyhidi2007extremal,szekelyhidi2009calabi} and Apostolov, Calderbank, Gauduchon and T\o nnesen-Friedman \cite{apostolov2006stability,apostolov2008hamiltonian,apostolov2011extremal}. Apostolov and T\o nnesen-Friedman show in particular that the YTD conjecture holds for geometrically ruled surfaces \cite{apostolov2006remark}.

An example of a $\PP^1$-bundle $Y$ over a product of three high genus curves with a fascinating property is constructed in  \cite{apostolov2008hamiltonian}. The authors prove an analytic obstruction to the existence of an extremal metric and then construct the same obstruction using the theory of slope stability. A priori, slope stability yields a family of test configurations parametrised by an interval in the rational numbers, but this can be formally extended to an interval in the reals, where the obstruction defined in \cite{apostolov2008hamiltonian} appears. It is widely conjectured that no algebraic test configuration destabilises the projective bundle $Y$.

\subsection{Generalisations of the YTD correspondence} 
\label{sub:generalisations}
Before stating our results, we briefly list various generalisations of Conjecture \ref{conj:ytd} that have appeared in the literature.  In its most general form, the Yau-Tian-Donaldson correspondence can be understood to mean the following statement about the existence of special metrics and stability.
    \begin{center}
    {\emph{There is a canonical metric (of specified type) in the class $c_1(L)$ if and only if the projective variety $(X,L)$ is K-stable (in the appropriate sense)}}
    \end{center}
The correspondences that are known to us are summarised in the following list.
\begin{enumerate}[(1)]
    \item The existence of \emph{cscK metrics} on a smooth polarised variety is equivalent to \emph{K-stability} \cite{Donaldson2002}
    \item The existence of \emph{extremal metrics} on smooth polarised varieties is equivalent to \emph{K-stability relative to infinitesimal automorphisms} -- \cite{szekelyhidi2007extremal,stoppa2011relative}
    \item The existence of \emph{Orbifold cscK metrics} on polarised orbifolds is equivalent to \emph{orbifold K-stability}  \cite{ross2011weighted}
    \item The existence of \emph{cscK metrics with cone singularities along a divisor $D$} on a smooth polarised variety is equivalent to \emph{K-stability relative to the divisor $D$} \cite{donaldson2012kahler}
    \item The existence of \emph{twisted cscK metrics} on a smooth polarised variety is equivalent to \emph{twisted K-stability}  \cite{Fine2004,stoppa2009twisted,dervan2014uniform}
\end{enumerate}



\section{Notation and conventions}\label{sec:conventions}
\label{sec:summary_of_notation}

\paragraph{Notation}
\begin{itemize}\itemsep1.5pt
    \item $X,Y,B,C$ are schemes, $\dim_{\CC}C=1$
    \item $\shE,\shF,\shG$ are coherent sheaves.
    \item $\shE^*$ is the dual sheaf of $\shE$.
    \item $E,F,Q$ are vector bundles.
    \item $r_E$ is the rank of the vector bundle $E$.
    \item $L,\shL,\scrL$ and $A$ are line bundles.
    \item $\shA,\shB$ are graded sheaves of $O_B$-algebras which are generated at degree 1.
    \item $F_\bullet,G_\bullet$ and $H_\bullet$ are filtrations of a vector space or a sheaf.
    \item $\GG_m$ is the multiplicative group $\Spec \CC[s,s^{-1}]$, often denoted as $\CC^\times$.
    \item $\AA^1$ is the complex affine line $\Spec\CC[x]$.
    \item $\PP^n$ is the complex projective space $\Proj\CC[x_0,\dotsc,x_n]$.
    \item $\shProj_B\shA$ is the relative proj of $\shA$.
    \item $\PP\shF$ is the scheme $\shProj \bigoplus_{k=0}^\infty S^k\shF$.
    \item $λ,μ,ν$ are partitions of positive integers $|λ|$, $|μ|$ and $|ν|$, respectively, page \pageref{sec:schurdef}.
    \item $r$ is a finite strictly increasing sequence of natural numbers whose largest entry is smaller than a fixed integer $r_E$, page \pageref{def:rseq}.
    \item $S_λ(\shE)$ is the ring $\bigoplus_{k = 0}^\infty \shE^{kλ}$, page \pageref{def:shapealgebra}.
    \item $S(\shE)$ is the ring $\bigoplus_{k = 0}^\infty \shE^{(k)}$, page \pageref{ex:schurpows}.
    \item $\Flag_r(E)$ is the flag bundle of $r$-quotients of $E$, page \pageref{def:flagscheme}.
    \item $σ_{r_E,r}$ is the canonical partition corresponding to the integer ${r_E}$ and the tuple $r$, \pageref{def:canonical}.
    \item $B_i(E,λ)$ are Chern classes appearing in the expression for the Chern character of the bundle $E^λ$, page \pageref{thm:maina}.
    \item $A_i(E,λ)$ are special cases of $B_i(E,λ)$ for $λ = (k)$ for some natural number $k$, \pageref{lem:sym_chern}.
    \item $\test(X,L)$ is the set of test configurations on a polarised scheme $(X,L)$ \pageref{def:kstab}.
    \item $D_{λ,r_E}$ is the leading coefficient of the Hilbert polynomial of a polarised flag variety corresponding to the integer ${r_E}$ and the partition $λ$, \pageref{rem:leadingcoeff}.
    \item $N^λ_{ν,μ}$ are Littlewood-Richardson coefficients, page \pageref{eq:LWR}.
    \item $C_{g,E,A,λ}$ and $D_{E,λ,L,f}$ are positive coefficients appearing in the expressions for the Donaldson-Futaki invariant of a flag bundle, pages \pageref{eq:Cpos} and \pageref{eq:Ddef}.
    \item $F_\bullet,G_\bullet$ and $H_\bullet$ are filtrations, pages \pageref{rem:ringfilt} and \pageref{def:relativefilt}.
    \item $\fAlg$ is the category of admissibly filtered sheaves of algebras, \pageref{def:falg}.
\end{itemize}

\paragraph{Conventions and terminology}
\begin{itemize}\itemsep1.5pt
    \item A \emph{polarised variety} is a pair $(X,L)$, where $X$ is a complex variety and $L$ an ample line bundle on $X$.
    \item A vector bundle is identified with its locally free sheaf of sections
    \item We use the common abbreviation $m\gg 0$, which means that there exists an $m_0$ such that a statement holds for all $m>m_0$
    \item Given a sheaf $\shF$ on $B$, the fibre $\shF\otimes k(x)$ is written as $\shF_x$.
    \item Given a family $\X\rightarrow \AA^1$, we denote the fibres over closed points of $\AA^1$ by $\X_t$, where $t\in \AA^1$ and call the fibre $\X_0$ the \emph{central fibre}
    \item Let $h:\ZZ\rightarrow \QQ$ be a function, whose restriction to $\ZZ_{> k_0}$ for some positive number $k_0$ agrees with a polynomial. If we \emph{only} care about the asymptotics of $h(k)$ as $k$ tends to infinity, we will replace the \emph{function}, by its \emph{polynomial} and abuse notation by using the same symbol. So a Hilbert function becomes a Hilbert polynomial, a weight function becomes a weight polynomial and so on.
\end{itemize}

\section{Statements of selected results} 
\label{sec:summary_of_results}

Fix a smooth projective variety $B$ of dimension $b$ with an ample line bundle $L$. Let $E$ be an algebraic vector bundle of rank $r_E$ over $B$, $r$ a strictly increasing finite sequence of positive numbers and $\Flag_r(E)$ the bundle of $r$-flags of subspaces in $E^*$. Fix a partition $λ = (λ_1,\ldots,λ_l)$ with jumps given by $r$. Let $E^λ$ denote the vector bundle obtained from $E$ and the representation of $\GL(r_E,\CC)$ given by $λ$ and let $p$ be the projection from $\Flag_r(E)$ to $B$ and define the line bundle
\begin{equation}
    \shL_λ(A) = \shL_λ\otimes p^*A,
\end{equation}
on $\Flag_r(E)$, where $A$ is a line bundle on $B$ and $\shL_λ$ is the line bundle associated to the partition $λ$ (cf. Equation \eqref{eq:defLB}). We refer to Sections~ \ref{sec:schurdef}, \ref{sec:flag} and \ref{sec:relative_flag_varieties} for details.

We will often make the following assumption on our choice of partition.
\begin{definition}\label{def:diamond}
    We say that $λ$ and $r$ satisfy the assumption $\diamond$ if at least one of the following holds:
    \begin{enumerate}[(i)]
        \item the length $l(λ)$ of $λ$ is at most 4 (cf. page \pageref{sec:schurdef})
        \item $λ = t σ_{r_E,r}$ for some positive rational number $t$, where $σ_{r_E,r}$ is the canonical partition defined in Section~\ref{sec:chern}
    \end{enumerate}
\end{definition}
\begin{summarythm}{A}[Theorem \ref{thm:curve1}, Section \ref{sec:curve}]\label{summaryA}
    Let $C$ be a smooth projective curve of genus $g$, $E$ an ample vector bundle of rank $r_E$ on $C$ and $A$ an ample line bundle on $C$.
    \begin{itemize}
        \item If $E$ is slope polystable, then any polarised flag bundle $(\Flag_r(E),\shL_λ(A))$ admits a cscK metric. In particular $(\Flag_r(E),\shL_λ(A))$ is K-semistable.
        \item If $λ$ satisfies the assumption $\diamond$ and $E$ is slope unstable, then the flag variety $\Flag_r(E)$ of $r$-flags of quotients in $E$ with the polarisation $\shL_λ(A)$ is K-unstable. If $E$ is properly semistable, then the pair $(\Flag_r(E),\shL_λ(A))$ is properly K-semistable.
        \item Finally, if $E$ is simple, meaning that it has no nontrivial holomorphic automorphisms, and $g>1$, the YTD correspondence holds for any polarisation $\shL_λ(A)$ where $λ$ satisfies the assumption $\diamond$. In particular, $E$ is simple if it is stable.
    \end{itemize}
\end{summarythm}

\begin{summarythm}{B}[Theorem \ref{thm:B}, Section \ref{sec:anybase}]\label{summaryB}
    Let $E$, $B$ and $L$ be as in the beginning of the section. Assume that $r$ and $λ$ satisfy $\diamond$ and that $E$ is slope unstable. Then there exists an $m_0$ such that the flag variety $\Flag_r(E)$ of $r$-flags of quotients in $E$ with the polarisation $\shL_λ(L^m)$ is K-unstable for $m>m_0$.
\end{summarythm}

For $i$ between $1$ and $b$, define the cohomology class $B_i(E,λ)$ to be the Chow degree $i$ term in the expansion
\begin{equation}
    \ch E^λ = \rank E^λ (1 + B_1(E,λ) + B_2(E,λ) + \dotsb + B_b(E,λ))
\end{equation}
of the Chern character of $E^λ$.
\begin{summarythm}{C}[Theorem \ref{thm:maina}, Section~\ref{sec:chern}]\label{summaryC}
    Let $E$ be as in the beginning of the section and let $λ$ satisfy the assumption $\diamond$ for some $r_E$ and $r$, then
    \begin{equation}
        B_1(E,λ)= \frac{c_1(λ)}{r_E}c_1(E)
    \end{equation}
    and
    \begin{equation}\label{eq:STA}
        \begin{split}
                B_2(E,λ)&\equiv_1 \frac{h_2(λ)h_2(E)}{{r_E}({r_E}+1)} + \frac{c_2(λ)c_2(E)}{{r_E}({r_E}-1)} + H_λ A_2(E) + Z.
        \end{split}
    \end{equation}
    where $Z$ is independent of $λ$, and $h_i(λ)$ and $c_i(λ)$ denote the complete symmetric and elementary symmetric polynomials of $λ$, respectively. We denoted
    \begin{equation}
        A_2(E)=\frac{{r_E}-1}{2}\left(\frac{h_2(E)}{{r_E}({r_E}+1)}-\frac{c_2(E)}{{r_E}({r_E}-1)}\right)
    \end{equation}
    and
    \begin{equation}
        H_λ = \frac{r_Ec_1(λ) - \sum_i(2i-1)λ_i}{{r_E}-1}.
    \end{equation}
     The notation $\equiv_1$ means the following weak numerical equivalence: If $U$ and $V$ are $k$-cycles in $B$, then $U\equiv_1 V$ if $c_1(A)^{n-k}.(U-V)$ is the zero cycle for all line bundles $A\in \Pic B$. We also used $c_i(λ)$ and $h_i(λ)$ to denote the elementary and complete symmetric polynomials of degree $i$ for $λ$.
\end{summarythm}

\begin{summarythm}{D}[Theorem \ref{thm:grEx}, Chapter \ref{chap:sub}]\label{summaryD}
    Given any positive integers $p$ and $d$, there exist a K-unstable hypersurface of degree $d$ in a Grassmannian bundle of $p$-planes in a vector bundle on a smooth complex curve.
\end{summarythm}

In Chapter \ref{chap:filtrations} we define the notion of \emph{relative K-stability} and generalise a correspondence between filtrations and test configuration to this context \cite{szekelyhidi2011filtrations}. Let $p\colon Y\rightarrow B$ be a projective morphism and $\shL$ a relatively ample line bundle on $Y$. The definitions and the precise statements of the following two theorems is found in Chapter \ref{chap:filtrations}.
\begin{summarythm}{E}[Theorem \ref{thm:SCor}, Section \ref{sec:filtrations_and_k_stability}]\label{summaryE}
    There is a 1-1 correspondence between $p$-relative test configurations up to a natural identification and admissible finitely generated filtrations of the algebra $\bigoplus_{k=0}^\infty p_*\shL^k$.
\end{summarythm}
\begin{summarythm}{F}[Theorem \ref{thm:convTC}, Section~\ref{sec:operations_on_relative_test_configurations}]\label{summaryF}
    Without fixing a relatively ample line bundle, set of $p$-test configurations for $Y$ is, up to natural identifications, has a convex structure which fibres naturally over the cone of relatively ample polarisations. Moreover, the Donaldson-Futaki invariant is continuous in the variation of the convex combination.
\end{summarythm}
\begin{remark}
    The statements of Theorem E and Theorem F specialise to usual test configurations if we take $B$ to be a point.
\end{remark}
Theorem E and Theorem F immediately imply the following result, which we also believe to be new.
\begin{summarythm}{G}[Theorem \ref{thm:unstable_locus_is_open}]\label{summaryG}
    Let $X$ be a projective variety over the complex numbers. Then the locus of line bundles which are K-unstable is open in the cone of ample $\QQ$-line bundles with respect to the Euclidean topology.
\end{summarythm}
\chapter{Preliminaries}\label{chap:preliminaries} 
This chapter reviews preliminary material. We briefly review background on geometric invariant theory in Sections~\ref{sec:stab} and Sections~\ref{sec:review_of_stability}. Sections~\ref{sec:stability_of_vector_bundles} recalls the definition of Mumford stability of vector bundles and the Narasimhan-Seshadri extension of the Uniformisation Theorem to vector bundles. Sections~\ref{sec:schurdef}, Sections~\ref{sec:flag} and Sections~\ref{sec:relative_flag_varieties} review preliminaries on flag varieties and their relative counterpart, flag bundles.


\section{Group actions and linearisations}\label{sec:stab} 

In this section we recall basic notions of group actions on complex projective varieties \cite[Section 4.2]{Huybrechts2010}. In particular, we briefly describe the equivariant set-up for flag bundles and families of projective varieties over $\AA^1$, which we will use in later sections. Let $X$ be a complex projective scheme with a $G$-action, that is a regular map
\begin{equation}
    ρ:X\times G\rightarrow X
\end{equation}
The scheme $X$ together with the action $ρ$ is called a \emph{$G$-scheme}. This notion also extends to sheaves on $X$. Let $\shF$ be a coherent sheaf on $X$. A \emph{$G$-linearisation} of $\shF$ is an isomorphism of $\shO_{X\times G}$-sheaves $Φ: ρ^* \shF\ra p_1^*\shF$ satisfying the condition
\begin{equation}
    (\id_X\times μ)^* Φ = p_{12}^*Φ\circ(σ\times \id_G)^*Φ,
\end{equation}
where $p_{12}$ denotes the projection $p_{12}:X\times G\times G\ra X\times G$ onto the first two factors. A $G$-linearisation on $F$ induces an action on the schemes functorially constructed from $\shF$. A $G$-linearised sheaf is often referred to simply as a $G$-sheaf. If we assume that $\shF$ is locally free and denote the total space of $\shF$ by $F$, linearisations are equivalent to $G$-actions on $F$ whose projections $F\rightarrow X$ are equivariant and restrict to linear isomorphisms
\begin{equation}
    F_x \cong F_{ρ(x,g)}
\end{equation}
for all $(x,g)\in X\times G$. \emph{A polarised $G$-variety} $(X,L)$ is a $G$-variety $X$ with an ample line bundle with a $G$-linearisation.

The most important actions in the theory of K-stability are ones by the complex multiplicative group $\GG_m$.
\begin{example}[Actions of the multiplicative group on polarised varieties]\label{ex:Gm}
    Consider an action of the multiplicative group $\GG_m$ over $\kk$ on a projective variety $(X,L)$, where $L$ is a very ample line bundle. Let $R$ be the ring $H^0(X,\bigoplus_{k=0}^\infty L^k)$. Then the $\GG_m$-linearisation on the line bundle $L$ determines a representation of the group $\GG_m$ on the vector space $H^0(X,L^k)$ for all $k\geq 0$ by setting
    \begin{equation}\label{eq:GmRep}
        s.f(x) = f(s^{-1}x)
    \end{equation}
    for all $s\in\GG_m$, $x\in X$ and $f\in H^0(X,L^k)$. This determines a homomorphism
    \begin{equation}
        h\colon R\rightarrow R[s]
    \end{equation}
    by sending
    \begin{equation}
        f\mapsto s^{-w(f)}f
    \end{equation}
    for any $f$ which lies the space of \emph{weight} $-w(f)$ elements of the representation. If we extend this map linearly, it follows from Equation~\eqref{eq:GmRep} that the homomorphism $h$ preserves the grading on $R$. Conversely, any $\GG_m$-action on a very amply polarised complex scheme arises from a homomorphism $R\rightarrow R[s]$, where $R$ is a graded algebra.

    Another way to describe the map $h$ is by lifting the $\GG_m$-action to an action on the affine cone \cite{mumford1994geometric}
    \begin{equation}
        \Spec R\times\GG_m\rightarrow\Spec R,
    \end{equation}
    which by definition corresponds uniquely to a homomorphism
    \begin{equation}
        R\rightarrow R[s,s^{-1}].
    \end{equation}
\end{example}

\begin{lemma}\label{lem:induceG}
    Given a $G$-sheaf $\shF$, the Schur powers and shape algebras of the sheaf are $G$-sheaves. Moreover, if $\shA$ is a sheaf of $\shO_X$-algebras with a $G$-linearisation which respects the algebra structure, the relative Spec construction yields a $G$-scheme $Y$ such that the natural morphism $Y\rightarrow X$ is $G$-invariant. If $\shA$ is graded, the same statement is true for the relative $\shProj$ where the $\shO(1)$-line bundle comes with a natural linearisation of the action.
\end{lemma}
\begin{proof}
The Schur power part of the statement follows as tensor algebras of linearised sheaves have natural induced linearisations. We refer to \cite[pp. 94-95]{Huybrechts2010} for the remaining statements whose proofs are straightforward verifications.
\end{proof}


\section{Geometric invariant theory}\label{sec:review_of_stability} 
We review aspects of Mumford's geometric invariant theory (GIT). The books \cite{mumford1994geometric} and \cite{mumford1977stability} have been an invaluable reference, and contain the germs of many ideas contained in this work and in the theory K-stability at large.

The idea of stability appears when one attempts to form quotients in the category of quasi-projective varieties. Mumford realised that given an action of an algebraic group $G$ on a polarised variety $(X,L)$, there is a $G$-invariant open subset $X_s$ of \emph{stable locus} such that the orbit set $X_s/G$ can be given a natural structure of a quasiprojective variety. Moreover, the Zariski closure of $X_s/G$ can be naturally identified with a quotient of a larger set $X_{ss}$ of \emph{semistable locus} by $G$. This construction is called the \emph{GIT quotient} of $X$ by $G$ and it depends on a choice of $G$-linearisation on the line bundle $L$.

We begin with the definition of stability for linear representations. Suppose $G$ is a complex algebraic group with a linear representation $V$. We say that a point $p\in V$ is
\begin{itemize}
    \item \emph{stable} if $0\not\in \overline{G.p}$ and $\mathrm{Stab}_G(p)$ is finite,
    \item \emph{semistable} if $0\not \in \overline{G.p}$ and
    \item \emph{unstable} if $0\in\overline{G.p}$.
\end{itemize}
For any $x\in \PP V$, we say that $x$ is stable, semistable or unstable if some (and hence each) nonzero lift of $x$ to $V$ is.

There is an induced action of $G$ on the vector spaces $H^0(X,L^k)$ for all $k\in\NN$ given by
\begin{equation}
    (g.s)(p)=s(g^{-1}p)
\end{equation}
for $s\in H^0(X,L^k)$ and $p\in X$.

\begin{definition}\label{def:GITstab}
        Let $x$ be a point in a scheme $X$ with an ample line bundle $L$.
        \begin{itemize}
            \item $x$ is stable (with respect to a chosen linearisation) if there is an invariant section $s\in H^0(X,L^k)$ for some $k\in\NN$ such that the open set $U_s=\{x: s(x)\neq 0\}$ is affine and invariant, and the orbits of closed points in $U_s$ are closed.
            \item $x$ is polystable if there is an invariant section $s\in H^0(X,L^k)$ for some $k\in\NN$ such that the open set $U_s=\{x: s(x)\neq 0\}$ is affine and invariant, and the orbits of closed points in $U_s$ are closed in the semistable locus,
            \item $x$ is semi-stable if there is an invariant section $s\in H^0(X,L^k)$ for some $k\in\NN$ such that the open set $U_s=\{x: s(x)\neq 0\}$ is affine and invariant and
            \item $x$ is unstable otherwise.
        \end{itemize}
\end{definition}

\paragraph{One parameter subgroups and the Hilbert-Mumford criterion}
Mumford discovered a powerful criterion for determining whether a point is stable in the sense of Definition \ref{def:GITstab}. A \emph{one parameter subgroup} (1-PS) of a complex algebraic group $G$ is a homomorphism $χ:\GG_m\rightarrow G$. Assume that $G$ acts on $X$ and that $ρ:X\times G\rightarrow X$ is proper. Given a point $x\in X$, one parameter subgroup $χ$ determines a morphism
\begin{equation}
    f\colon\AA^1\rightarrow X
\end{equation}
which maps $x$ to a point in the closure of the orbit of $χ$. Then the induced $\GG_m$-linearisation of the action $ρ\circ χ$ on $L$ restricts to a character of $\GG_m$ on the complex line $\restr{f^* L}{\{0\}}$. Let $χ(t)=t^r$ be this character and define the integer
\begin{equation}
    μ^L(x,χ) = -r.
\end{equation}
\begin{proposition}[The Hilbert-Mumford criterion]
    Let $X,L,G$ and $ρ$ be as above and $x$ a point in $X$. Then
    \begin{itemize}
        \item $x$ is stable if and only if $μ^L(x,χ)>0$ for all 1-PS $χ$.
        \item $x$ is semistable if and only if $μ^L(x,χ)\geq 0$ for all 1-PS $χ$
        \item $x$ is unstable otherwise.
    \end{itemize}
\end{proposition}
\begin{remark}[Stability of varieties]
    The Hilbert scheme and the Chow scheme are two constructions, which are powerful tools in the study of families of projective varieties. They enable us to identify a projective scheme $(X,L)$ with a fixed embedding $\PP(H^0(X,L^r)^*)$ as a point in a parameter scheme. The choice of basis on $\PP(H^0(X,L^r)^*)$ implies a natural GIT problem for Hilbert and Chow stability, whose solution ultimately depends on understanding the Hilbert-Mumford criterion on certain Grassmannians into which both the Hilbert scheme and the Chow scheme are embedded.

     The stability of $(X,L)$, in either the Hilbert scheme or the Chow scheme, depends on the parameter $r$. Mumford suggested study of asymptotic stability, or whether there exists an $r_0$ such that $(X,L)$ is stable for $r>r_0$. Mabuchi proved the equivalence of asymptotic Hilbert stability and asymptotic Chow stability in \cite{Mabuchi2008}. K-stability, which will be defined in Chapter \ref{chap:stability}, is a minor modification on the Hilbert-Mumford criterion for asymptotic stability of $(X,L)$.
\end{remark}

\section{Stability of vector bundles} \label{sec:stability_of_vector_bundles}
Let $\shE$ be a coherent sheaf of rank ${r_E}$ on a smooth projective variety $B$. Define the \emph{determinant} of $\shE$ by
\begin{equation}
    \det \shE = (\ex^e \shE)^{**}.
\end{equation}
The define \emph{first Chern class} by $c_1(\det \shE)$, and the degree and the slope of $\shE$ by
\begin{equation}
\deg \shE=\int_X c_1(\shE).c_1(L)^{n-1}.
\end{equation}
and
\begin{equation}
    μ_E=\deg \shE/\rank \shE,
\end{equation}
respectively. If $\shE$ is locally free in a subset $U\subset B$ whose complement is contained in a codimension 2 subscheme, we say that $\shE$ is \emph{locally free in codimension 2}. In this case the first Chern class of $\shE$ can defined to be the pushforward
\begin{equation}
    c_1(\shE) = (i_U)_* c_1(\restr{\shE}{U}),
\end{equation}
where
\begin{equation}
    i:U\rightarrow B
\end{equation}
is the inclusion.

Let $\operatorname{TF}(\shE)$ denote the set of torsion free subsheaves $\shF$ of $\shE$ with $0 < \rank \shF < \rank \shE$ \cite{kobayashi2014differential}.
\begin{definition}[Mumford-Takemoto slope stability {\cite[Definition 1.2.12]{Huybrechts2010}}]\label{def:slopestab}
 Let $E$ be a vector bundle. We say that $E$ is
    \begin{itemize}
        \item \emph{slope stable} if $μ_\shF<μ_E$ for all $\shF\in \operatorname{TF}(E)$
        \item \emph{slope polystable} if $μ_\shF\leq μ_E$ for all $\shF\in \operatorname{TF}(E)$ and in the case of equality, $E$ is a direct sum $\shF\oplus \shQ$ with $μ_\shF = μ_\shQ$,
        \item \emph{slope semistable} if $μ_\shF\leq μ_E$ for all $\shF\in \operatorname{TF}(E)$ and
        \item \emph{slope unstable} otherwise.
    \end{itemize}
    A torsion free subsheaf $\shF$ with $μ_\shF > μ_E$ is called a \emph{destabilising} subsheaf.
\end{definition}

The following generalisation of the Uniformisation theorem holds for polystable vector bundles on Riemann surfaces.
\begin{proposition}[{\cite[Theorem 2.7]{kobayashi2014differential}}]\label{prop:ns}
    A vector bundle $E$ of rank ${r_E}$ on a Riemann surface $Σ$ is slope polystable if and only if it admits a projectively flat structure, that is the associated $\PGL(\CC,\rank E)$-bundle $\EE$ is \emph{flat}, meaning that it arises from a representation
    \begin{equation}
        ρ:π_1(Σ)\rightarrow \PGL(r_E,\CC)
    \end{equation}
    of the fundamental group $π_1(Σ)$ of $Σ$ as the quotient
    \begin{equation}
        \EE = \widetilde{Σ}\times_ρ  \PGL(r_E,\CC).
    \end{equation}
\end{proposition}

\begin{remark}[The Hitchin-Kobayashi correspondence]
    If $E$ is a vector bundle and $h$ is a Hermitian metric with curvature $F_h$, we say that $h$ is Hermitian-Einstein if it satisfies
    \begin{equation}
        \sqrt{-1} Λ_ω F_h = μ_E \id_E,
    \end{equation}
    where $Λ_ω$ is the dual of the Lefschetz operator \cite[pp. 114-115]{huybrechts2006complex}.

    The YTD correspondence is closely related to a result which relates the existence special connections on vector bundles to Mumford stability. This is called the Hitchin-Kobayashi correspondence proved by Narasimhan-Seshadri \cite{Narasimhan1965}, Donaldson \cite{donaldson1987infinite} and Uhlenbeck-Yau \cite{uhlenbeck1986existence}. It states that a Hermitian vector bundle $E$ on a projective manifold $(M,L)$ admits a Hermitian-Einstein metric if and only if it is Mumford stable.
\end{remark}
%


\section{Schur functors}\label{sec:schurdef} 
We define Schur functors using the classical formulation in terms of Young symmetrisers. Let $A$ be a finitely generated $\QQ$-algebra and $M$ is a finite $A$-module of dimension $d_M$.

A \emph{partition} $λ=(λ_1,\ldots,λ_l)$ is a finite nonincreasing sequence of natural numbers. Define the \emph{length} $l(λ) = l$ and the area $|λ|=\sum_{i=1}^lλ_i$ of $λ$. Also define the natural operations on partitions. Let $λ$ and $μ$ be partitions of equal length and let $k$ and $n$ be a natural numbers. Define
\begin{itemize}
    \item the componentwise sum $λ + μ$,
    \item the componentwise product $λμ$,
    \item the sum and product with a natural number, understood to be a constant partition of the correct length, and
    \item repeated indices $(k^n) \defeq  (\underbrace{k,\dotsc,k}_n)$.
\end{itemize}

A partition $λ$ is uniquely represented by a Young diagram $D_λ$ consisting of $λ_i$ boxes in the $i$th row. Define the conjugate partition of $λ$ to be the partition $λ'$ represented by the Young diagram obtained from $D_λ$ via reflection in the diagonal axis of reflection starting from the top left corner. In other words, the difference between $D_λ$ and $D_{λ'}$ is that the roles of rows and columns are reversed.

\begin{figure}[h]
    \vspace{0.5cm}
    \begin{center}
    $\yng(4,2,1)\qquad \mapsto \qquad \yng(3,2,1,1)$
    \caption*{Example of conjugating the partition $λ=(4,2,1)$ by a reflection of its Young diagram. }
    \end{center}
\end{figure}

\begin{definition}\label{def:schur}
    Let $A$ be a ring containing $\QQ$, let $λ$ be a partition such that $d=|λ|$ and denote $I=(i_1,\ldots,i_d)$. Consider the $d$th tensor power of $M$ and let $m_{i_1},\ldots,m_{i_d}$ be elements of $M$. Denote $m_I=m_{i_1}\otimes\dotsm\otimes m_{i_{|λ|}}$ and define map
    \begin{equation}
        c_λ:m_I\mapsto \frac{1}{d_λ}    \sum_{σ,τ} (\sgn τ) m_{σ\circ τ(I)},
    \end{equation}
    called the \emph{Young symmetriser}. The rational number $d_λ$ is chosen so that $c_λ$ is idempotent. This requirement fixes $d_λ$ uniquely. Explicitly, we have $d_λ = d_M!/\dim M^λ$.

     The summation is taken over all $σ$ ($τ$, respectively) which preserve the rows (columns) of the diagram. Define the \emph{Schur power $M^λ$} of $M$ associated to the partition $λ$ by
    \begin{equation}
        M^λ =c_λ\left(M^{\ot|λ|}\right).
    \end{equation}
\end{definition}

\begin{remark}
    The proof that the rational number $d_λ$ exists can be found in \cite[Theorem 4.3]{FultonRep}.
\end{remark}

\begin{lemma}\label{lem:funct}
    The Schur power construction is a functor from the category of $A$-modules to itself and it commutes with change of base. We use the term \emph{Schur functor} synonymously with the term Schur power.
\end{lemma}
\begin{proof}
    Let $M$ and $N$ be $A$-modules and let $f\colon M\rightarrow N$ be a homomorphism. Then the natural homomorphism $f^λ$ defined as restriction of
    \begin{equation}
        m_1\otimes \dotsm \otimes m_d\mapsto f(m_1)\otimes \dotsb \otimes f(m_d)
    \end{equation}
    is well defined as a map $M^λ\ra N^λ$. It is clear that this construction respects identity and composition.

    Tensor powers commute with base change so the same is true for Schur powers.
\end{proof}
In particular, Schur powers are therefore defined on the category of coherent sheaves on schemes.
\begin{definition}\label{def:schursheaf}
    Given a quasicoherent sheaf $\shF$ and a partition $λ$, we define the Schur power $\shF^λ$ to be the quasicoherent sheaf locally obtained by Definition \ref{def:schur}.
\end{definition}
To be more explicit, let $\{U_α\}$ an open affine cover of $B$ such that $\restr{\shF}{U_α}$ is the quasicoherent sheaf corresponding to a $\shO_B(U_α)$-module. We define $\shF^λ$, the Schur power of $\shF$ for the partition $λ$, by its restrictions to $\restr{\shF^λ}{U_α}$. The transition maps are induced by localisation and functoriality. Denote the Schur power of $\shF$ by $\shF^λ$.

\begin{definition}\label{def:rseq}
    If $r$ is a finite increasing sequence of natural numbers and $λ$ is a partition, we say that \emph{the jumps of $λ$ are given by $r$} if $λ_i>λ_{i+1}$ precisely at indices $i$ belonging to $r$ with the additional requirement that $λ_e$ is zero for some integer ${r_E}$. Later, the integer ${r_E}$ will be taken to be the dimension of a fixed vector space or the rank $r_E$ of a fixed vector bundle $E$. Denote the set of such partitions by $\shP(r)$.
\end{definition}

The following algebra is at the centre of a relationship between geometry, algebra and representation theory that we make use of in later chapters.
\begin{definition}[Algebra structure \cite{towber1977two}]\label{def:shapealgebra}
    Given an $A$-module $M$ we define the \emph{universal shape algebra}
    \begin{equation}\label{eq:shapealgebra}
        \mathbb{S}(M) = \bigoplus_λ M^λ
    \end{equation}
    where the summation is over all partitions $λ$ and the ring structure is defined by the projection
    \begin{equation}
        m_λ\ot m_{μ} \mapsto d_{λ+μ}^{-1}c_{λ+μ} (m_λ\ot m_μ).
    \end{equation}
    for any $m_λ\in M^λ$ and $m_μ\in M^μ$.

    We also define two natural subalgebras of $\mathbb{S}(M)$. Given any partition $λ$, we define the $\ZZ$-graded subalgebra
    \begin{equation}
        S_λ(M) = \bigoplus_{k=0}M^{kλ} = A\oplus M^λ \oplus M^{2λ}\oplus\dotsb
    \end{equation}
    called the \emph{shape algebra} of $M$ for the partition $λ$. In the case $λ=(k)$ we simply write
    \begin{equation}
        S_{(k)}(M)  = S(M)
    \end{equation}
    for the symmetric algebra of $M$. Given a finite strictly increasing sequence of natural numbers $r$, we define the $\ZZ^c$-graded subalgebra
    \begin{equation}
                \mathbb{S}_r(M) = \bigoplus_{ν\in \shP(r)} M^{ν}
    \end{equation}
    called the \emph{total coordinate ring of the scheme of $r$-flags in $M$}.

    The terminology is justified in Section~\ref{sec:flag} and Section~\ref{sec:relative_flag_varieties}.
\end{definition}
\begin{proposition}
    The algebras $\mathbb{S}(M)$, $S_λ(M)$ and $\mathbb{S}_r(M)$ are associative and commutative $A$-algebras. The algebra $S_λ(M)$ is finitely generated as an $A$-algebra.
\end{proposition}
\begin{proof}
    Associativity and commutativity follow directly from the properties of the Young symmetriser. Finite generation is clear since $S_λ(M)$ is generated in degree one.
\end{proof}

\begin{example}[Examples of Schur functors in the category of coherent sheaves]\label{ex:schurpows}
    Let $\shE$ be a coherent sheaf on an integral scheme $B$. Define the \emph{rank} $e=\rank \shE$ of $\shE$ to be the dimension of the fibre of $\shE$ over the generic point of $B$ \cite[p. 74]{hartshorne1977algebraic}. We define the \emph{determinant} of $\shE$ by
    \begin{equation}
        \det \shE = \shE^{(1^{r_E})},
    \end{equation}
    which we also denote by $\ex^e \shE$, and the \emph{symmetric power} of $\shE$ by
    \begin{equation}
        S^k \shE = \shE^{(k)}.
    \end{equation}
\end{example}

\begin{remark}[Schur functors for vector bundles]\label{rem:schurvb}

If $E$ is a locally free sheaf, then there is a convenient description of the Schur power. Let $\EE$ be the frame bundle of the vector bundle corresponding to $E$ with fibre $\GL(V)$, where $V$ is a dimension $\rank E$ complex vector space. Then we may define $E^λ$ to be the sheaf of sections of the vector bundle
\begin{equation}
    \EE\times_G V^λ.
\end{equation}
\end{remark}
\begin{remark}
    Either from Remark \ref{rem:schurvb} or from the definition of a Schur functor, we see that $E^λ\ot L^{c_1(λ)} = (E\ot L)^λ$, where $c_1(λ)$ is the sum $\sum_{i=1}^l λ_i$.
\end{remark}

The following proposition will be important for applying the standard constructions of algebraic geometry to shape algebras.
\begin{proposition}[Positivity of Schur powers \cite{Hartshorne1966}]\label{prop:schurpos}
    If the vector bundle $E$ is ample, then the Schur power $E^λ$ is ample for any partition $λ$.
\end{proposition}


\section{Flag varieties}\label{sec:flag}
In this section we present a short introduction to classical flag varieties and the Borel-Weil theorem for the general linear group, which relates the space of sections of an equivariant line bundle on a flag variety to a representation of the general linear group. Our main reference is Weyman's book, but we use a dual convention for partitions \cite[Chapters 2 and 3]{Weyman}.

Given a vector $(r_1,\ldots, r_c)$ of strictly increasing integers, we define an \emph{$r$-flag} of quotients of a vector space $V$ to be a sequence
\begin{equation}
    V\rightarrow V_c\rightarrow V_{c-1}\rightarrow\dotsb\rightarrow V_1\rightarrow 0
\end{equation}
of successive quotients where $\dim V_i =r_i$ which we assume not to be injective for all $i$. Dually, this corresponds to a sequence
\begin{equation}
    0\subset V_1^*\subset\dotsm\subset V_c^*\subset V^*
\end{equation}
of nested subspaces. We make the assumption that the largest element of $r$ is smaller than $\dim V$ from now on without further mention.

Let $G = \GL(e,\CC)$ and consider the subgroup of matrices of the form
\begin{equation}
    \begin{pmatrix}
    B_1 & * & * & \cdots & * \\
    0 & B_2 & * & \cdots & *\\
    \vdots & \vdots & \vdots & \vdots & \vdots \\
    0 & 0 & 0 & B_{c} & *\\
    0 & 0 & 0 & 0 & B_{c+1}
    \end{pmatrix},
\end{equation}
where $B_i$ is in $\GL(r_i-r_{i-1},\CC)$ and the entries marked with $*$ are arbitrary. Matrices of this form is the isotropy subgroup $P_r\subset G$ of a flag of coordinate subspaces
\begin{equation}\label{eq:flag1}
    0 = \langle{e_1,\ldots ,e_{r_1}}\rangle \subset \langle{e_1,\ldots, e_{r_2}}\rangle\subset\dotsm\subset \langle{e_1,\ldots, e_{r_c}}\rangle\subset \CC^e.
\end{equation}

Let $V$ be a vector space of dimension ${r_E}$ and $r$ a properly increasing sequence of positive integers. A \emph{classical flag variety} $\Flag_r(V)$ is the set of all possible nested subspaces
\begin{equation}\label{eq:incidence}
    0 = V_{r_{1}} \subset V_{r_{2}}\subset\dotsm\subset V_{r_c}\subset V_{r_{c+1}}= V^*
\end{equation}
where $\dim V_{r_j}=r_j$ for all $j$. The set of flags of this type has the structure of a homogeneous space $G/P_r$ where $P_r$ is the stabiliser of the flag in Equation~\eqref{eq:flag1}.

\begin{remark}
    There is a 1-1 correspondence between quotients and subspaces of the complementary dimension. Dualising $V$ in Equation~\eqref{eq:incidence} corresponds to working with quotients of $V$ instead of subspaces.
\end{remark}

The Pl{\"u}cker embedding, which sends each plane spanned by vectors $v_1,\ldots,v_{r_j}\in V^*$ to the point $[v_1\wedge\dotsb\wedge v_{r_j}]\in\PP(Λ^{r_j}V)$, determines an embedding from the flag variety $\Flag_r(V)$ to the product of projective spaces
\begin{equation}
    \PP=\PP(\Lambda^{r_1}V)\times\dotsb\times\PP(\Lambda^{r_c}V).
\end{equation}
The image is cut out by incidence relations determined by Equation~\eqref{eq:incidence} and quadratic relations on each of the factors $\PP(\Lambda^{r_j}V)$. The coordinate ring of $\Flag_r(V)$ can be beautifully written in terms of Schur functors as follows.
\begin{proposition}[{\cite[Proposition 3.1.9]{Weyman}}]
    Equip the coordinate ring of $\PP$ with its standard $\NN^c$-grading. Then the $(s_1,\ldots,s_c)$-component of the multigraded coordinate ring $\CC[\Flag_r(V)]$ is isomorphic to the Schur module $V^{λ}$, where the conjugate of $λ$ satisfies
    \begin{equation}\label{eq:part1}
        λ' = (r_c^{s_c},\ldots,r_1^{s_1}).
    \end{equation}
\end{proposition}
Another way to write this proposition is by using the Borel-Weil theorem, which we state in the case of an ample line bundle on a flag variety of the general linear group. Let $λ$ be a partition of length $l<e$. Then we can define a subgroup $P_r$ of $G$ by letting $r$ be the set of indices $i$ such that $λ_i<λ_{i+1}$. Define the line bundle $\shL_λ$ by
\begin{equation}\label{eq:plucker}
    \shL_λ =  p^*_1 \shO_{\PP(\Lambda^{r_1}V)}(s_1)\otimes \dotsb\otimes p^*_c \shO_{\PP(\Lambda^{r_c}V)}(s_c),
\end{equation}
where the $s_i$ are determined by the requirement $λ'=(r_1^{s_1},\ldots,r_c^{s_c})$. Then the classical Borel-Weil theorem \cite[Théorème 4.]{serre1954representations}, \cite[Proposition 10.2]{bott1957homogeneous} implies that
\begin{equation}
    H^0(\Flag_r(V),\shL_λ) = V^λ
\end{equation}
for $s_i> 0$.

\begin{remark}\label{rem:SMult}
    A basic fact is that the tensor product of two line bundles $\shL_λ$ and $\shL_μ$ indexed by partitions is given by
    \begin{equation}
        \shL_λ\ot\shL_μ=\shL_{λ+μ}.
    \end{equation}
    Note that only globally generated line bundles can be written using partitions. Formally, it is common to denote the dual of a line bundle $\shL_λ$ by $\shL_{-λ}$ (cf. the proof of Proposition \ref{prop:gentype}).
\end{remark}

\section{Flag bundles and the Borel-Weil Theorem}\label{sec:relative_flag_varieties} 

Let $B$ be a projective scheme and let $E$ be a vector bundle of rank ${r_E}$ on $B$.

\begin{definition}
    Let $G$ be a group. \emph{A (Zariski locally trivial) principal $G$-bundle} over $B$ is a morphism $p\colon Y\rightarrow B$ such that
\begin{itemize}
    \item $Y$ is equipped with a $G$-action under which $p$ an invariant map, and
    \item there exists a Zariski open cover $\{U_i\}_{i\in I}$ with an isomorphism $t_i p^{-1}U_i \cong  G\times U_i$ for all $i\in I$ such that $G$ acts by left translation on itself and trivially on $U_i$.
\end{itemize}
\end{definition}

Let $\EE$ be the \emph{frame bundle} of $E$ constructed as follows. Let $U_1,\dotsc, U_N$ be open subsets of $B$ such that
\begin{equation}
    \bigcup_{i=1}^N U_i = B
\end{equation}
and $\restr{E}{U_i}\cong U_i\times \CC^e$ and define $\EE$ to be the principal $\GL(r_E,\CC)$-bundle obtained from the collection $U\times \GL(r_E,\CC)$ with the same transition functions as $E$. The natural $\GL(r_E,\CC)$-action on $\EE$ is algebraic.

Define the \emph{relative flag variety} or \emph{flag bundle} $\Flag_r(E)$ to be the quotient $\EE/P_r$ and let $p_r:\Flag_r(E)\ra B$ be the projection. We often refer to $\Flag_r(E)$ as simply the flag variety of $E$ of $r$-quotients.

There is a sequence of tautological vector bundles
\begin{equation}
    0=\shR_{0}\subset \shR_1\subset \dotsb \subset \shR_c \subset \shR_{c+1} = p_r^*E^*,
\end{equation}
on $\Flag_r(E)$, where $\rank\shR_i=r_{t-i}$. The fibre of $\shR_i$ at $x\in\Flag_r(E)$ is the $r_i$-plane in $E^*$ determined by $x$.

Define the line bundle $\shL_λ$ on $\Flag_r(E)$ to be the pullback of the $Π_{i=1}^cp_i^*\shO(s_i)$ line bundle on
\begin{equation}\label{eq:defLB}
    \Flag_r(E)\hookrightarrow\PP(\Lambda^{r_1}E)\times\dotsb\times\PP(\Lambda^{r_c}E),
\end{equation}
which can also be written as the line bundle
\begin{equation}
    (\det \shR_1)^{s_1}\otimes\dotsm\otimes(\det\shR_c)^{s_c}
\end{equation}
with the same relationship between the $s_i$ and $λ$ as in Equation \ref{eq:plucker}.

The Borel-Weil-Bott theorem computes the cohomology of vector bundles which can be written as tensor products of Schur powers of the successive quotients $\shR_i/\shR_{i-1}$ for $i=1,\ldots,t+1$, with the vector bundle $\shR_{t+1}$ understood to be $p^*E$. We state the theorem for line bundles $\shL_λ$, where $λ$ is a partition whose jumps are given by $r$.
\begin{proposition}[{\cite[Theorem 4.1.4)]{Weyman}}]\label{prop:pushforward}
Let $λ$ be a partition in $\shP(r)$, and $r$ and $s$ are as above. In other words, $λ_{i}>λ_{i-1}$ if and only if the index $i$ is contained in $r$, in which case $λ_i-λ_{i-1} = s_{t-i}$. The derived pushforwards of $\shL_λ$ satisfy
\begin{equation}\label{eqbbw}
    p_* \shL_λ = E^{λ},
\end{equation}
and
\begin{equation}
    R^i p_* \shL_λ = 0,
\end{equation}
for $i>0$.
\end{proposition}
\begin{remark}
    Since the conjugate of a partition $λ\in\shP(r)$ can be written as $\left(r_c^{s_c},r_{c-1}^{s_{c-1}},\ldots,r_1^{s_1}\right)$ for some positive integers $s_1,\ldots,s_c$, we have
    \begin{equation}
        λ=\left({S_c}^{r_1},S_{c-1}^{r_2-r_1},\ldots,S_1^{r_c-r_{c-1}}  \right),
    \end{equation}
    where $S_n = \sum_{i=1}^n s_i$.
\end{remark}

\begin{remark}[The coordinate algebra of a flag bundle]\label{rem:universal}
     The shape algebra of a vector bundle is defined by functoriality and Definition \ref{def:shapealgebra}. An explicit description of the generators and relations of the sheaf of shape algebras locally shows that it is isomorphic to the sheaf of algebras determined by the Pl\"{u}cker embedding.  In other words the flag bundle $\Flag_r(E)$ is isomorphic to a relative projectivisation
     \begin{equation}
        \shProj_B S_λ(E)
     \end{equation}
     of the shape algebra. The consequence of this is that the algebra $\mathbb{S}(E)$ is the relative analogue of a total coordinate ring.

     We are not aware of a reference for the above statements, but it follows from the local statement \cite[Chapter 9]{Fulton1997}. If $\Flag_r(E_p)$ is a flag fibre over a point $p\in B$, then the equations of $\Flag_r(E_p)$ inside $\PP(E_p^λ)$ extend to a neighborhood of $p$ where $E$ is a trivial vector bundle. By taking the sheaf of ideals generated locally in this way we get the relations of $S_λ(E)$ inside $S^k(E^λ)$.
\end{remark}
Viewing a flag bundle as a relative projectivisation of a shape algebra implies a natural generalisation to arbitrary coherent $\shO_B$-modules.
\begin{definition}\label{def:flagscheme}
     If $\shE$ is a coherent $\shO_B$-module, we define the \emph{relative scheme of $r$-flags} (or \emph{relative flag scheme})
    \begin{equation}
        \Flag_r(\shE) = \shProj_B S_λ(\shE).
    \end{equation}
    It is naturally endowed with a relatively ample line bundle, determined by the pair $(\shE,λ)$, which we also denote by $\shL_λ$. We refer to this line bundle simply the \emph{Serre line bundle} on $\Flag_r(\shE)$ if $λ$ is clear from context.
\end{definition}

The statement of the following Lemma holds more generally \cite[Proposition II.7.10]{hartshorne1977algebraic}, but we prove a special case to spell out the relationship between the line bundle $\shL_λ$ and the projective embeddings of $\Flag_r(E)$.
\begin{lemma}\label{lem:m}
    Let $E$ be a vector bundle of $\shO_B$-algebras and let $S_λ(E)$ be a shape algebra for the partition $λ$ and let $p$ be the projection $\Flag_r(E)\rightarrow B$. There exists an $m_0$ such that the line bundle $\shL_λ(L^m)$ is ample for $m\gg 0$. Morever, if $E$ itself is ample, then $\shL_λ$ is ample.
\end{lemma}
\begin{proof}
    Assume that $E$ is a vector bundle on $B$. For any $k>0$ and $m>kc_1(λ)$ we have a natural isomorphism
     \begin{equation}
        \left(\Flag_r(E),\shL_λ(L^m)\right)\cong (\Flag_r(E\otimes L^k),\shL_λ(L)(L^{m-kc_1(λ)})).
    \end{equation}
 The vector bundles $\ex^{r_i}(E\otimes L^k)$ are ample for all $i=1,\ldots,c$ by \cite[Corollary 5.3]{Hartshorne1966}, so the hyperplane bundles on $\PP(\ex^{r_i}(E\otimes L^k))$ are ample for $i=1,\ldots, c$. We can regard the pair $\left(\Flag_r(E),\shL_λ(L^m)\right)$ as a subvariety in the product
    \begin{equation}
        \PP=\PP(\ex^{r_1}(E\otimes L^k))\times\cdots\times\PP(\ex^{r_s}(E\otimes L^k)),
    \end{equation}
    where the line bundle $\shL_λ(L^m)$ is the restriction of
    \begin{equation}
        \shO_\PP(s_1,\ldots,s_c)\ot p^*L^m
    \end{equation}
    which is ample. The map $p$ is the projection $p\colon \PP\rightarrow B$. The second claim follows from the same proof with $m=0$.
\end{proof}

\begin{lemma}\label{lem:FPic}
    The Picard group of a flag bundle $\Flag_r(E)$ is generated by line bundles of the form $\shL_λ(A)$, where $A$ is a line bundle on $B$ and the partition $λ$ is in $\shP(r)$.
\end{lemma}
\begin{proof}
    This proof goes along the same lines as \cite[Proposition 4.1.3]{Weyman}.
\end{proof}

Lemma \ref{lem:induceG} applies to flag bundles of $G$-linearised vector bundles.
\begin{proposition}\label{prop:FLin}
    Let $\shE$ be a $G$-linearised coherent $\shO_B$-module of rank ${r_E}$ on a $G$-variety $B$ and let $λ$ be a partition. Then the \emph{affine relative flag scheme}
        \begin{equation}
            \mathop{\shS \it{pec}}\nolimits_X S_λ(\shF)
        \end{equation}
        and the relative flag scheme
        \begin{equation}
            \shProj_X S_λ(\shF)
        \end{equation}
        are $G$-schemes. The relatively ample line bundle $\shL_λ$ comes with a natural $G$-linearisation.
\end{proposition}
\begin{proof}
    The diagram
    \[
    \begin{tikzcd}
    E_x^λ\otimes E_x^μ \arrow{r} \arrow[swap]{d} & E_x^{λ+μ} \arrow{d} \\
    E_{ρ(x,g)}^λ\otimes E_{ρ(x,g)}^μ \arrow{r} & E_{ρ(x,g)}^{λ+μ}
    \end{tikzcd}
    \]
    clearly commutes so the algebra $S_λ(E)$ is a sheaf of $G$-algebras with a linearization that preserves the grading. Hence, Lemma \ref{lem:induceG} implies that the scheme $(\shProj,\shL_λ)$ has a $p$-invariant $\GG_m$-action.
\end{proof}

\begin{remark}[The functorial definition of flag schemes]
    One may also define an object we call the \emph{flag-quot scheme} $\shDrap(r,\shE)$, which represents a functor from the category of schemes to the category of sets defined by
    \begin{equation}
        T\mapsto \left\{
        \begin{aligned}
            &\qquad\text{locally free quotients }\\
            &\shO_T\ot \shE \rightarrow \shQ_1\rightarrow\dotsb\rightarrow\shQ_c \rightarrow 0 \\
            & \text{on } B\times T  \text{ with ranks given by }  r.
        \end{aligned}
        \right\}
    \end{equation}
    We believe the scheme $\Flag_r(\shE)$ is isomorphic to $\shDrap(r,\shE)$.
\end{remark}


\chapter{A review of K-stability}\label{chap:stability}
This chapter reviews the preliminaries for the study of K-stability. In Section~\ref{sec:the_definition_of_k_stability} we define K-stability following Donaldson \cite{Donaldson2002} with a refinement due to Li-Xu, Stoppa and Székelyhidi \cite{li2011special,stoppa2011note,szekelyhidi2011filtrations}. In Section~\ref{sec:testprop} we give a self-contained introduction to test configurations with the aim of providing background for Chapter \ref{chap:filtrations}. Eisenbud's book \cite{eisenbud1995commutative} was a valuable reference for Section \ref{sec:testprop}.

\section{K-stability} 
K-stability is given in terms of the following abstraction of the Hilbert-Mumford criterion defined in Section~\ref{sec:review_of_stability}.
\label{sec:the_definition_of_k_stability}
\begin{definition}\label{def:TCdef}\cite[Definition 2.1.1]{Donaldson2002}
    Let $X$ be a smooth projective variety with an ample polarisation $L$. A \emph{test configuration} for the polarised variety $(X,L)$ is given by the following data:
    \begin{itemize}
        \item a flat morphism $π:\X\rightarrow\AA^1$ of schemes together with an isomorphism $π^{-1}\{1\}\cong X$,
        \item an $f$-ample line bundle $\scrL$  on $\X$ such that the isomorphism given above lifts to an isomorphism between $\restr{\scrL}{\X_1}\cong L^r$ for some positive integer $r$, where $\X_1$ denotes the fibre $π^{-1}\{1\}$, and
        \item an $\scrL$-linearised action $\rho:\GG_m\times\X\rightarrow\X$ on $\X$ that covers the usual action on $\AA^1$.
    \end{itemize}
    The integer $r$ is called the \emph{exponent} of the test configuration. The fibre $f^{-1}\{0\}$ is called the central fibre.
\end{definition}
\begin{remark}
    We will often refer to a test configuration simply by the scheme $\X$ if the rest of the triple $(\X,\scrL,\rho)$ is either irrelevant to the discussion or clear from the context.
\end{remark}
\begin{definition}
    Let $(X,L)$ be a polarised $\GG_m$-variety where the action is denoted by $α$. Then the natural action on the product $X\times\AA^1$ given by $s.(x,y) = (s.x,sy)$, for $(s,x,y)\in \GG_m\times X \times\AA^1$, is called a \emph{product test configuration} and denoted by $\X_α$.

    We also say that a test configuration $\X$ is \emph{almost trivial} if the normalisation of $\X$ is $\GG_m$-equivariantly isomorphic to a product test configuration induced from a trivial action.
\end{definition}
    Let $(\X,\scrL,ρ)$ be a test configuration. Then the pair $(\X_0,\scrL_0)$ is a $\GG_m$-scheme, which induces a $\GG_m$-representation on the vector space $H^0(\X_0,\scrL_0^k)$. We define the \emph{total weight} to be the trace of the infinitesimal generator $A_k$ of the $\GG_m$-representation on $H^0(X_0,\scrL_0^k)$. Alternatively, the total weight can be defined to be the weight of the $\GG_m$-action on the vector space $\det H^0(\X_0,\scrL_0)$. In order to define the norm of a test configuration we also define the trace squared function as the trace of the square of the infinitesimal generator $A_k$.
\begin{lemma}\label{lem:polys}\cite{Donaldson2008}
    There exist numbers $a_0,a_1,b_0,b_1$ and $c_0$ such that for $k$ sufficiently large we have
    \begin{align}\label{eq:FFuns}
                h(k)&\defeq  χ(Z,Λ^k) = a_0k^n+a_1 k^{n-1}+O(k^{n-2}),\\
                w(k)&\defeq \tr(A_k) = b_0k^{n+1}+b_1k^n +O(k^{n-1}),\\
                \intertext{and}
                d(k)&\defeq  \tr(A_k^2) = c_0k^{n+2}+O(k^{n}).
    \end{align}
    We call the three functions $h(k),w(k)$ and $d(k)$ defined in Equation~\eqref{eq:FFuns} the Hilbert function, weight function and the trace squared function, respectively, following \cite{codogni2015non}.
\end{lemma}

Following Donaldson, we define \emph{Donaldson-Futaki invariant} of $(\X,\scrL,ρ)$ by
\begin{equation}\label{eq:futdef}
    \DF(\X)=\frac{b_0a_1-a_0b_1}{a_0^2}.
\end{equation}

Define the \emph{norm} $\|\X\|$ of a test configuration $\X$ for $(Z,Λ)$ with exponent $r$ by
\begin{equation}\label{eq:norm}
    \|\X\| = r^{-n-2} \left(c_0 - \frac{b_0^2}{a_0} \right).
\end{equation}

\begin{definition}\label{def:kstab}
    Let $\test(X,L)$ denote the set of test configurations of $(X,L)$ which are not almost trivial. We say that $(X,L)$ is
    \begin{itemize}
        \item \emph{K-stable} if $\DF(\X) > 0$ for all $\X\in\test(X,L)$,
        \item \emph{K-polystable} if $\DF(\X) \geq 0$ for all $\X\in\test(X,L)$ and $\DF(\X)=0$ implies that $\X$ is a product test configuration,
        \item \emph{K-semistable} if $\DF(\X) \geq 0$ for all $\X\in\test(X,L)$,
        \item \emph{properly K-semistable} if $(X,L)$ is K-semistable but not K-polystable, and
        \item \emph{K-unstable} $(X,L)$ is not K-semistable.
    \end{itemize}
    If a test configuration $\X$ contradicts any of the first three properties, we say that $\X$ is \emph{destabilising}.
\end{definition}

\begin{remark}[Complements]
    Examples of all of the above notions are known in the strict sense. Any cscK projective manifold which admits infinitesimal automorphisms is at most strictly K-polystable. Keller gave examples of properly K-semistable ruled manifolds \cite{KellerRoss,keller2014projectivisation}. Slope unstable vector bundles on curves have K-unstable projectivisations (cf. Chapter \ref{chap:flags}). Thus, examples of all stability phenomena can already be found in the case of projective bundles.
\end{remark}

\begin{remark}[Invariance of K-stability under scaling]\label{rem:scale}
    K-stability is well-defined in the \emph{cone of polarisations}
    \begin{equation}\label{eq:Vdef}
        \VV(X) = \Amp(X) / \QQ_{>0},
    \end{equation}
    where $\Amp(X)$ is the cone of ample line bundles with rational coefficients. Replacing a Kähler form $ω$ by a multiple $kω$ scales the cohomology class by the same multiple $k$ and preserves constant scalar curvature metrics. Therefore being cscK is well defined in the projectivised Kähler cone as well.

    We may also \emph{rescale} the action by replacing $\X$ by a pullback under a covering map $t\mapsto t^r$ of $\AA^1=\Spec\kk[t]$. This has the effect of changing the weight function by a multiple of the Hilbert polynomial, which does not affect the Futaki invariant.
\end{remark}

\begin{remark}[K-stability and the Kähler cone]
    A natural way to approach the YTD correspondence is to compare the loci of K-polystable and cscK points in $\VV(B)$. If we assume that $\Aut(X)$ is discrete it follows from the work of LeBrun and Simanca \cite{lebrun1994extremal} that the cscK locus is open in the Euclidean topology. Not much is known about the K-stable locus in general.

     We return to the question of variation of the polarisation in Section~\ref{sec:operations_on_relative_test_configurations}.
\end{remark}
We would like to thank Dervan for pointing out the following example \cite{dervan2014alpha}.
\begin{example}[Explicit K-stable and K-unstable regions on blowups.]
    Let $X$ be a blowup of $\PP^2$ at 8 points with the polarisation $L_a = 3H  - E_1- a\sum_{i=2}^8E_i$, where $H$ is the hyperplane divisor and $E_1,\ldots, E_8$ are the exceptional divisors and $a\in \RR_{>0}$. Dervan showed, building on the work of Odaka-Sano \cite{odaka2012alpha}, that $(X,L_a)$ is K-stable for
    \begin{equation}
        \frac{1}{9}(10-\sqrt{10})<a < \frac{1}{9}(\sqrt{10}-2).
    \end{equation}
    Furthermore, by results of Ross and Thomas \cite[Example 5.30]{RossThomas}, there exists an $a_0>0$ such that $(X,L_a)$ is K-unstable for $0 < a < a_0$.
\end{example}

\begin{example}[K-stable and K-unstable polarisations on a ruled threefold.]
    Keller gave an example of a ruled threefold where there exist both K-stable and K-unstable polarisations \cite[Theorem 6.1.1]{kellermemoire}. The K-stable examples are constructed using results of Hong \cite{Hong2002}, Arezzo-Pacard \cite{arezzo2009blowing} and Stoppa \cite{Stoppa2009}, while the unstable examples are obtained by an explicit calculation of Futaki invariants somewhat similar to that done in Chapter~\ref{chap:flags}.
\end{example}

\begin{remark}
    It is also natural to study \emph{real polarisations} which may not define a line bundle, parametrised by
    \begin{equation}
        \VV(B)_\RR = \Amp(B)\ot\RR / \RR_{>0}.
    \end{equation}
    While Definition \ref{def:kstab} gave does not make sense for irrational polarisations, for example the theory of slope stability due to Ross and Thomas does \cite{RossThomas}. Chapter \ref{chap:filtrations} gives a method for parametrising test configurations along line segments of $\VV(B)$ where it may be possible to make sense of the irrational points.
\end{remark}

\section{An introduction to test configurations}\label{sec:testprop}
A test configuration can be embedded into a projective space by Kodaira maps of powers of the polarisation. Let $(\X,\scrL)$ be a test configuration for $(X,L)$. By Remark \ref{rem:scale} we may assume that $\scrL$ is very ample and that the exponent of $(\X,\scrL)$ is 1. Then we have an embedding $ι$ such that the diagram
\[
\begin{tikzcd}
\X \arrow[hook]{r}{ι} \arrow{dr}{π} & \PP\left(π_*\scrL\right) \arrow{d}\\
& \AA^1
\end{tikzcd}
\]
commutes. It follows by \cite[Lemma 2]{Donaldson2008} that there is an equivariant embedding
\begin{equation}\label{eq:tcemb}
    \X \hookrightarrow \PP^n\times\AA^1,
\end{equation}
where the usual $\GG_m$-action on $\AA^1$ is lifted to an action on the pair $(\PP^n,\shO(1))$.

\begin{remark}
    A tacit identification $(X,L)\cong (\X_1,\scrL_1)$ is always made when choosing a test configuration.
\end{remark}

In the following example we will give a description of the degeneration beginning with the projective embedding.
\begin{example}[Test configurations embedded in projective space (cf. Example \ref{ex:Gm})]\label{ex:1pms}
    Consider the projective scheme $(X,L)$ associated to a graded ring $A = R/I$, where
    \begin{equation}
        R=\kk[x_0,\ldots,x_n]
    \end{equation}
    and $I$ is an ideal generated by homogeneous elements of $R$. Let
    \begin{equation}\label{eq:ADet}
        φ:R\ot\kk[t,\tfrac{1}{t}]\rightarrow R\ot \kk[t,\tfrac{1}{t}] \ot \kk[s,\tfrac{1}{s}]
    \end{equation}
    be a homomorphism determined
    \begin{align}
        φ(x_i) &= s^{-w_i}x_i, \text{ for } i=0,\dotsc,n \\
        φ(t) &= s^{-1}t
    \end{align}
    where the integer $w_i$ is called the \emph{weight} of the variable $x_i$ in the (co)action $φ$. We assume that all weights are nonnegative without loss of generality. Similarly define the \emph{weight of a monomial} $x_1^{α_1}\cdots x_n^{α_n}$ to be $α_1w_1 + \dotsb + α_nw_n$, and the \emph{initial term} $\operatorname{in}(f)$ of $f\in R$ to be the sum of terms of highest weight in $t$ in $f$.

    Define a family
    \begin{equation}
        X\times\GG_m\subset \PP^n\times\GG_m
    \end{equation}
    whose ideal $J\subset R[t,\tfrac{1}{t}]$ is defined by making generators of $I$ invariant by multiplying the variables with an appropriate power of $t$. If $f$ is a generator of $I$, we define a generator $g$ of $J$ by
    \begin{equation}
        g(x_0,\dotsc,x_n,t) = t^cf(t^{-w_1}x_0,\dotsc,t^{-w_n}x_n),
    \end{equation}
    where $c$ is the weight of the terms of $\operatorname{in}(f)$. The Zariski closure of the scheme
    \begin{equation}
        \Proj_{\AA^1}R[t,\tfrac{1}{t}]/J \subset \PP^n\times \AA^1
    \end{equation}
    is a flat family over $\AA^1$ whose central fibre is defined by the ideal
    \begin{equation}
        \operatorname{In}(I) \defeq  \big(\operatorname{in}(f) : f\in I\big).
    \end{equation}
    The family of projective varieties $\shProj_{\AA^1}R[t]/J$ determined by the bigraded ring $R[t]/J$ is a test configuration for $(X,L)$.
\end{example}

\begin{remark}[The filtration associated to an embedded test configuration: A continuation of Example \ref{ex:1pms}]
    Here is another way to realise the ring $R[t]/J$. By rescaling the action if necessary we may assume that the largest of the weights $w_i$ is equal to -1. We then define a filtration of $A$ by $\kk$-vector spaces $F_iA$ by setting
    \begin{equation}
        \qquad \quad    F_i A = \operatorname{Span}_{\kk}\Bigg\{ f\in A : \mspace{-120mu}
        \begin{split}
             f \text{ can be written as a sum of monomials}  \\
                \mspace{-50mu}\text{of weight $i$ or less modulo } I
        \end{split}\:\Bigg\}.
    \end{equation}
    For any element $f\in A$ we define the \emph{level} of $f$ to be the number $\lev(f) = \min\{i: f\in F_iA\}$.

    The ring $R[t]/J$ is equivariantly isomorphic to the ring
    \begin{equation}
        \Rees F_\bullet A \defeq \bigoplus_{i=0}^\infty t^i F_i\left(A\right) \subset A[t],
    \end{equation}
    called the \emph{Rees algebra of $F_\bullet A$}, by the isomorphism taking $x_i$ to $t^{w_i}x_i$. Over the central fibre $(t)\in \AA^1$ we have
    \begin{equation}
        \frac{A[t]}{(t) + J} \cong A/\operatorname{In} I,
    \end{equation}
    and a corresponding isomorphism for the Rees algebra
    \begin{equation}\label{eq:ringCF}
         \frac{\Rees F_\bullet A}{(t)} \cong \bigoplus_{i=0}^\infty \frac{F_{i+1}A}{F_i A},
    \end{equation}
    where the latter ring is called the \emph{graded algebra of $F_\bullet A$}.
\end{remark}

\begin{remark}[A generalisation of K-stability]\label{rem:ringfilt}
    The filtration
    \begin{equation}\label{eq:AFilt}
        F_\bullet A : 0\subset \kk = F_0 A \subset F_1 A \subset \dotsb \subset A,
    \end{equation}
    defined in Example \ref{ex:1pms}, is due to Witt-Nyström and Székelyhidi \cite{witt2012test,szekelyhidi2011filtrations} and it has the following properties.
    \begin{enumerate}[(i)]
        \item It is \emph{multiplicative} meaning that it satisfies $(F_iA) (F_jA) \subset F_{i+j}$,.
        \item It is \emph{homogeneous}, that is, homogeneous parts of any element of $F_i A$ are all in $F_iA$.
        \item Every element in $A$ has finite level.
        \item The Rees algebra $\Rees F_\bullet A$ is finitely generated.
    \end{enumerate}
    The test configuration $\X$ from Equation \eqref{eq:tcemb} can be recovered from the filtration \ref{eq:AFilt} uniquely up to rescaling the action.

    A filtration satisfying properties (i)-(iii) is called \emph{admissible}. These properties were taken as an axiom by Székelyhidi in his formulation of \emph{\kbar-stability}, which enlarges the set of test configurations $\test(X,L)$ to include filtrations whose Rees algebra is not finitely generated. Without the assumption (iv) it is still possible to consider a corresponding \emph{sequence $\left(\X_j\right)_{j\in\NN}$ of test configurations}. The test configuration $\X_j$ is determined by an approximation $S_j$ of the Rees algebra $A$, where $S_j$ is the algebra generated by the submodule
    \begin{equation}
        \bigoplus_{k=0}^j F_k At^k\subset \Rees F_\bullet A.
    \end{equation}
    It is easy to show that for $i$ sufficiently large $\Proj_{\AA^1} S_j$ is a test configuration for $(X,L)$. Székelyhidi defined the Futaki invariant of this sequence to be
    \begin{equation}
        \liminf_{i\to\infty} \DF(\X_i)
    \end{equation}
    and proved, together with Boucksom and Stoppa \cite{stoppa2011relative}, the \kbar-stability of a cscK polarised variety $(X,L)$, assuming it has no infinitesimal automorphisms.

    While the limit of the sequence $\X_i$ is not an algebraic object, it has an analytic interpretation in the space of Kähler potentials \cite{ross2014analytic}. Therefore the set of test configurations has a limited analytic compactification with respect to these very special sequences.
\end{remark}



\chapter{A formula for the Chern character of a Schur power}\label{chap:chern}
This chapter is entirely devoted to a technical result used in the computation of the weight polynomial of a flag bundle. We let $r$ and $λ$ be such that $λ\in\shP(r)$ throughout. We also fix a smooth proper scheme $B$ of dimension $b$ and a vector bundle $E$ of rank ${r_E}$. Let $p$ be the projection $p\colon\Flag_r(E)\rightarrow B$.

 Of independent interest would be finding a more general and more elegant formulation for Theorem \ref{thm:maina} (Theorem \ref{summaryC}), which gives a formula for the second order asymptotics of the polynomial $\ch E^{kλ}$ under certain hypotheses. Laurent Manivel has previously calculated the highest order term in \cite[Section 3]{manivel1994theoreme}. Background on Chern classes can be found in the seminal work of Grothendieck \cite{grothendieck1958theorie}.

\section{A formula for the Chern character}\label{sec:chern}
\label{sec:formula}

    If $P$ is a symmetric polynomial and $E$ is a vector bundle with Chern roots $x_1,\dotsc, x_{r_E}$, we write $P(E)=P(x_1,\dotsc,x_{r_E})$. On the other hand it also makes sense to consider the polynomial $P$ on the algebra generated by line bundles on a variety and operations defined by direct sums and tensor products. In this case we write $P(L_1,\dotsc,L_{r_E})$ for the resulting vector bundle, not to be confused with $P(E)$, which is a cohomology class.

Let
\begin{equation}
    c_r(x_1,\dotsc,x_{r_E}) = \sum_{1\leq i_1 < i_2 < \dotsb < i_r \leq {r_E}} x_{i_1} \dotsm x_{i_r}
\end{equation}
denote the $r$th elementary symmetric polynomial in $x_1,\dotsc,x_{r_E}$. Similarly we have the complete symmetric polynomial
\begin{equation}
    h_r(x_1,\dotsc,x_{r_E})=\sum_{1\leq i_1 \leq i_2 \leq \dotsb \leq i_r \leq {r_E}} x_{i_1} \dotsm x_{i_r}.
\end{equation}
Recall that Schur polynomials are a basis of the algebra of symmetric funtion, which appear naturally when computing the cohomology of Schur powers of vector bundles. We define Schur polynomials by using the Giambelli formula \cite[Appendix A]{FultonRep} as
\begin{equation}\label{eq:schurdef}
    s_λ = \det \left(h_{λ_i-i+j}\right)_{1\leq i,j\leq l}
\end{equation}
associated to a partition $λ$. In particular, $s_{(k)} = h_k$ and $s_{1^k} = c_k$.

\begin{definition}\label{def:canonical}
    Define the \emph{canonical partition} $σ=σ_{{r_E},r}$ depending on the parameter $r$ by
    \begin{equation}\label{eq:canonicalpart}
        σ_i = {r_E} + l(λ) - r^+(i) - r^-(i)
    \end{equation}
    where $r^+(i)$ is the smallest integer in $r$ satisfying $r^+(i)\geq i$ and $r^-(i)$ the largest integer in $r$ satisfying $r^-(i)<i$.
\end{definition}
\begin{example}[The canonical bundle of a Grassmannian]
    Consider the Grassmannian case $r = (p)$, where $1\leq p < r_E$. Now the canonical partition $σ$ is the constant partition $(r_E^p)$, which corresponds to the $r_E$th multiple of the hyperplane bundle in the case $p = 1$. Note that the relative canonical bundle of $\PP E$ over $B$ is the dual of the corresponding line bundle $\shL_{σ}$.
\end{example}

\begin{theorem}\label{thm:maina}
    Let $E$ be a vector bundle of rank $E$ and $λ$ a partition whose jumps are given by $r$. Assume that $λ$ satisfies at least one of the following conditions
    \begin{itemize}
        \item $l(λ)\leq 4$
        \item $λ=tσ_{{r_E},r}$ for some $t\in\QQ$ and ${r_E}>r_c$.
    \end{itemize}
    Then there exist polynomials $B_i(E,λ) \in \QQ[λ_1,\dotsc λ_l,c_1(E),\dotsc,c_{r_E}(E)]$ such that
    \begin{equation}\label{eq:conj}
        \ch E^\lambda = \rank E^\lambda \left( 1 + B_1(E,λ) + B_2(E,λ) + \dotsb + B_n(E,λ) \right)
    \end{equation}
    where $B_i(E,λ)$ is homogeneous of degree $i$ as an element of the Chow ring of $X$ and of degree $i$ in the $λ_i$. The polynomials $B_1(E,λ)$ and $B_2(E,λ)$ are given by
    \begin{equation}\label{eqmaina1}
        B_1(E,λ)= \frac{c_1(λ)c_1(E)}{r_E}
    \end{equation}
    and
    \begin{equation}\label{eqmaina2}
        \begin{split}
            B_2(E,λ)&\equiv_1 \frac{h_2(λ)h_2(E)}{{r_E}({r_E}+1)} + \frac{c_2(λ)c_2(E)}{{r_E}({r_E}-1)} \\
            &+ \frac{{r_E}c_1(λ) - \sum_i(2i-1)λ_i}{2} \left(\frac{h_2(E)}{{r_E}({r_E}+1)}  - \frac{c_2(E)}{{r_E}({r_E}-1)} \right) + O(1).
        \end{split}
    \end{equation}
    where $O(1)$ denotes a term independent of $λ$. By the equivalence $\equiv_1$ we mean the following: If $U$ and $V$ are $k$-cycles in $B$, then $U\equiv_1 V$ if $c_1(A)^{n-k}.(U-V)$ is equal to 0 for all line bundles $A\in \Pic B$.
\end{theorem}
It is straightforward to check in cases which yield to computer analysis that it is not necessary to assume $\diamond$ for the identity in Equation~\eqref{eq:STA} to hold, but we were unable to find a proof in the general case. Under the assumption $\diamond$, we prove the statement using the following determinantal identity, which the author learned from a paper \cite{Bruckmann2008} pointed out by Will Donovan.
\begin{lemma}[Determinantal identity]\label{lem:determinantal}
    Let $E$ be a vector bundle of rank ${r_E}$ and $\lambda$ a partition of length $l$. The Chern character of a Schur power of $E$ is
    \begin{align}\label{eqschurchar}
        \mathrm{ch} E^\lambda  = \det \left(\mathrm{ch} (S^{\lambda_i+j-i}E)\right)_{i,j}
    \end{align}
\end{lemma}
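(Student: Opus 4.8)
The plan is to reduce \eqref{eqschurchar} to a classical identity of symmetric functions via the splitting principle. Both sides are universal polynomials in the Chern classes $c_1(E),\ldots,c_e(E)$: the right-hand side manifestly, since $\mathrm{ch}(\mathrm{Sym}^k E)$ is a polynomial in the $c_i(E)$ by the standard Chern-class formalism; the left-hand side because $E\mapsto E^\lambda$ is functorial and hence commutes with pullback, so $\mathrm{ch}(E^\lambda)$ is again a universal polynomial in the $c_i(E)$. It therefore suffices to prove the identity after pulling back along a flag bundle over $X$ on which $E$ splits as a direct sum $L_1\oplus\cdots\oplus L_e$ of line bundles \cite{Fulton1998}; write $x_i=c_1(L_i)$ for the resulting Chern roots and let $\mathrm{ev}\colon \mathbb{Q}[t_1,\ldots,t_e]^{S_e}\to H^*(X,\mathbb{Q})$ be the evaluation homomorphism $t_i\mapsto e^{x_i}$.

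On such a splitting the Chern character is obtained by applying $\mathrm{ev}$ to the appropriate symmetric function. Indeed $\mathrm{Sym}^k(L_1\oplus\cdots\oplus L_e)$ decomposes functorially into the direct sum of the line bundles $L_1^{a_1}\otimes\cdots\otimes L_e^{a_e}$ over $a_1+\cdots+a_e=k$, so additivity and multiplicativity of $\mathrm{ch}$ give $\mathrm{ch}(\mathrm{Sym}^k E)=h_k(e^{x_1},\ldots,e^{x_e})=\mathrm{ev}(h_k)$, where $h_k$ is the $k$-th complete homogeneous symmetric polynomial. Likewise $E^\lambda=\mathbb{S}_\lambda(L_1\oplus\cdots\oplus L_e)$ decomposes, with Kostka multiplicities, into a sum of such monomial line bundles, and the corresponding character identity is precisely that the Schur functor of the standard representation of $\mathrm{GL}_e$ has torus character the Schur polynomial $s_\lambda$; hence $\mathrm{ch}(E^\lambda)=s_\lambda(e^{x_1},\ldots,e^{x_e})=\mathrm{ev}(s_\lambda)$.

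It remains to invoke the Jacobi--Trudi (Giambelli) identity
\begin{equation*}
	s_\lambda=\det\left(h_{\lambda_i+j-i}\right)_{1\le i,j\le l}
\end{equation*}
in the ring of symmetric functions, with the conventions $h_0=1$ and $h_m=0$ for $m<0$ \cite{Fulton1997}. Applying the ring homomorphism $\mathrm{ev}$ to both sides sends $s_\lambda$ to $\mathrm{ch}(E^\lambda)$ and each entry $h_{\lambda_i+j-i}$ to $\mathrm{ch}(\mathrm{Sym}^{\lambda_i+j-i}E)$, where we note that $\mathrm{Sym}^0 E=\O_X$ has Chern character $1$ and that entries with negative index vanish, matching the conventions. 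Since $\det$ is a polynomial operation, this commutes with $\mathrm{ev}$, and we obtain $\mathrm{ch}(E^\lambda)=\det(\mathrm{ch}(\mathrm{Sym}^{\lambda_i+j-i}E))_{i,j}$ on the splitting bundle; by the splitting principle the same identity holds on $X$, which is \eqref{eqschurchar}.

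The only genuinely nontrivial ingredient is the identification $\mathrm{ch}(E^\lambda)=s_\lambda(e^{x_1},\ldots,e^{x_e})$, i.e. that the Schur functor of a sum of line bundles has character the Schur polynomial; the rest is either formal (the splitting principle and the additive/multiplicative behaviour of $\mathrm{ch}$) or the cited combinatorial identity. One could alternatively \emph{define} $s_\lambda$ by Jacobi--Trudi, in which case the only content is the splitting-principle reduction, but it seems cleaner to keep the representation-theoretic fact explicit, since it is what links the geometric object $E^\lambda$ to the symmetric-function combinatorics.
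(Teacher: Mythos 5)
Your proof is correct and follows essentially the same route as the paper: reduce to a sum of line bundles by the splitting principle, identify $\mathrm{ch}(\Sym^k E)$ and $\mathrm{ch}(E^\lambda)$ with the complete homogeneous and Schur symmetric functions evaluated at $e^{x_i}$, and apply the Jacobi--Trudi (Giambelli) identity. Your version is slightly more careful in making the evaluation homomorphism explicit (so the determinant, with its signs, is handled at the level of symmetric functions rather than of virtual bundles) and in recording the conventions $h_0=1$, $h_m=0$ for $m<0$, but the substance is identical.
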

\begin{proof}
    By the splitting principle \cite[Remark 3.2.3]{Fulton1998} we may assume that $E=L_1\oplus\dotsb \oplus L_{r_E}$. Let $p$ be a polynomial function on the set of factors $L_1,\dotsc,L_{r_E}$ with integral coefficients $a_I$ for $I=(i_1,\dotsc, i_{r_E})$. We denote
\begin{equation}
    p(L_1,\ldots,L_{r_E})=\bigoplus_I \left(L_1^{i_1}\otimes\dotsm\otimes L_{r_E}^{i_{r_E}}\right)^{\oplus a_I},
\end{equation}
Schur powers of decomposable vector bundles can be expressed in as
\begin{equation}
    E^λ = s_λ(L_1,\ldots,L_{r_E}),
\end{equation}
which we expand as a determinant using Equation~\eqref{eq:schurdef}
\begin{equation}
    s_\lambda(L_1,\ldots,L_{r_E}) = \det\left(h_{\lambda_i+j-i}(L_1,\ldots,L_{r_E})\right)_{i,j}.
\end{equation}
Taking Chern characters on both sides completes the proof of the Lemma.
\end{proof}

\begin{lemma}\label{lem:sym_chern}
    Let $E$ be a vector bundle of rank ${r_E}$. The Chern character of the bundle $S^k E$ is
    \begin{equation}
        \binom{k+{r_E}-1}{{r_E}}\left(1 + \frac{c_1(E)}{r_E}k + A_1(E)k^2 + A_2(E)k + Z\right),
    \end{equation}
    where $A_1(E),A_2(E)\in\QQ[x_1\ldots x_{r_E}]$ are given by
    \begin{equation}
        A_1(E)=\frac{h_2(E)}{{r_E}({r_E}+1)}\label{eqa},
    \end{equation}
    \begin{equation}
        A_2(E)=\frac{{r_E}-1}{2}\left(\frac{h_2(E)}{{r_E}({r_E}+1)}-\frac{c_2(E)}{{r_E}({r_E}-1)}\right)\label{eqaa}
    \end{equation}
    and $Z$ is a sum of terms of Chow degree 3 and higher.
\end{lemma}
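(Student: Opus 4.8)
The plan is to reduce to a sum of line bundles by the splitting principle, write $\mathrm{ch}(\Sym^kE)$ explicitly as a symmetric series in the Chern roots, and then isolate the pieces of Chow degree $0$, $1$ and $2$.

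First I would pass to a flag bundle $f\colon Y\to X$ on which $f^*E\cong L_1\oplus\cdots\oplus L_e$ splits into line bundles, with $f^*$ injective on Chow rings; put $x_i=c_1(L_i)$, so that $c_1(E)=\sum_i x_i$ and $c_2(E)=\sum_{i<j}x_ix_j$. Since $\Sym^k$ commutes with pullback it suffices to treat $E=L_1\oplus\cdots\oplus L_e$, where
\[
\Sym^kE=\bigoplus_{a_1+\cdots+a_e=k}L_1^{\otimes a_1}\otimes\cdots\otimes L_e^{\otimes a_e},\qquad
\mathrm{ch}(\Sym^kE)=\sum_{|a|=k}e^{a_1x_1+\cdots+a_ex_e},
\]
the sums running over all $a=(a_1,\dots,a_e)$ with $a_i\in\mathbb{Z}_{\ge0}$ and $\sum_ia_i=k$. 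It is convenient to encode this in the generating function $\sum_{k\ge0}\mathrm{ch}(\Sym^kE)\,z^k=\prod_{i=1}^e(1-ze^{x_i})^{-1}$ and to factor $1-ze^{x_i}=(1-z)\bigl(1-\tfrac{z}{1-z}(x_i+\tfrac12x_i^2+\cdots)\bigr)$, which separates the $z$-expansion from the expansion in the nilpotents $x_i$.

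I would then read off the three lowest-degree pieces. The Chow-degree-$0$ part is $\#\{a:|a|=k\}=\binom{k+e-1}{e-1}=\rank\Sym^kE$, the common prefactor. For the degree-$1$ and degree-$2$ parts I would exploit the $S_e$-symmetry of the sum over $a$: the sums $\sum_{|a|=k}a_i$, $\sum_{|a|=k}a_i^2$ and $\sum_{|a|=k}a_ia_j$ (for $i\ne j$) are independent of $i$ (resp.\ of the unordered pair), so they are determined by $\sum_ia_i=k$, by $(\sum_ia_i)^2=\sum_ia_i^2+2\sum_{i<j}a_ia_j$, and by one coefficient extraction of the form $[z^k]\,z^m(1-z)^{-(e+m)}=\binom{k+e-1}{e+m-1}$ taken from the generating function. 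Substituting $\sum_ix_i^2=c_1(E)^2-2c_2(E)$ and $\sum_{i<j}x_ix_j=c_2(E)$, collecting the degree-$2$ terms, and dividing by $\binom{k+e-1}{e-1}$ then produces the asserted coefficients $A_1(E)$ and $A_2(E)$, with all monomials of Chow degree $\ge3$ absorbed into $Z$.

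The only real difficulty is bookkeeping: correctly deciding which monomials contribute to each Chow degree, evaluating the resulting sums over weak compositions of $k$ (equivalently the relevant coefficients of $(1-z)^{-(e+j)}$), and simplifying the polynomials in $k$ and the elementary symmetric functions into the stated $\QQ[c_1(E),c_2(E)]$-valued expressions. No geometry enters beyond the splitting principle; the care is entirely in the binomial identities.
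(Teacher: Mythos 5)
Your proposal is correct and follows essentially the same route as the paper: splitting principle, expansion of $\mathrm{ch}(\Sym^kE)=\sum_{|a|=k}e^{a_1x_1+\cdots+a_ex_e}$, and evaluation of the symmetric sums $\sum_{|a|=k}a_i$, $\sum_{|a|=k}a_i^2$, $\sum_{|a|=k}a_ia_j$ — the paper handles this last step via inductive binomial identities in its appendix, whereas you extract coefficients from $\prod_i(1-ze^{x_i})^{-1}$, which is only a difference in bookkeeping. One caution: actually carrying out your computation yields the linear term $\frac{c_1(E)}{e}k$ and $A_2(E)=\frac{(e-1)c_1(E)^2}{2e(e+1)}-\frac{c_2(E)}{e+1}$, which is what the paper's own proof derives and what its later sections use; the statement as printed (missing the factor $k$ and the factor $e-1$ in \eqref{eqaa}) contains typos, so you would not literally recover the ``asserted coefficients'' and should not claim to.
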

\begin{proof}\label{pf:sym_chern}
    Recall the definition of the monomial symmetric function $m_μ$ of partition $μ$ of length at most $n$. Given variables $y=(y_1,\ldots,y_n)$ we set
    \begin{equation}
        m_μ(y) = \sum_{σ\in\mathfrak{S}_n} y^{μ_1}_{σ(1)}\dotsm y^{μ_n}_{σ(n)}.
    \end{equation}
    We have
    \begin{equation}
        \begin{split}
            \mathrm{ch}(S^k E) &= \ch h_k(E)\\&=\ch\sum_\mu m_μ(E)\\
             &= \sum_\mu (1+\mu_1x_1+μ_1^2 x_1^2/2+\dotsb)\cdot \dotsm \cdot (1+μ_{r_E}x_{r_E}+μ_{r_E}^2x_{r_E}^2/2+\dotsb)
        \end{split}
    \end{equation}
    where the sum is over all ${r_E}$-tuples that sum to $k$. The rest of the computation is an elementary summation. The Chow-degree one part of $\mathrm{ch}(S^k E)$ is
    \begin{equation}
        \ch(S^k E)_1=  \rank \left(S^k E\right) \frac{c_1(E)}{{r_E}},
    \end{equation}
    where
    \begin{equation}
        \rank \left(S^k E\right) = \binom{k+{r_E}-1}{{r_E}-1}.
    \end{equation}
    The degree two term can be written as
    \begin{equation}
        \sum_{i=1}^k\sum_{j=1}^{k-i} ij {{r_E}-3+k-i-j \choose {r_E}-3} \sum_{l <  m}^{r_E} x_l x_m + \sum_{i=1}^k i^2 {{r_E}-2+k-i \choose {r_E}-2} \sum_{l=1}^{r_E} x_l^2/2,
    \end{equation}
    which using the combinatorial identities proved in the appendix simplifies to
    \begin{equation}
        \frac{(k+{r_E}-1)!}{(k-2)!({r_E}+1)!} \sum_{m < l}^{r_E} x_m x_l + \frac{({r_E}+2k-1)(k+{r_E}-1)!}{(k-1)!({r_E}+1)!} \sum_{m=1}^{r_E} x_m^2/2.
    \end{equation}
    Picking out the rank ${r_{S^kE}}$ of $S^k E$ as a common factor yields
    \begin{equation}
        \mathrm{ch}_2(S^k E)={r_{S^kE}} \left( \frac{k(k-1)}{{r_E}({r_E}+1)}\sum_{m < l}^{r_E} x_m x_l + \frac{2k^2 +k({r_E}-1)}{r_E(r_E+1)} \sum_m x_m^2/2  \right)
    \end{equation}
    Recall that the Chern classes of $E$, when written in terms of the $x_i$, are
    \begin{equation}
        c_1(E)^2= h_2(E) + c_2(E)= \sum_{m=1}^{r_E} x_m^2 + 2 \sum_{m < l}^{r_E}x_mx_l
    \end{equation}
    and
    \begin{equation}
        c_2(E) = \sum_{m < l}^{r_E} x_m x_l.
    \end{equation}
    Thus we have
    \begin{equation}\label{eqsym}
        \ch\left( S^k E \right) = \rank (S^k E)\left( 1 + \frac{c_1(E)}{{r_E}} k + A_1(E) k^2 + A_2(E) k + Z\right),
    \end{equation}
    where
    \begin{equation}
            A_1(E)=\frac{h_2(E)}{{r_E}({r_E}+1)},\\
    \end{equation}
    \begin{equation}
            A_2(E)=\frac{({r_E}-1)c_1(E)^2}{2{r_E}({r_E}+1)} - \frac{c_2(E)}{{r_E}+1} =\frac{{r_E}-1}{2}\left(\frac{h_2(E)}{{r_E}({r_E}+1)}-\frac{c_2(E)}{{r_E}({r_E}-1)}\right)
    \end{equation}
    and $Z$ is a sum of terms of Chow degree 3 and higher
\end{proof}

\begin{remark}
    The length of a partition $λ$ whose jumps are given by $r$ is the largest integer $r_c$ in $r$.
\end{remark}

\begin{proposition}
    Theorem \ref{thm:maina} holds for partitions up to length 4.
\end{proposition}
\begin{proof}
    This is an easy calculation for a computer using Lemma \ref{lem:sym_chern} and Lemma \ref{lem:determinantal} \cite[Calculation of Chern classes for Schur powers]{codepage}.
\end{proof}
\begin{remark}[{\cite[Section 3]{manivel1994theoreme}}]
    Alternatively one may expand the Chern character of $S^k E$ as
        \begin{equation}
            \sum_{p,q} x^p \prod_{i=1}^r \frac{a_{p_i,q_i}}{p_i!}\binom{k+{r_E}-1+|q|}{{r_E}-1+|p|}
        \end{equation}
        where $p,q$ range over $r$-tuples of nonnegative integers and $a_{i,j}$ is the $j$th coefficient of the $i$th Euler polynomial \cite[Proposition 2.2]{manivel1994theoreme}. This way the existence of claimed decomposition
    \begin{equation}
        \ch (S^k E) = \rank (S^k E) A(k)
    \end{equation}
    is clear for higher degree terms as well. The determinantal identity implies that we have
    \begin{equation}
    \ch(E^λ) =  \sum_{p_i,q_j\in\NN^{r_E}}\frac{x^{p_1+\dotsb+p_l}}{p_1!\dotsm p_l!}a_{p_1,q_1}\dotsm a_{p_l,q_l}\det\left(\binom{{r_E}+λ_i+|q_i|-i+j-1}{{r_E}+|p_i|-1}\right)_{1\leq i,j \leq l}
    \end{equation}
\end{remark}
Let $p:\PP E\rightarrow X$ denote the projection. It is well known that we have the pushforward formula
\begin{equation}
    \int_{\PP E}p_* c_1\left(\shO_{\PP E}(1)\right)^{n+r-1} =  \int_X h_n(E).
\end{equation}
This formula generalises to the following theorem by Laurent Manivel.
\begin{theorem}[{\cite[Proposition 3.1]{manivel1994theoreme}}]\label{thm:manivel}
Let $λ$ be a partition whose jumps are given by $r$ and $m\ZZ_{\geq 0}$. Then we have
\begin{equation}
    p_{*} \frac{c_1(\shL_λ)^{N + m}}{(N + m)!} \equiv_1 C_{λ,{r_E}} \sum_{|μ|=m,l(μ)\leq l(λ)}\frac{s_μ(λ)s_μ(E)}{\prod_{k=1}^{l(λ)}(r_E+μ_k-k)!},
\end{equation}
where $C_{λ,{r_E}} = \prod_{i=1}^{l(λ)}(s^+(i)-i)!\prod_{λ_i>λ_j}(λ_i-λ_j)$. For $m=n$ we have equality of cycles, while for $m<n$, the relation $\equiv_1$ is the one defined in Theorem \ref{thm:maina}
\end{theorem}

\begin{remark}
    The result stated in \cite{manivel1994theoreme} actually claims equality at the level of cycle classes. As we were unable to reproduce the details which were left for the reader in the paper, we state a slightly weaker result, but this is enough for our purposes.
\end{remark}

\begin{remark}
    Although the highest order term of each $B_i(E,λ)$ is a symmetric function with respect to the $λ$, this is not the case for the lower order terms, or indeed for the entire Chern character.
\end{remark}

\begin{remark}\label{rem:leadingcoeff}
    In particular, Theorem \ref{thm:manivel} computes the leading coefficient
    \begin{equation}
        D_{λ,{r_E}}:=\frac{C_{λ,{r_E}}}{\prod_{i=1}^{l(λ)} ({r_E}-i)!}
    \end{equation}
    of the Hilbert polynomial of a fibre $π^{-1}(x)$ for any $x\in B$.
\end{remark}

\begin{remark}
    We can write the line bundle $\shL_σ$ in terms of the tautological subbundles as
    \begin{equation}
        \bigotimes_{i=1}^c\left(\det \shR_i^*\right)^{r_{i+1}-r_{i-1}}.
    \end{equation}
\end{remark}

\begin{lemma}[Canonical bundle of the flag variety]\label{lem:fcanonical}
    The canonical class of $\Flag_r(E)$ is
    \begin{equation}\label{eq:fcanonical}
        c_1(\shL_{-σ}\otimes p^*\left(K_B\otimes\det E^{l(σ)}\right)),
    \end{equation}
        where $σ$ is the canonical partition defined Definition \ref{def:canonical} and $\shL_{-σ}$ denotes the dual of $\shL_σ$.
\end{lemma}

\begin{proof}
    Consider the exact sequence
    \begin{equation}\label{eq:relK}
        0\lra \shV_{\Flag_r(E)}\lra \shT_{\Flag_r(E)}\lra \shH_B\lra 0
    \end{equation}
    where $\shV_{\Flag_r(E)}$ is the relative tangent bundle of the fibration $\Flag_r(E)\ra B$, $\shT_{\Flag_r(E)}$ is the tangent bundle and $\shH_B$ is isomorphic to the pullback of the tangent bundle of the base $B$. The relative tangent bundle $\shV_{\Flag_r(E)}$ has a filtration
    \begin{equation}
        0\subset F_1\subset\dotsb\subset F_N\subset \shV_{\Flag_r(E)}
    \end{equation}
    such that
    \begin{equation}\label{eq:lb1}
        \bigoplus_{i=1}^N F_{i+1}/F_{i} = \bigoplus_{1\leq i<j\leq c} \shQ_i\otimes\shQ_j^*
    \end{equation}
    This can be seen by successive fibrations by bundles of $r'$-flags, where $r'$ is a subset of $r$ \cite{lam1975formula}. We have
    \begin{equation}
        \det(\shV_{\Flag_r(E)})^* \cong \det\left(\bigoplus_{1\leq i<j\leq c} \shQ_i\otimes\shQ_j^*\right)^*
    \end{equation}
    Denote $\det \shR_i^* = L_i$ and define
    \begin{equation}
            A(k)\defeq\det\left(\bigoplus_{1\leq i<j\leq k+1}  \shQ_i\otimes\shQ_j^* \right) = \det\left(\bigoplus_{1\leq i<j\leq k} \shR^*_i/\shR^*_{i-1}\otimes\shR_j/\shR_{j-1}\right).
    \end{equation}
    We expand the determinant of the vector bundle of Equation~\eqref{eq:lb1} as
    \begin{equation}
            A(c)=\det\left(\bigoplus_{1\leq i<j\leq c+1}  \shQ_i\otimes\shQ_j^* \right) = \det\left(\bigoplus_{1\leq i<j\leq c} \shR^*_i/\shR^*_{i-1}\otimes\shR_j/\shR_{j-1}\right).
    \end{equation}
    This is convenient to write in additive notation as
    \begin{equation}\label{eq:telescope}
        \sum_{1\leq i < j \leq c+1} \left(- (r_i-r_{i-1}) \left(L_j-L_{j-1}\right) + (r_j-r_{j-1}) \left(L_i-L_{i-1}\right)\right).
    \end{equation}
    We have
    \begin{equation}
        A(k)-A(k-1) = r_k L_{k-1} - r_{k-1} L_k.
    \end{equation}
    for any $1\leq k\leq c$. Therefore, we can see that the sum in Equation \ref{eq:telescope} telescopes and we find
    \begin{equation}
        A(c)  =  \sum_{i=1}^{c} (r_{i+1}-r_{i-1})L_i - r_c L_{c+1}.
    \end{equation}
    Finally, the identity
    \begin{equation}
        K_{\Flag_r(E)}= -A(c) + p^*K_B,
    \end{equation}
    follows from Equation \ref{eq:relK}. This completes the proof of the Lemma.
\end{proof}

\begin{lemma}\label{lem:combinatorics}
    Let $r$ be an increasing sequence of $c$ positive integers. Then $σ=σ_{{r_E},r}$ is a partition of length $r_c$ with $r_c<{r_E}$. We have
    \begin{equation}
        |σ| = {r_E}r_c,
    \end{equation}
    \begin{equation}
        \sum_{i=1}^{r_c}(2i-1)σ_i = r_c^2{r_E} - \sum_{i=1}^{c-1} r_ir_{i+1}(r_{i+1}-r_i),
    \end{equation}
    and
    \begin{equation}
        \begin{split}
            h_2(σ) &= \frac{1}{2}\left(r_c{r_E}^2(r_c+1) + \sum_{i=1}^{c-1}r_ir_{i+1}(r_{i+1}-r_i)\right),
        \end{split}
    \end{equation}
\end{lemma}

\begin{proof}
    The proof is a direct calculation. We prove the third identity, which is marginally more difficult than the first two.
    First notice that given an integer $n$ and an $l$-tuple $λ$, we have
    \begin{equation}
        h_2(n+λ) = \frac{l(l+1)}{2}n^2 + (l+1)n|λ| + h_2(λ).
    \end{equation}
    where $n$ is considered to be the constant $l$-tuple $(n,\dotsc,n)$. Applying this in the case $n=r_E+r_c$ and $λ=-(r^++r^-)$ it suffices to show that
    \begin{equation}
        h_2(r^++r^-) = \frac{1}{2}\left(r_c^3(r_c+1) + \sum_{i=1}^{c-1}r_ir_{i+1}(r_{i+1}-r_i)\right).
    \end{equation}
    This is proved by induction. Let $s$ be the tuple $(r_1,\dotsc,r_{c-1})$. We then have
    \begin{equation}
        \begin{split}
            h_2(r^++r^-)-h_2(s^++s^-) &= (r_c+r_{c-1})^2(r_c-r_{c-1})(r_c-r_{c-1} + 1)/2 \\
            &+\sum_{i=1}^{c-1}(r_i-r_{i-1})(r_i+r_{i-1})(r_c-r_{c-1})(r_c+r_{c-1})\\
            &=r_c^3(r_c+1)/2+r_{c-1}^3(r_{c-1}+1)/2 + r_c r_{c-1}(r_c-r_{c-1})/2
        \end{split}
    \end{equation}
    from which the claim follows.
\end{proof}

Let $N_{{r_E},r}$ denote the relative dimension of a bundle of $r$-flags, given by
\begin{equation}\label{eq:flagdim}
    N_{{r_E},r} = \sum_{i=1}^cr_i(r_{i+1}-r_i),
\end{equation}
with the convention $r_{c+1} = {r_E}$.

\begin{proof}[Proof of Theorem \ref{thm:maina}]
Retain the notation in the statement of the Theorem and denote $N=N_{{r_E},r}$. Assume that $λ=tσ$ for some $t\in\QQ$. The leading order term of $B_2(E,kλ)$ in $k$ is
\begin{equation}
    p_{r*} \frac{c_1(\shL_λ)^{N + 2}}{(N + 2)!} \equiv_1 D_{λ,{r_E}} \left(\frac{h_2(λ)h_2(E)}{{r_E}({r_E}+1)} + \frac{c_2(λ)c_2(E)}{r_E({r_E}-1)}\right),
\end{equation}
by Theorem \ref{thm:manivel}. The term $B_1(E,kλ)$ can be computed easily using the splitting principle. In general, we have
\begin{equation}
    c_1(E^λ) = \rank E^λ c_1(λ)c_1(E)/{r_E}.
\end{equation}
It suffices to verify that the $k$-linear term of $B_2(E,kλ)$ satisfies the claimed identity.

For any line bundle $L$ on the base $B$, the Hirzebruch-Riemann-Roch formula applied to the vector bundle $\left(E\ot L\right)^{kλ}$ yields
\begin{equation}\label{eq:hrr1}
    \begin{split}
        χ(B,E^{ktσ})&=\int_B \ch L^{k|λ|} \ch E^{kλ}\mathrm{Td}_B\\
        &=\int_B \sum_{i=0}^b \frac{\left(k|λ|c_1(L)\right)^i}{i!}\ch E^{kλ}\mathrm{Td}_B.
    \end{split}
\end{equation}
Moreover, we have
\begin{equation}
    \frac{c_1(\shL_λ(A))^{N+n}}{(N+n)!} = \sum_{i=1}^n  \frac{c_1(\shL_λ)^{N+i}}{(N+i)!} p^* \frac{c_1(A)^{n-i}}{(n-i)!}
\end{equation}
for all $n\geq 1$ and $A\in \Pic B$.

By the asymptotic Hirzebruch-Riemann-Roch formula on $\Flag_r(E)$ for the line bundle $\shL_λ(L^{|λ|})^{\otimes k}$, we have
\begin{equation}\label{eq:HRRcanonical}
    \begin{split}
        χ(\Flag_r(E),\shL_λ(L^{|λ|})^k) = \int_{\Flag_r(E)} &\left( \frac{c_1(\shL_λ(L^{|λ|}))^{N + b}}{(N + b)!} k^{N+b}\right. \\
        & \left. -\frac{c_1(\shL_λ(L^{|λ|}))^{N + b-1} K_{\Flag_r(E)}}{2(N + b-1)!} k^{N+b-1} \right)  +O(k^{N+b-2})
    \end{split}
\end{equation}
The remaining part of the statement then follows by comparing the $k$-degree $b-1$ coefficients of the $c_1(L)^{b-2}$ term in Equation~\eqref{eq:hrr1} and Equation~\eqref{eq:HRRcanonical}, latter of which is equal to
\begin{equation}
    k^{N+1}\int_X \left( \frac{\left(p_{r*}c_1(\shL_λ)\right)^{N+2}}{2t(N + 1)!} - \frac{\left(p_{r*}c_1(\shL_σ)\right)^{N + 1} \left(c_1((\det E)^{\otimes l(λ)}) + K_B\right)}{2(N + 1)!}\right)\frac{c_1(L^{b-2})}{(b-2)!}.
\end{equation}
by Lemma \ref{lem:fcanonical}.
We write
\begin{equation}
    B_2(E,kλ) = k^2B_{2,2} + kB_{2,1} + O(k^0)
\end{equation}
and expand the Chern character in of $E^{kλ}$ as
\begin{equation}\label{eq:factored}
    \ch E^{kλ} = D_{λ,{r_E}} \left( k^{N}+\frac{N}{2t}k^{N-1} + O(k^0)\right) \left( 1 + B_1(E,kλ) + k^2B_{2,2} + kB_{2,1}\right).
\end{equation}
We can see that
\begin{equation}
    B_{2,1} = \left(\frac{h_2(λ)h_2(E)}{t{r_E}({r_E}+1)} + \frac{c_2(λ)c_2(E)}{t{r_E}({r_E}-1)} - \frac{l(λ) |λ|c_1(E)^2}{2{r_E}}\right),
\end{equation}
which can be written as
\begin{equation}
      \frac{t\left(({r_E}-1)h_2(σ)-({r_E}+1)c_2(σ)\right)}{2{r_E}} \left(\frac{h_2(E)}{{r_E}({r_E}+1)} - \frac{c_2(E)}{{r_E}({r_E}+1)}\right),
\end{equation}
Finally by Lemma \ref{lem:combinatorics} we have
\begin{equation}
    \begin{split}
        \frac{({r_E}-1)h_2(σ)-({r_E}+1)c_2(σ)}{2{r_E}}  &= h_2(σ) - \frac{({r_E}+1)er_c^2}{2}\\
        &=\frac{\sum_i r_i r_{i+1}(r_{i+1}-r_i)}{2}\\
        &=\frac{t\left(e|σ| - \sum_i(2i-1)σ_i\right)}{{r_E}-1}\\
        &=\frac{e|λ| - \sum_i(2i-1)λ_i}{{r_E}-1}
    \end{split}
\end{equation}
This completes the proof.
\end{proof}

\begin{remark}\label{rem:abrelation}
    In general, there is a simple relation between the classes $B_{2,0}(λ,E)$ and $A_2(E)$. Namely we have
    \begin{equation}
        B_{2,0} - \frac{2({r_E}+1)}{{r_E}-1}A_2(λ)A_2(E) = \frac{c_1(λ)^2c_1(E)^2}{2{r_E}^2}.
    \end{equation}
\end{remark}

\begin{remark}
    The same calculation can be used to find the codegree 1 asymptotics of $B_i(E,kλ)$ in any Chow degree, when $λ=kσ$ for some $k\in\QQ$. Keeping to the same notation as in the proof, we have
    \begin{equation}
        \begin{split}
            B_m(E,kλ) & = k^mC_{λ,r}\frac{\sum_{|μ|=m}s_μ(λ)s_μ(E)}{\prod_{i=1}^l (r_E+μ_i-i)!}  \\
            &  + k^{m-1}C_{λ,r} \left(\frac{m\sum_{|μ|=m}s_μ(λ)s_μ(E)}{2t\prod_{i=1}^l (r_E+μ_i-i)!} - \frac{|λ|c_1(E)\sum_{|μ|=m-1}s_π(λ)s_π(E)}{2\prod_{i=1}^l (r_E+p_i-i)!}\right) \\
            &+ O(k^{m-2}),
        \end{split}
    \end{equation}
    for any $m\geq 2$.
\end{remark}


\chapter{K-stability of relative flag varieties}\label{chap:flags}
Fix the following notation. Let $E$ be a vector bundle of rank ${r_E}$ on a polarised smooth complex variety $(B,L)$ of dimension $b$, and $\Flag_r(E)$ the flag bundle of $r$-quotients of $E$ with projection $p$ onto $B$. Also fix an ample line bundle $\shL_λ(A) = \shL_λ\ot p^*A$ on $\Flag_r(E)$, where $λ$ is in $\shP(r)$ and $A$ is an ample line bundle on $B$.

In Section~\ref{sec:test} we construct a test configuration $(\Y_\shF,\scrL_λ(A))$ which we conjecture to be sufficient for detecting the K-instability of the flag bundle $(\Flag_r(E),\shL_λ(A))$ assuming that the base $B$ is stable.

From now on, we assume that $λ$ is in $\shP_{\diamond}(r)$. Section \ref{sec:curve} calculates the Donaldson-Futaki invariant of $\Y_\shF$ if we assume the base to be a curve.
\begin{theorem}\label{thm:DFC}
    Assume that $B$ is a curve, $E$ is ample and $F$ is a subbundle of $E$ whose degree is positive. There exists a test configuration $\Y_F$ for $(\Flag_r(E),\shL_λ(A))$ such that
    \begin{equation}
        \DF(\Y_{F},\shL_λ(A)) = C \left(μ_E - μ_F\right).
    \end{equation}
    for some positive constant $C$ depending on $E,F,g$ and $r$.
\end{theorem}

In Section \ref{sec:anybase} we outline a similar calculation for adiabatic polarisations on a flag bundle over a base of arbitrary dimension.
\begin{theorem}\label{thm:DFB}
    Assume that $\shF$ is a saturated torsion free subsheaf of $E$. Let $L$ be an ample line bundle on $B$ and assume that $A=L^m$. Then there exists an integer $m_0$ and a test configuration $\Y_\shF$ for $(\Flag_r(E),\shL_λ(L^m))$ such that for $m>m_0$ the Donaldson-Futaki invariant of $\Y_\shF$ is given by
    \begin{equation}
        \DF(\Y_\shF, \scrL_λ(L^m)) = C \left(μ_E - μ_\shF\right) \tfrac{1}{m} + O(\tfrac{1}{m^2})
    \end{equation}
    for some positive constant $C$ depending on $E,F,B$ and $r$.
\end{theorem}

These results immediately imply the stability statements of Theorem \ref{summaryA} and Theorem \ref{summaryB} from Section~\ref{sec:summary_of_results}.
\begin{theorem}[The K-instability statements of Theorem A]\label{thm:curve1}
    Assume that $B$ is a curve, $E$ is an ample vector bundle on $B$ and $A$ is ample. If $E$ is slope unstable and $λ$ is in $\shP_\diamond(r)$, then the flag bundle $(\Flag_r(E),\shL_λ(A))$ is K-unstable. If $E$ is not polystable, then the pair $(\Flag_r(E),\shL_λ(A))$ is not K-polystable.
\end{theorem}
\begin{proof}
    Fix a destabilising subsheaf $\shF$ of $E$ with maximal slope. The saturation, which by definition has a torsion free quotient, also destabilises. Torsion free coherent sheaves on a curve are locally free, so we may assume that $F$ is a subbundle. In particular $E/F$ is locally free. The claim then follows from Theorem \ref{thm:DFC}.

    To prove the second assertion, let $F$ be a subbundle of $E$ with maximal slope such that $μ(F) = μ(E)$ and assume that $F$ is not a direct summand. The scheme $\Y_F$ is smooth, so in particular it is normal. It follows that the test configuration is almost trivial only if it the total space $\Flag_r(\shE)$ is isomorphic to $\Flag_r(E)\times\AA^1$ \cite{stoppa2011note}. The two schemes $\Flag_r(E)$ and $\Flag_r(F\oplus E/F)$ are not isomorphic since it is possible to construct an isomorphism of underlying vector bundles from an isomorphism of flag bundles which preserves the polarisation. Therefore the bundle $\Flag_r(E)$ is not K-stable.
\end{proof}

\begin{theorem}[Theorem B]\label{thm:B}
    If $E$ is slope unstable and $λ$ is in $\shP_\diamond(r)$, then there exists an $m_0$ such that the flag variety $\Flag_r(E)$ of $r$-flags of quotients in $E$ with the polarisation $\shL_λ(L^m)$ is K-unstable for $m>m_0$.
\end{theorem}

\begin{proof}
    Follows immediately from Theorem \ref{thm:DFB}.
\end{proof}

An identical argument to \cite[Proposition 5.25]{RossThomas} which will not be repeated here shows the following instability result which is also discussed in Example \ref{ex:RTPB}.
\begin{proposition}\label{prop:flagbase}
    If the base $(B,L)$ is \emph{strictly slope unstable} in the sense of \cite[Definition 3.8]{RossThomas}, then there exists an $m_0>0$ such that $\left(\Flag_r(E),\shL_λ(L^m)\right)$ is K-slope unstable for $m>m_0$.
\end{proposition}

\section{Simple test configurations on flag bundles} 
\label{sec:test}
In this section we define the relative test configuration $(\Y_\shF,\scrL_λ(A))$. First, recall the following standard construction.
\begin{definition}[The extension group of a coherent sheaf]
    Let $\shF$ and $\shQ$ be coherent sheaves on $B$ and let $p_1\colon B\times \AA^1\rightarrow B$ be the first projection. An \emph{extension} of $\shQ$ by $\shF$ is a coherent sheaf $\shE'$ together with maps of $\shO_B$-modules which fit the short exact sequence
    \begin{equation}
        0\rightarrow \shF \rightarrow \shE' \rightarrow \shQ \rightarrow 0.
    \end{equation}
    Extensions are parametrised by the vector space $\shV=\Ext^1(B,\shQ,\shF)$ and there is a universal extension $\shU$ on $B\times \shV$ whose fibres are the corresponding extensions $\shE'$. The sheaf $\shU$ is naturally $\CC^\times$-equivariant for the scaling action on $B\times \shV$ which acts trivially on $B$.
\end{definition}
Consider the reverse point of view where $E$ is a fixed vector bundle fitting an exact sequence
\begin{equation}
    0\rightarrow \shF \rightarrow E \rightarrow \shQ \rightarrow 0.
\end{equation}
\begin{remark}[Turning off an extension]\label{rem:ext}
    Let $E$ be a locally free sheaf on $B$ and $\shF$ a quasicoherent subsheaf of $E$ with quotient $\shQ$. We abuse notation by writing $p_1^*E$ as
    $E[t]$ (we tacitly identify the algebra $\kk[t]$ with the associated sheaf on $\AA^1$), and identify $\extE$ as the subsheaf
    \begin{equation}
        \extE = p_1^*\shF + tp_1^* E \subset p_1^*E = E[t].
    \end{equation}
    The sheaf $\extE$ is naturally isomorphic to the pullback of the universal extension under the inclusion
    \begin{equation}
        B\times\AA^1\rightarrow B\times \Ext^1(B,\shQ,\shF).
    \end{equation}

    There is a natural $\GG_m$-linearisation on $\extE$ of the standard $\GG_m$-action on $B\times \AA^1$.  The fibre over $s\in\AA^1$ of the sheaf $\extE$ is given by
    \begin{equation}
            \frac{\extE}{(t-s)\extE}\cong \begin{cases} E &\mbox{if } s \neq 0 \\
            \shF\oplus \shQ & \mbox{if } s=0. \end{cases}
    \end{equation}
    In particular, the fibre of $\shE$ over $s=0$ is fixed by the $\GG_m$-action, and so are all the fibres of $\shF\oplus \shQ$ over $B\times \{ 0 \}$, so the linearisation is determined by a simple scaling action on the sections. Over the central fibre a section over an open set $U\subset B$ can be written as
    \begin{equation}
        σ=f+te + t\extE(U)\in \frac{\extE}{t\extE}\left(U\right)
    \end{equation}
    Therefore we can write $σ$ uniquely as $f+t\left(e+\shF(U)\right) + t^2E(U)$. The scaling action on $\AA^1$ acts on the section $t$ with weight $-1$.
\end{remark}
We may renormalise the natural $\GG_m$-linearisation on $\extE$ to scale sections of $\shF$ with weight 1 and sections of $\shQ$ with weight 0 over the central fibre. By Lemma \ref{lem:induceG}, we have an induced $\GG_m$-action on the relative flag scheme
\begin{equation}
    \Flag_r(\extE) = \shProj_{B\times\AA^1} S_λ(\extE)
\end{equation}
with a natural linearisation on the Serre line bundle which we denote by $\scrL_λ$. The central fibre is isomorphic to $\Flag_r(\shF\oplus\shQ)$.

Let $\scrL_λ$ be the line bundle on $\Y_\shF=\Flag_r(\extE)$ corresponding to a partition $λ\in\shP(r)$. The $\GG_m$-action on $E$ induces a linearised action on $(\Y_\shF,\scrL_λ)$. We extend this action trivially to any line bundle $\scrL_λ(f^*A)$, where $A\in\Pic B$ and $f\colon B\times\AA^1 \rightarrow B$ is the projection. We will abuse notation by writing this line bundle simply as $\scrL_λ(A)$.

\begin{claim}\label{cl:2}
    Assume that $B$ is a curve, $E$ is an ample vector bundle on $B$ and $A$ is an ample line bundle on $B$. Let $F$ be a subbundle of $E$ of positive degree and maximal slope with quotient $Q$. Then $(\Y_F,\scrL_λ(A),ρ)$ is a test configuration for $(\Flag_r(E),\shL_λ)$.
\end{claim}
\begin{proof}
It suffices to show that the polarisation $\scrL_λ(A)$ is ample over the central fibre. Since $E$ is ample, we may assume that $A=\shO_B$. By Proposition \ref{prop:schurpos} it suffices to show that $F\oplus Q$ is ample.

The bundle $E/F$ is ample since it is a quotient of an ample bundle. The subbundle $F$ has positive degree and it is stable so it is ample by \cite[Section 2]{Hartshorne1971}. Therefore the Schur power $(F\oplus Q)^λ$ is ample by Proposition \ref{prop:schurpos}, which proves the claim.
\end{proof}
\begin{remark}
    We fully expect the statement of Claim \ref{cl:2} to be true if $F$ is as above and we only assume $\shL_λ(A)$ to be ample.
\end{remark}
\begin{claim}\label{cl:1}
    Let $L$ be an ample line bundle on $B$. Then the pair $(\Y_F,\scrL_λ(L^m),\rho)$ is a test configuration for $m\gg 0$.
\end{claim}
\begin{proof}
    This follows immediately from \cite[Proposition 7.10]{hartshorne1977algebraic}.
\end{proof}
We call the $\GG_m$-linearised pair $(\Flag_r(E),\scrL_λ(A))$ the \emph{simple test configuration induced by $\shF$}.

Assume that the scheme $(\Y_\shF,\scrL_λ(A))$ is a test configuration and let $h(k)$ and $w(k)$ be the Hilbert and weight polynomials. Let $p_1$ and $p_2$ be the two projection of the product $B\times \PP^1$ and define the vector bundle
\begin{equation}\label{eq:tildeE}
    \widetilde{E}=p_1^*F\otimes p_2^*\shO_{\PP^1}(1)\oplus p_1^*Q.
\end{equation}
We write the vector bundle $\widetilde{E}$ simply as $\widetilde{E}=F(1)\ot Q$.

\begin{lemma}\label{lem:weight}
    The weight function $w(k)$ of the action $ρ$ and the Hilbert function $h(k) = h^0(\Flag_r(E),\shL(A)^k)$ satisfy the identity
    \begin{equation}
         w(k) + h(k) =  χ(B\times\PP^1,\widetilde{E}^λ\otimes p_1^*A).
    \end{equation}
\end{lemma}
\begin{proof}
    Assume first of all that $A=\shO_B$. By the Littlewood-Richardson rule (see \cite[(2.3.1) Proposition]{Weyman}) we have the decomposition
    \begin{equation}\label{eq:LWR}
        \widetilde{E}^λ=\bigoplus_{ν,μ}(F(1)^ν\otimes Q^μ)^{\oplus M^λ_{νμ}},
    \end{equation}
    where the sum is over all partitions $\nu$ and $\mu$ whose sizes sum up to the size of $\lambda$ and the coefficient $M^λ_{\nu,\mu}$ is the \emph{Littlewood-Richardson coefficient}. Using the K\"{u}nneth formula, Riemann-Roch on $\PP^1$ and additivity of the Euler characteristic we see that
\begin{equation}
    \begin{split}
            \chi(B\times\PP^1,\widetilde{E}^λ) &= \sum_{\nu,\mu,\lambda} M^λ_{\nu,\mu} \chi(B\times\PP^1,F^\nu\otimes Q^\mu \otimes \shO_{\PP^1}(|\nu|))\\
            &=\sum_{\nu,\mu,\lambda} (|\nu| + 1) M_{ν,μ}^λ χ(B,F^\nu\otimes Q^\mu)\\
            &=\chi(B,E^λ) + \sum_{|\nu|+|\mu|=|\lambda|} |\nu| χ(B,(F^\nu\otimes Q^\mu)^{\oplus M^λ_{\nu,\mu}}).
    \end{split}
\end{equation}
Assuming that the vector bundles $\widetilde{E}^λ$ and $E^λ$ are ample, the weight $w(k)$ is given by
\begin{equation}
    \begin{split}
            w(k) &=  \sum_{|\nu|+|\mu|=|\lambda|} |\nu| h^0\left(B,(F^\nu\otimes Q^\mu)^{\oplus M^λ_{\nu,\mu}}\right).
    \end{split}
\end{equation}
Finally, the calculation works verbatim if the bundle $A$ is nontrivial.
\end{proof}

Using Lemma \ref{lem:weight} we can calculate both the Hilbert and the weight polynomials using the Hirzebruch-Riemann-Roch formula. For the former, we have
\begin{equation}\label{eq:rrh}
    h(k) =\int_B \mathrm{ch}(E^{kλ}) \ch (A) \mathrm{Td}_B,
\end{equation}
and similarly for the latter, we have
\begin{equation}\label{eq:rrw}
    w(k) = \int_{B×\PP^1} \mathrm{ch}(\widetilde{E}^{kλ})\ch (A) \mathrm{Td}_{B×\PP^1} - h(k).
\end{equation}
There exist integers $a_0,a_1,b_0$ and $b_1$ so that we can write
\begin{equation}\label{eqh}
    χ(B,E^{kλ}) = \rank E^{kλ} \left(a_0k^b+a_1k^{b-1}+O(k^{b-2})\right)
\end{equation}
and
\begin{equation}\label{eqhh}
    χ(B×\PP^1,\widetilde{E}^{kλ}) = \rank E^{kλ} \left( b_0k^{b + 1}+b_1k^{b}+O(k^{b-1})\right).
\end{equation}
The common factor cancels and we get
\begin{equation}\label{eq:futaki}
    \DF\left(\Y_\shF,\scrL_λ(A)\right)=\frac{b_0a_1-b_1a_0 + a_0^2}{a_0^2}
\end{equation}
for the Donaldson-Futaki invariant.

The Chern classes of the twisted bundle $\widetilde{E}$ appearing in Equations~\eqref{eqh} and \eqref{eqhh} are given by the following Lemma.
\begin{lemma}\label{lem:intersections}
    Let $\widetilde{E}$ be the vector bundle defined in Equation~\eqref{eq:tildeE} and $\mathbf{h}$ is the fibre of a point under $p_2$. We have
    \begin{equation}
        \begin{split}
            h_2(\widetilde{E}) &= r_Fp_1^*c_1(E)\mathbf{h} + p_1^*c_1(F)\mathbf{h} + p_1^*h_2(E) + \frac{r_F(r_F+1)\mathbf{h}^2}{2}\\
            c_2(\widetilde{E}) &=  r_Fp_1^*c_1(E)\mathbf{h} - p_1^*c_1(F)\mathbf{h} + p_1^*c_2(E) + \frac{r_F(r_F-1)\mathbf{h}^2}{2}\\
            c_1(\widetilde{E}) &= p_1^*c_1(E) + r_F\mathbf{h}\\
            A_2(\widetilde{E}) &= - \frac{r_F}{r_E+1} \left(\frac{p_1^*c_1(E)\mathbf{h}}{r_E}-\frac{p_1^*c_1(F)\mathbf{h}}{r_F}\right) + Z
        \end{split}
    \end{equation}
    where $Z$ is contained in the image of $p_1^*$ and the class $A_2(\widetilde{E})$ is defined in Lemma \ref{lem:sym_chern}.
\end{lemma}
\begin{proof}
    The proposition follows by direct computation from the Whitney sum formula \cite[Theorem 3.2]{Fulton1998} and the general fact that we have
    \begin{equation}
        c_k(\shF\otimes L) = \sum_{j=0}^k\binom{r-i+j}{j}c_{k-j}(\shF)c_1(L)^j
    \end{equation}
    for any locally free sheaf $\shF$ and line bundle $L$ \cite[Example 3.2.2]{Fulton1998}. Alternatively, one may get the result using the splitting principle.
\end{proof}

\begin{remark}[Optimal test configurations]\label{rem:optimal}
Before proceeding with the proofs of Theorems \ref{thm:DFC} and \ref{thm:DFB}, we make a naive but natural conjecture to make about the optimality of the test configuration $\Y_\shF$. Assume that $B$ is K-stable and $\shF$ has maximal slope in the set of torsion free subsheaves of $E$. We conjecture that the test configuration $\Y_\shF$ is a \emph{maximally destabilising} test configuration of $(\Flag_r(E),\shL_λ(A))$ in the sense that the quantity $\frac{\DF(\Y)}{\|\Y\|}$ is bounded below by $\frac{\DF(\Y_\shF)}{\Y_\shF}$.

Optimality of test configurations in this sense was studied by Székelyhidi in the case of toric varieties \cite{szekelyhidi2008optimal}. The difficulty in the general case stems from the difficulty of parametrising the collection of test configurations, which is a partial motivation for our work on filtrations in Chapter \ref{chap:filtrations}.
\end{remark}

\section{Flag variety over a curve}\label{sec:curve} 
The aim of this section is to prove Theorem \ref{thm:DFC}.
\begin{proof}[Proof of Theorem \ref{thm:DFC}]
Let $B$ be a curve. Let $F$ be a subbundle of $E$ and $A$ a line bundle on $B$ such that the polarised scheme $(\Y,\scrL_λ(A))$, where $\Y = \Flag_r(\extE)$, is a test configuration for $(\Flag_r(E),\shL(A))$. We may assume that $\widetilde{E}^λ\ot A$ is ample, since twisting by the pullback $\shO_{\PP^1}(1)$ leaves Equation~\eqref{eq:futaki} invariant. We will show that the Donaldson-Futaki invariant of the test configuration $\left(\Y,\scrL_λ(π^*A)\right)$ satisfies
\begin{equation}
    \DF(\Y) = C_{g,E,A,λ}(μ_E-μ_F),
\end{equation}
where $C$ is a positive number depending on $B,A,E,F$ and $λ$.
By Riemann-Roch the Hilbert polynomial of $\shL_λ^k(A)$ satisfies
\begin{equation}
    χ(\Flag_r(E),\shL^k)=\rank E^{kλ}\left(a_0k + a_1 \right),
\end{equation}
where
\begin{equation}
    \begin{split}
        a_0&=c_1(λ)μ_E + μ_A,\\
        a_1&=1-g.
    \end{split}
\end{equation}
Using the Riemann-Roch formula on $B\times\PP^1$, we can write
\begin{equation}\label{eq:RRC}
     \chi(B,E^\lambda\otimes L^{mk}) =\int_{B\times\PP^1} {r_E}^{kc_1(A)} \ch(\widetilde{E}^{kλ}) \mathrm{Td}_{B\times\PP^1}.
\end{equation}
By Theorem \ref{thm:maina} we have
\begin{equation}
    h^0(B\times \PP^1,\widetilde E^{kλ})=\rank E^{kλ} (b_0k^2+b_1 k + O(1)),
\end{equation}
where denoted
\begin{equation}\label{eq:b0eq}
        b_0=\frac{h_2(λ)h_2(\widetilde E)}{r_E(r_E+1)}+\frac{c_2(λ)c_2(\widetilde E)}{r_E(r_E-1)} + \frac{c_1(λ)}{r_E}c_1(\widetilde{E}).c_1(A)
\end{equation}
and
\begin{equation}\label{eq:b1eq}
    b_1=H_λA_2(\widetilde E) - \frac{c_1(λ)c_1(\widetilde E).K_{B\times \PP^1}}{2r_E} - \frac{c_1(A).K_{B\times\PP^1}}{2}.
\end{equation}
Here the class $A_2(\widetilde{E})$ is defined in Equation~\eqref{eqaa} and we write
\begin{equation}
    H_λ=\frac{r_Ec_1(λ) - \sum_{i=1}^{r_c}(2i-1)λ_i}{r_E-1}.
\end{equation}
Let $\mathbf{g}$ and $\mathbf{h}$ be the two fibres of the first and second projection of the product $B\times\PP^1$, respectively. The intersection matrix with respect to this basis is
\begin{equation}
    \left( \begin{array}{cc}
    0 & 1  \\
    1 & 0  \end{array} \right).
\end{equation}
As a special case of Lemma \ref{lem:intersections} we have
\begin{equation}
    c_1(\widetilde{E})^2 = 2{r_F}{r_E}μ_E.
\end{equation}
Calculating the intersection classes appearing in Equations~\eqref{eq:b0eq} and \eqref{eq:b1eq} gives
\begin{equation}
    \begin{split}
        -\frac{c_1(\widetilde E).K_{B\times\PP^1}}{2}&=(f\mathbf{h}+(r_Eμ_E) \mathbf{g}).(\mathbf{h}+(1-g)\mathbf{g})\\
        &=r_Eμ_E + \frac{(1-g)c_1(\widetilde E)^2}{2r_Eμ_E},\\
        -\frac{c_1(A).K_{B\times\PP^1}}{2} &= μ_A\mathbf{g}.(\mathbf{h} + (1-g)\mathbf{g}) =  μ_A, \mbox{ and}\\
        c_1(\widetilde{E}).c_1(A) &= {r_F}μ_A.
    \end{split}
\end{equation}
Let $y=(y_1,\ldots,y_l)$ be variables. For any such $y$ define the symmetric polynomial
\begin{equation}
    A_2(y) = \frac{{r_E}-1}{2}\left(\frac{h_2(y)}{{r_E}({r_E}+1)}-\frac{c_2(y)}{{r_E}({r_E}-1)}\right).
\end{equation}
Using the above calculations and Remark \ref{rem:abrelation} we then have
\begin{equation}\label{eq:CW}
    \begin{split}
        b_0&=\frac{2({r_E}+1)}{{r_E}-1}A_2(λ)A_2(\widetilde E) + \frac{c_1(λ)^2c_1(\widetilde E)^2}{2r_E^2} + \frac{c_1(λ){r_F}μ_A}{r_E},\\
        b_1&=H_λA_2(\widetilde E) + a_0 + \frac{(1-g)c_1(λ)c_1^2(\widetilde E)}{2r_E^2μ_E},
    \end{split}
\end{equation}
By direct calculation, and Lemma \ref{lem:intersections} the Donaldson-Futaki invariant defined in Equation~\eqref{eq:futaki} is given by
\begin{equation}
    \begin{split}
        \DF(\Y) &= \left(a_1b_0 - a_0b_1 + a_0^2\right) / a_0^2\\
        &= C_{g,E,A,λ}(μ_E-μ_F),
    \end{split}
\end{equation}
where the constant $C_{g,E,A,λ}$ is given by
\begin{equation}\label{eq:Cpos}
    C_{g,E,A,λ}=\frac{r_F}{(r_E+1)\left(c_1(λ)μ_E +μ_A\right)^2} \left(H_λ\left(c_1(λ)μ_E + μ_A\right)+\frac{2(g-1)(r_E+1)A_2(λ)}{{r_E}-1}\right).
\end{equation}
We are left to verify that the constant $C_{g,E,A,λ}$ is positive. For $g\geq 1$, it suffices to show that $H_λ$ and $A_2(λ)$ are positive since $c_1(λ)μ_E+μ_A$ is positive as $\shL_λ(A)$ is ample.

Using $r_E-1\geq r_c$ and recalling that $r_c$ is the length of $λ$, we have
\begin{equation}
    \begin{split}
        (r_E+1)r_E(r_E-1)A_2(λ) &= (r_E-1)c_1(λ)^2 - 2r_Ec_2(λ)\\
        &= (r_E-1)\sum_{i=1}^{r_c}λ_i^2-2\sum_{1\leq i< j\leq r_c}λ_iλ_j\\
        &\geq\sum_{1\leq i < j\leq r_c}(λ_i-λ_j)^2 >0.
    \end{split}
\end{equation}
We have
\begin{equation}
     \sum_{i=1}^l(2i-1)λ_i = \sum_{j=1}^{c} (λ_i')^2,
\end{equation}
where $λ'$ denotes the conjugate partition of $λ$. To see that the first term of Equation~\eqref{eq:Cpos} is positive, notice that
\begin{equation}
    ec_1(λ)-\sum_i(2i-1)λ_i =  \sum_{j=1}^{s} λ'_i(r_E-λ'_i)>0,
\end{equation}
which is positive since $r_E>r_c\geqλ'_i$ for all $i$. Hence $C_{g,E,A,λ}>0$ for all $g\geq1$. A similar calculation shows that $C_{0,E,A,λ}$ is positive.
\end{proof}

\section{Flag variety over a base of higher dimension}\label{sec:anybase}

Our aim is to prove Theorem \ref{thm:DFB}. We proceed in two stages. First, we assume for simplicity that the test configuration is induced by a subsheaf of $E$. Finally, we use Proposition \ref{prop:cohsub} that this can be done without loss of generality.

\begin{proof}[Proof of Theorem \ref{thm:DFB}]
By Proposition \ref{prop:cohsub} we may assume that $F$ is a subbundle. We will show that the leading term in $m$ in the Donaldson-Futaki invariant of the test configuration $(\Y,\scrL_λ(p_1^*L^m))$
is
\begin{equation}\label{eq:Ddef}
    D_{E,λ,L,r_F}(μ(E)-μ(F)),
\end{equation}
where $D_{E,λ,L,r_F}$ is a positive number depending on $B,L,E,F$ and $λ$. Here $p_1$ is the first projection from $B\times \AA^1$. Expand the Chern character of $E^{kλ}$ as
\begin{equation}
    \ch E^{kλ} = \sum_{i=0}^b \ch_i E^{kλ}
\end{equation}
and the Todd class of $B$ as
\begin{equation}
    \operatorname{Todd}(B) = \sum_{i=0}^b \operatorname{Todd}_i(B).
\end{equation}
We then have
\begin{equation}\label{eqrrA}
    \begin{split}
             \chi(\Flag_r(E),\shL_λ(L^m)^{\otimes k}) &=  \chi(B,E^{kλ}\otimes L^{mk}) \\
             &= \int_B {r_E}^{mk\omega} \ch(E^{kλ}) \mathrm{Td}(B)  \\
            &=\frac{(m  k)^b}{b!}\omega^b\rank(E^{kλ})  \\
              +\frac{(m k)^{b-1}}{(b-1)!}\omega^{b-1}&\Big(\rank(E^{kλ})\frac{c_1(B)}{2} + \frac{kc_1(λ)c_1(E^λ)}{r_E} \Big) \\
              +\frac{(m k)^{b-2}}{(b-2)!}\omega^{b-2}\Big(\rank(E^{kλ})&\operatorname{Todd}_2(B) + \frac{kc_1(λ)c_1(E^λ).c_1(B)}{2r_E} + \ch_2(E^{kλ})\Big)\\
             + O(k^{b-3})&,
    \end{split}
\end{equation}
which follows from Riemann-Roch and the pushforward formula of Proposition \ref{prop:pushforward}. Here $\mathrm{Td}_2(B)$ is the second Todd class of $B$. Using Riemann-Roch on $B\times\PP^1$, we similarly compute the Hilbert polynomial of $\widetilde{E}^λ\otimes p_1^*L^m$, where $p_1$ is the first projection.

To apply Lemma \ref{lem:weight}, choose $m_0$ so that the bundle $E\otimes L^{\frac{m_0}{c_1(λ)}}$ is ample and assume from now on that $m>m_0$.

As in Section~\ref{sec:curve}, we write
\begin{equation}
    \begin{split}
        h^0(B,E^{kλ}\otimes L^{mk}) &= \rank E^{kλ} \left(a_0 k^b+a_1 k^{b-1} + O(k^{b-2})\right),\\
        h^0(B\times \PP^1, \widetilde{E}^{kλ}\otimes L^{mk})&=\rank E^{kλ}\left(b_0 k^{b+1}+b_1 k^{b} + O(k^{b-1})\right).
    \end{split}
\end{equation}
Next, we expand the $a_i$ and the $b_i$ in powers of $m$ as
\begin{eqnarray}\label{eq:atr}
b_0&=&b_{0,0} m^{b} + b_{0,1} m^{b-1} +O(m^{b-2}),  \\
b_1&=&b_{1,0} m^{b} + b_{1,1} m^{b-1} +O(m^{b-2}), \\
a_0&=&a_{0,0} m^b + a_{0,1} m^{b-1} +O(m^{b-2}),\\
a_1&=&a_{1,0} m^b + a_{1,1} m^{b-1} +O(m^{b-2}).
\end{eqnarray}
Let $ω=c_1(L)$ and $\eta=p_1^*ω$. Using Theorem \ref{thm:maina} and equation \eqref{eqrrA}, we see that
\begin{eqnarray*}
            b_{0,0} &=& \frac{c_1(λ)}{ r_E\cdot b!}\eta^b.c_1(\widetilde E)\\
            b_{0,1} &=&  \frac{1}{(b-1)!}\eta^{b-1}.\left( \frac{h_2(λ)h_2(\widetilde E)}{{r_E}({r_E}+1)}+\frac{c_2(λ)c_2(\widetilde E)}{{r_E}({r_E}-1)} \right) \\
            b_{1,0} &=&  -\frac{ \eta^b.K_{B\times\PP^1}}{2\cdot b!} \\
            b_{1,1} &=&  \frac{1}{(b-1)!}\left(\eta^{b-1}.H_λA_2(\widetilde E)  -   \frac{c_1(λ)\eta^{b-1}.K_{B\times\PP^1}.c_1(\widetilde E)}{2r_E}\right) \\
            a_{0,0} &=&  \frac{\omega^{b}}{b!} = \frac{ \deg L}{b!} \\
            a_{0,1} &=&  \frac{c_1(λ)}{r_E(b-1)!}\omega^{b-1}.c_1(E)  \\
            a_{1,0} &=& 0 \\
            a_{1,1} &=&- \frac{\omega^{b-1}.K_B}{2 (b-1)!} = -\frac{\deg K_B}{2 (b-1)!}.
\end{eqnarray*}
The proof of the following lemma is a straightforward calculation.
\begin{lemma}
    The intersection numbers appearing above are
    \begin{eqnarray*}
        \omega^{b}&=&\deg L \\
        \omega^{b-1}.c_1(E)&=&r_Eμ_E\\
        \omega^{b-1}.K_B&=&\deg K_B\\
        \eta^b.c_1(\widetilde E) &=& \deg L({r_E}\alpha +{r_E}) \\
        \eta^{b-1}.c_1(\widetilde E)^2 &=&  2 {r_F}{r_E}μ_E  \\
        \eta^{b-1}.c_2(\widetilde E) &=&  {r_F}{r_E}μ_E -{r_F}μ_F  \\
        \eta^{b-1}.K_{B\times\PP^1}.c_1(\widetilde E) &=& f\deg K_B - 2{r_E}μ_E \\
        \eta^{b}.K_{B\times\PP^1} &=& -2 \deg L \\
        \eta^b.A_2(\widetilde E) &=& \frac{{r_E}(μ_E-μ_F)}{{r_E}+1}.
    \end{eqnarray*}
\end{lemma}
We write Laurent expansion of the Donaldson-Futaki invariant in $m$
\begin{equation}
        \DF(\Y, \shL_{\shE,m},\rho) = F_0 + F_1m^{-1} + O(m^{-2}),
\end{equation}
where
\begin{equation}
            F_0 a_0^2 =  \underbrace{a_{1,0}b_{0,0}}_{=0} - a_{0,0}b_{1,0} + a_{0,0}^2 =- \left(\frac{\deg L}{b!}\right)^2 + \left(\frac{\deg L}{b!}\right)^2 = 0
\end{equation}
and
\begin{equation}
             F_1 a_0^2=  \underbrace{a_{1,0}b_{0,1}}_{=0} + a_{1,1}b_{0,0} - a_{0,1} b_{1,0} - b_{1,1}a_{0,0} + 2a_{0,0}a_{0,1}.
\end{equation}
An elementary calculation similar to the one we did in Section~\ref{sec:curve} shows that
\begin{equation}
    \mathrm{DF}(\Y, \scrL_λ(p_1^*L^m)) =D_{E,λ,L,{r_E}}(μ_E-μ_F)m^{-1} + O(m^{-2})
\end{equation}
where
\begin{equation}
    D_{E,λ,L,{r_E}} =\frac{r_FbH_λ}{({r_E}+1)\deg L}
\end{equation}
is a positive constant by the same argument as in Section~\ref{sec:curve}. Theorem \ref{thm:DFB} then follows from the following Proposition.
\end{proof}
\begin{proposition}\label{prop:cohsub}
    Using notation from Section \ref{sec:test}, let $\left( \Flag_r(\extE),\scrL_λ(L^m) \right)$ be a test configuration for $(\Flag_r(E),\shL_λ(L^m))$ where $\shF$ is a saturated torsion free subsheaf of $E$. Then the formula
    \begin{equation}
        \mathrm{DF}(\Flag_r(\extE), \scrL_λ(L^m)) =D_{E,λ,L,{r_E}}(μ_E-μ_\shF)m^{-1} + O(m^{-2})
    \end{equation}
    for the Donaldson-Futaki invariant still holds for $m\gg 0$.
\end{proposition}
\begin{proof}
    It follows that $E/\shF$ is also torsion free, and $\shF$ and $E/\shF$ are both locally free over an open subset $U$ whose complement is of dimension at least 2. The leading order terms in $m$ of $h(k)$ and $w(k)$ given in Equation~\eqref{eq:atr} only involve the first Chern classes of $\shF$ and $E/\shF$. But the first Chern classes can be computed over the open set $U$ where $F$ and $E/\shF$ are locally free. The Schur functor commutes with localisation, so Theorem \ref{thm:maina} holds for the restriction $\restr{\left(\shF\oplus E/\shF^λ\right)}{U}$. Therefore, we may assume without loss of generality that $\shF$ is a subbundle.
\end{proof}



\chapter{Uniformisation theorem for flag bundles over Riemann surfaces} 
\label{chap:UTF}
We show that there is a simple extension of the Uniformisation Theorem to flag varieties of polystable vector bundles over Riemann surfaces.

Throughout this chapter we let $C$ be a curve and denote its fundamental group by $Γ$ without reference to the choice of a base point. Let $\widehat{C}$ be the universal cover of $C$, which is one of the three model spaces given by the Uniformisation theorem. Let $π$ be the canonical projection $\widehat{C}\rightarrow C$ and $σ$ the covering action $\widehat{C}\timesΓ\rightarrow \widehat{C}$.

\begin{theorem}\label{thm:csck}
    Let $E$ be a polystable vector bundle on $C$ and let $\Flag_r(E)$ be a flag bundle of $E$ over $C$. All Kähler classes in $\Flag_r(E)$ are cscK. In particular, $\Flag_r(E)$ is K-semistable for all polarisations.
\end{theorem}
We obtain a partial Yau-Tian-Donaldson correspondence for flag bundles on high genus curves using Theorem \ref{thm:csck}.
\begin{theorem}\label{thm:ytdc}
    Let $(\Flag_r(E),\shL_λ(A))$ be a polarised flag bundle on $C$.

    If $E$ is polystable, the flag bundle $(\Flag_r(E),\shL_λ(A))$ is K-semistable. If $E$ is stable and $g\geq 2$, then the variety $(\Flag_r(E),\shL_λ(A))$ is K-stable.

     Finally, if $E$ is simple and $g\geq 2$, the YTD correspondence holds for any line bundle $\shL_λ(A)$ with $λ\in \shP_\diamond(r)$ and $A$ ample.
\end{theorem}
We prove the following Lemma in Section~\ref{sec:constant_scalar_curvature_kahler_metrics_on_flag_bundles}.
\begin{lemma}\label{lem:autF}
    If the vector bundle $E$ is simple and the genus satisfies $g\geq 2$, then the automorphism group of $\Flag_r(E)$ is discrete.
\end{lemma}
\begin{proof}[Proof of Theorem \ref{thm:ytdc}]
    The first statement follows directly from Theorem \ref{thm:csck} and Proposition \ref{prop:stab}.

    For the second statement, we also need Lemma \ref{lem:autF} and Proposition \ref{prop:CscKtoStab} which strengthens Proposition \ref{prop:stab} in the case of a discrete automorphism group.

    If $E$ is polystable, the final statement follows from the second statement. If $E$ is simple but not polystable, then we can construct a destabilising test configuration for $(\Flag_r(E),\shL_λ(A))$ by Theorem \ref{thm:curve1}.
\end{proof}

\begin{remark}
    In order to prove a full YTD correspondence on flag bundles over curves one would need to analyse the delicate cases when $\Flag_r(E)$ admits vector fields. By Equation \eqref{eq:rtes} and the preceding discussion we see that this may happen when the base curve $C$ is an elliptic curve and when $E$ is properly polystable, that is, isomorphic to a direct sum of stable vector bundles of equal slopes. If the base curve $C$ is isomorphic to $\PP^1$, Grothendieck's theorem states that any holomorphic vector bundle $E$ can be decomposed into a direct sum $\bigoplus_{i=1}^{r_E} \shO_{\PP^1}(m_i)$ for some $m_i\in \ZZ$ for $i=1,\ldots,r_E$ \cite{grothendieck1957classification}.
\end{remark}

\section{Construction of flag bundles from representations of the fundamental group} 
\label{sec:construction_of_flag_bundles_from_representations_of_the_fundamental_group}

Let $G$ be an algebraic group and $ρ\colonΓ\rightarrow G$ be a representation. We define the \emph{associated bundle with fibre $G$} \cite{kobayashi2014differential}
\begin{equation}\label{eq:FId}
    \EE_ρ =  \bigslant{\widehat{C}\times G}{Γ}
\end{equation}
by the identification
\begin{equation}
    (c,g)\sim(σ(γ,c), ρ(γ)g)
\end{equation}
for $(c,g)\in \widehat{C}\times G$ and $γ\in Γ$. The quotient space $\EE_ρ$ is an algebraic principal bundle over the curve $C$.

A representation $ρ\colonΓ\rightarrow\GL(e,\CC)$ determines a vector bundle $E_ρ$ by setting
\begin{equation}
    E_ρ = \bigslant{C\times \CC^{r_E}}{Γ}
\end{equation}
by the identification in Equation~\eqref{eq:FId} with $\GL(e,\CC)$ acting on $\CC^{r_E}$ in the usual way. The vector bundle $E_ρ$ and its associated frame bundle $\EE_ρ$ have natural Zariski trivial algebraic structures since the fibre of $\EE_ρ$ is $\GL({r_E},\CC)$ \cite{serre1958espaces}.

A \emph{locally trivial holomorphic fibration} with fibre $F$ is a holomorphic map $f\colon M\rightarrow M'$ of complex manifolds $M$ and $M'$ such that each point $x\in M'$ has an analytic neighborhood $U\subset M'$ such that the restriction of $f$ to $U$ is given by the first projection $U\times F\rightarrow U$.
\begin{theorem}\label{thm:unif}
    Suppose that $E$ is polystable vector bundle over a (complex, smooth, projective) curve $C$. Let $\bar{P}_r$ denote the image of the parabolic subgroup $P_r\subset \GL({r_E},\CC)$ in $\PGL({r_E},\CC)$. Then there exists representation $ρ\colonΓ\rightarrow \PGL({r_E},\CC)$ such that the holomorphic quotient map
    \begin{equation}\label{eq:FQuot}
          \bigslant{\widehat{C}\times \PGL(r,E)}{\bar{P}_r} \rightarrow \Flag_r(E)
    \end{equation}
    is a holomorphic locally trivial fibration with fibre $Γ$.
\end{theorem}

\begin{proof}[Proof of Theorem \ref{thm:unif}]
    Let $\EE$ be the frame bundle of $E$ and define the \emph{projectivised frame bundle}
    \begin{equation}
        \bar{\EE} \defeq \bigslant{\EE}{\GG_m},
    \end{equation}
    where $\GG_m$ acts via the inclusion
    \begin{equation}
        λ\mapsto λ I\in \GL({r_E},\CC)
    \end{equation}
    for $λ\in\GG_m$. By the Narasimhan-Seshadri Theorem \ref{prop:ns} there exists a representation $ρ:Γ\rightarrow \PGL({r_E},\CC)$ such that $\bar{\EE}$ is the associated bundle
    \begin{equation}
        \bar{\EE} = \bigslant{\big(\widehat{C} \times \PGL({r_E},\CC)\big)}{Γ}.
    \end{equation}
    of the representation $ρ$
    Since multiples of the identity matrix are contained in $P_r$ we can write
    \begin{equation}
        \Flag_r(E) = \bar{\EE} / \bar{P}_r.
    \end{equation}
    Hence the representation $ρ$ induces an action of $Γ$ on $\Flag_r(E)$. The double quotient
    \begin{equation}
        \widehat{C}\times \PGL({r_E},\CC) \longrightarrow \EE \longrightarrow \Flag_r(E)
    \end{equation}
    can be factorised in two ways. We define the map
    \begin{equation}
        \hat{π}:\widehat{C}\times \PGL({r_E},\CC)/\bar{P}_r \longrightarrow \Flag_r(E)
    \end{equation}
    by
    \begin{equation}
        (x,g\bar{P}_r)\mapsto \left(σ(Γ,x),ρ(Γ)g\bar{P}_r\right)\in\Flag_r(E).
    \end{equation}
    The map $\hat{π}$ fits into the diagram
    \[
    \begin{tikzcd}
    \widehat{C}\times \PGL({r_E},\CC) \arrow{r}{} \arrow[swap]{d} & \EE \arrow{d} \\
    \widehat{C} \times \PGL({r_E},\CC)/\bar{P}_r \arrow{r}{\hat{π}} & \Flag_r(E)
    \end{tikzcd}
    \]
    and is a locally trivial holomorphic fibration with fibre $Γ$, since $π$ is.
\end{proof}

\section{Constant scalar curvature Kähler metrics on flag bundles and K-polystability} 
\label{sec:constant_scalar_curvature_kahler_metrics_on_flag_bundles}

We begin with a proof of Theorem \ref{thm:csck}, then turn to the proof of Lemma \ref{lem:autF}.
\begin{proof}[Proof of Theorem \ref{thm:csck}]
    Let $G$ denote the group $\PGL({r_E},\CC)$. The Picard group of $\Flag_r(E)$ is generated by line bundles of the form $\shL_λ(A)$ where $λ$ is in $\shP(r)$ and $A$ is a line bundle on $C$ by Lemma \ref{lem:FPic}.

    Fix a line bundle $M=\shL_λ\otimes A$ with $A\in \Pic C$ and $λ\in\shP(λ)$. Let
     \begin{equation}
        π \colon \widehat{C}\times G/P_r\rightarrow \Flag_r(E)
     \end{equation}
    be the projection constructed in Theorem \ref{thm:unif}.

    There is a Kähler-Einstein (hence cscK) metric $ω_0$ in $c_1(\shL_λ)$, unique up to the action of $G$, by results of Koszul and Matsushima \cite{alekseevskii1986invariant}. Let $s_0$ be the (constant) scalar curvature of $ω_0$. Let $ω_A$ be a constant scalar curvature metric such that $2π[ω_A] = c_1(A)$ with scalar curvature $s_1$ and let $ω_1$ be the pullback to $\widehat{C}$. Since $ω_0+ω_1$ is $Γ$-invariant, it descends to a form $ω$ on $\Flag_r(E)$ with constant scalar curvature $s_0 + s_1$.
\end{proof}

Let $V$ be a complex vector space of dimension ${r_E}$. In order to apply a classical result of Demazure, we regard $\Flag_r(V)$ as a quotient of $\PGL(r,V)$. Let $Q_r$ be the image of a stabiliser of an $r$-flag of subspaces in $\PSL({r_E},\CC)$ and let $\mathfrak{q}_r$ be its Lie algebra. Also let $\mathfrak{psl}({r_E},\CC)$ denote the Lie algebra of $\PSL({r_E},\CC)$. We have a well known exact sequence
\begin{equation}
    0\lra  (\PSL({r_E},\CC) \times \mathfrak{q}_r)/Q_r \lra \PSL({r_E},\CC)/Q_r \times \mathfrak{psl}({r_E},\CC)\lra
    \shT_{\Flag_r(V)}\lra 0.
\end{equation}
where $Q_r$ acts on $\mathfrak{q}_r$ by the adjoint action and $\shT_{\Flag_r(V)}$ is the tangent bundle.

It follows from results of Demazure and Bott \cite[Section 4.8]{akhiezer1995lie} that we have
\begin{equation}\label{eq:dem}
    H^i\left(\Flag_r(V), \shT_{\Flag_r(V)}\right) =
    \begin{cases} \mathfrak{psl}({r_E},\CC), \text{ if } i=0  \\
                    0, \text{ otherwise.}
    \end{cases}
\end{equation}

Let $p:\Flag_r(E)\rightarrow C$ be the projection. Since $\Flag_r(E)$ is Zariski locally trivial on $C$, this generalises in a straightforward manner. Let $h$ be a hermitian metric on $E$ and let $\End^0(E)$ denote the sheaf of trace-free endomorphisms on $E$. Let $U$ be a Zariski open set in $C$ such that
\begin{equation}
    \Flag_r(E)\cong U \times \Flag_r(V).
\end{equation}
We have a natural identification
\begin{equation}
    \restr{\left(\shEnd^0(E)/\CC \right)}{U} \cong \restr{\shO_B}{U}\ot \mathfrak{psl}({r_E},\CC),
\end{equation}
where the $\CC$ denotes the constant sheaf included in $\shEnd^0(E)$ as multiples of the identity. Let  $\shV_{\Flag_r(E)}$ denote the relative tangent bundle of $\Flag_r(E)$ with respect to the projection $p$. We obtain from Equation~\eqref{eq:dem}
\begin{equation}
    R^ip_* \shV_{\Flag_r(E)} =
    \begin{cases} \shEnd^0(E)/\CC \text{ if } i=0 \text{ and} \\
                    0 \text{ otherwise,}
    \end{cases}
\end{equation}

\begin{proof}[Proof of Lemma \ref{lem:autF}]
    We must show that the vector space $H^0(\Flag_r(E),\shT_{\Flag_r(E)})$ is trivial. We have the exact sequence
    \begin{equation}\label{eq:rtes}
        0\longrightarrow \shV_{\Flag_r(E)} \longrightarrow \shT_{\Flag_r(E)} \longrightarrow p^*\shT_C\longrightarrow 0
    \end{equation}
    where $\shT_C$ is the tangent bundle of the curve $C$. It suffices to show that $H^0(\Flag_r(E),\shV_{\Flag_r(E)}) = 0$ since $H^0(C,\shT_C) = 0$ as the genus $g(C)$ satisfies $g(C) > 1$. The vector bundle $E$ is simple, therefore we have $H^0(C,\shEnd(E)) = \CC \cdot \mathrm{Id}_E$. The claim follows by identifying $H^0(C,\shEnd^0(E))$ as a subspace of $H^0(C,\shEnd(E))$.
\end{proof}



\chapter{K-stability of complete intersections}\label{chap:sub} 
The objective of this chapter is to provide additional examples of K-unstable varieties. We describe a situation in which the Donaldson-Futaki invariant of a complete intersection can be calculated. In Section~\ref{sec:koszul} and apply the result in the case of flag bundles in Section~\ref{sec:subvarieties_in_flag_varieties}.

The idea is to fix a complete intersection $X$ in a polarised variety $Y$ and a test configuration $\Y$ for $Y$. Consider then the Zariski closure of the orbit of $X$ in $\Y$ under the $\GG_m$-action. The scheme $\X$ is a test configuration for $X$ and its Donaldson-Futaki invariant depends, a priori, on the test configuration $\Y$ in a complicated way. However, in some favourable situations the Donaldson-Futaki invariant of $\X$ is related to the Donaldson-Futaki invariant of $\Y$ and topological data of $X$ in $Y$. Examples of this behaviour have been given by Stoppa-Tenni \cite{Stoppa2010} and Arezzo-Della Vedova \cite{arezzo2011k}.

The main result of this chapter is a generalisation of an example in \cite{Stoppa2010}.
\begin{theorem}[A simple limit for high genus curves]\label{thm:simplelimit}
    Let $E$ be an ample vector bundle of rank $r_E$ on a curve, and $F$ is a subbundle of $E$ of rank $r_{F}$. Assume that
    \begin{equation}
        (\Y,\scrL)=\left(\Flag_r(\extE),\shL_λ\right)
    \end{equation}
    is a test configuration for $(\Flag_r(E),\shL_λ)$ as defined in Chapter \ref{chap:flags}, and that $λ$ is in $\shP_\diamond(r)$. Let $X$ be a generic complete intersection in $\Flag_r(E)$ of codimension less than the integer $N_{λ,r_E,r_F}$ defined in Equation~\eqref{eq:lws}. Then the Donaldson-Futaki invariant of the test configuration $\X$, defined as the closure of the orbit of $X$ in $\Y$, is given by
    \begin{equation}\label{eq:futlim}
        \DF(\X) = D\left(C_E\deg E + C_F\deg F \right)g + O(g^0),
    \end{equation}
    where $D$ is a positive number and $C_E$ and $C_F$ are given in Equation~\eqref{eq:CECF}. All three numbers depend only on $\deg E,\deg F$, the codimension $u$ of $X$ and $λ$.
\end{theorem}
We may easily construct examples of K-unstable complete intersections in flag bundles over curves using Theorem \ref{thm:simplelimit}. The simplest such construction is due to Stoppa and Tenni.

Fix a positive integer $d$ and let $C(g)$ be a sequence of $d$-gonal curves of genus $g$ for all integers $g$ larger than 2, and let $L_g$ be a degree $d$ line bundle on $C(g)$. Let
\begin{align*}
    F_g=L_g \text{ and } &  E_g = \shO_{C(g)}^{\oplus r_E-1}\oplus L_g.
\end{align*}
With these choices $\deg E_g$ and $\deg F_g$ are bounded as functions of $g$ and the final term in Equation \eqref{eq:futlim} is under control. The vector bundle $E_g$ is only globally generated but we may find a test configuration for an ample polarisation on $X$ whose Donaldson-Futaki invariant is arbitrarily close to the one given by Equation~\eqref{eq:futlim} when applied to the globally generated vector bundle $E_g$. We do this by replacing the vector bundle $E_g$ with $E_g\ot A^{\frac{ε}{|λ|}}$, where $A$ is an ample line bundle on $C(g)$. Finally, we use the following Lemma which follows directly from calculations done in Sections~\ref{sec:curve} and \ref{sec:koszul}.
\begin{lemma}\label{lem:futcont}
    The Donaldson-Futaki invariant of $\left(\Y_F,\scrL_λ(εA)\right)$ is continuous in $ε$.
\end{lemma}
Using Lemma \ref{lem:futcont} and simple combinatorics outlined in Section~\ref{sec:subvarieties_in_flag_varieties} we obtain the following new examples of K-unstable varieties.
\begin{theorem}[Theorem \ref{summaryD}]\label{thm:grEx}
    Let $Y$ be the Grassmannian of $p$-dimensional quotients of $E_g$ with the polarisation $\shL_λ(εA)$, where $λ=(1^p)$. Let $s$ be a positive integer.

    Then there exists numbers $ε_0>0$ and $g_0>0$ such that a general hypersurface $H$ in $Y$ which is a defined by a section of a multiple of $s\left(\shL_λ(εA)\right)$ with the polarisation $\restr{\shL_λ(εA)}{H}$ is K-unstable for all $ε<ε_0$ and $g>g_0$.
\end{theorem}
We may also ask for $H$ to be smooth in the statement of Theorem \ref{thm:grEx} by Bertini's theorem \cite[Theorem II.8.18]{hartshorne1977algebraic}.
\begin{proposition}\label{prop:gentype}
    For $s>e$ the hypersurface $H$ is of general type.
\end{proposition}
\begin{proof}
    We prove that $K_H$ is ample. This follows directly from the adjunction formula \cite[Example 3.2.12]{Fulton1998}. In the notation of Theorem \ref{thm:grEx}, we have
    \begin{equation}
        K_H = \restr{\left(\shL_{-σ} + K_{C(g)} + s \shL_λ(εA)\right)}{H},
    \end{equation}
    where $σ$ is the partition $({r_E}^p)$. The statement then follows from Remark \ref{rem:SMult} and the preceding discussion.
\end{proof}

\section{The Donaldson-Futaki invariant of a complete intersection} 
\label{sec:koszul}
Let $ρ$ be $\GG_m$-action on a polarised variety $(Y,L)$ of dimension $n$ and let $φ_i$ be sections of $H^0(Y,L^{s_i})$ for $1\leq i\leq u$. Let $γ$ be an integer, and assume that the natural representation of $ρ$ on $H^0(Y,L^{s_i})$ acts on $φ_i$ by $t.φ_i = t^{γs_i}φ_i$ for all $i$ and $t\in\GG_m$. Denote the complete intersection of $φ_1,\dotsc,φ_u$ by $X$. The $\GG_m$-action determines a product test configurations $\Y$ for $(Y,L)$ and $\X$ for $(X,\restr{L}{X})$, since $X$ is invariant under $ρ$.

Write the Hilbert and weight functions of $\Y$ and $\X$ as
\begin{align*}
    h^0_Y(k) &=  a_0 k^n+a_1k^{n-1}+O(k^{n-2}),\\
    w_Y(k) &= b_0 k^{n+1}+b_1k^n+O(k^{n-1}),\\
    h^0_X(k) &= c_0k^{n-u}+ c_0k^{n-u-1} + O(k^{n-u-2})\\
    \intertext{and}
    w_X(k) &= d_0k^{n-u+1}+ c_0k^{n-u} + O(k^{n-u-1}),
\end{align*}
respectively. The following Proposition is a special case of \cite[Theorem 4.1]{arezzo2011k}. We present an elementary proof in Section~\ref{sec:combiproofs} of the Appendix along the lines of \cite{Stoppa2010}.
\begin{proposition}\label{prop:cidf}
The Donaldson-Futaki invariant of the test configuration $\X$ is given by
    \begin{equation}\label{eq:cidf}
        \DF(\X) = \DF(\Y) + \frac{ν_Y-γ}{n+1-u}\left(\frac{(n+1)S}{2u}-\frac{uμ_Y}{n}\right),
    \end{equation}
    where we have denoted
    \begin{align*}
        ν_Y=\frac{b_0}{a_0},  \quad S=\sum_{i=1}^u s_i \quad \text{ and } \quad μ_Y=\frac{a_1}{a_0}.
    \end{align*}
%
\end{proposition}

The result of Proposition \ref{prop:cidf} also applies also to test configurations which are not products. Assume that $(\Y,\scrL)$ is an arbitrary test configuration for $(Y,L)$. Assume for simplicity that the exponent is 1. Let
\begin{equation}
    R= \bigoplus_{k=0}^\infty R_k=\bigoplus_{k=0}^\infty H^0(Y,L^k)
\end{equation}
be the graded coordinate ring of $(Y,L)$ and let $F_\bullet R$ be a graded filtration corresponding to the test configuration $\Y$ (cf. Remark \ref{rem:ringfilt}). We have an induced map
\begin{equation}\label{eq:largestlevel}
    R\longrightarrow R_γ\defeq \bigoplus_{k=0}^\infty R_k/F_{n_k-1}R_k,
\end{equation}
where $n_k$ is the smallest integer such that $F_{n_k}R_k = R_k$, which is finite by condition (iii) of Remark \ref{rem:ringfilt}. Let $I_γ$ be the ideal generated by $\bigoplus_{k=0}^\infty F_{n_k - 1}R_k$. Define the \emph{subscheme of least weight} of the test configuration $\Y$ to be the subscheme of $Y$ determined by $R/I_γ$.

The limit of the subscheme of least weight is fixed under the $\GG_m$ action over the central fibre. Slightly more generally, the following lemma follows directly from the definition of the scheme $Y_γ$.
\begin{lemma}
    The closure of the orbit of the subscheme of least weight $Y_γ$ in $\Y$ is isomorphic to $Y_γ\times\AA^1$ as (quasi-projective) polarised varieties. Moreover, the lifting of the $\GG_m$-action on $\AA^1$ to $Y_γ\times\AA^1$ is trivial with a possibly nontrivial linearisation.
\end{lemma}
\begin{proof}
    Let $\Y_γ$ denote the closure of $Y_γ$ under the $\GG_m$-action. Consider the linear map
    \begin{equation}
        Φ\colon R\rightarrow \bigoplus_{k=0}^\infty\bigoplus_{i=0}^\infty \frac{F_iR_k/F_{i-1}R_k}{J}
    \end{equation}
    defined by the projection $R_k\rightarrow R_k/F_{n_k - 1} R_k$ and $J$ is generated by all the elements which lie in $\bigoplus_{k=0}^\infty F_{n_k - 1}R_k$. It is straightforward to see that $Φ$ is a homomorphism of graded rings whose kernel is exactly the ideal $I_γ$.  Finally, the scheme $\Y_γ$ is isomorphic to the product $Y_γ\times \AA^1$ since it is the projectivisation of the ring
    \begin{equation}
        \Rees F_\bullet R/\widetilde{J},
    \end{equation}
    where $\widetilde{J}$ is the ideal generated by $(\bigoplus_{i=1}^{n_k - 1}F_i R)t^i$. The statement about the action follows since the $\GG_m$-action simply scales any graded component of its coordinate ring with weight $-n_k$.
\end{proof}
\begin{example}
    If the filtration $F_\bullet R$ is the slope filtration from Remark \ref{rem:slope}, then the subscheme of least weight recovers the subscheme associated to the ideal $\mathscr{I}\subset \shO_B$, in the notation of Remark \ref{rem:slope}.
\end{example}
By a \emph{generic} hypersurface or complete intersection, we mean one which is contained in a dense open set of the corresponding Hilbert scheme.
\begin{lemma}\label{lem:stableCI}
    Let the dimension of the subscheme $Y_γ$ be greater than or equal to $u$. Then a generic complete intersections of codimension $u$ on $Y$ degenerates to a complete intersection on the central fibre. Moreover, if $φ$ is a generic section of $H^0(Y,L^s)$, then the limit of $φ$ has weight $-n_s$ in the $\GG_m$-representation on $H^0(\Y_0,\scrL_0)$.
\end{lemma}
\begin{proof}
    Let $Z$ be a complete intersection in $Y$ of codimension no larger than $u$. We can identify not just $Y_γ$, but $Z\cap Y_γ$, which is generically a proper intersection, with its limit in the central fibre of $\Y$. The locus $\shV$ in the Hilbert scheme of complete intersections of the same topological type as $Z$, whose the intersection with $Y_γ$ is not complete intersection, is determined by any finite set of generators of the ideal of $Y_γ$ in $Y$. By the assumption on the codimension of $Z$, the locus $\shV$ is a proper closed subset. Hence the locus where the limit is not a complete intersection is also a proper closed subset. The second claim follows from the definition of the $\GG_m$-action.
\end{proof}
A nontrivial example where the above results can be applied is given in the following section.


\section{Complete intersections in flag varieties} 
\label{sec:subvarieties_in_flag_varieties}
In this section we apply the results of Section \ref{sec:koszul} to flag bundles. Fix a smooth projective variety $(B,L)$, a line bundle $A$ on $B$ and a flag bundle $Y=\Flag_r(E)$ with an ample underlying vector bundle $E$ of rank $r_E$. Let $Y$ be polarised by its relative canonical bundle $\shL_σ$. Fix a subsheaf $\shF\subset E$ of rank $r_F$ and let $(\Y,\scrL_λ(L))$ be the test configuration of $(Y,\shL_λ(L))$ induced by the degeneration of the vector bundle $E$ into a direct sum $\shF\oplus E/\shF$ defined in Section~\ref{sec:test}. We also denote $q = \rank E/\shF$.

\begin{lemma}\label{lem:LWSS}
    The relative dimension of the least weight subscheme in the central fibre $\Flag_r(\extE)_0$ is given by
    \begin{equation}\label{eq:lws}
        N_{r,{r_E},{r_F}} = \sum_{i=1}^{p-2} r_i(r_{i+1}-r_i) + r_{p-1}(q-r_{p-1}) + \sum_{i=p}^c (r_i-q)(r_{i+1}-r_{i}),
    \end{equation}
    where $r = (0,r_1,\ldots,r_c,{r_E})$ and
    \begin{equation}\label{eq:pdef}
        p = \min \{a\colon e \geq a \geq 1, r_a > {r_E}-f  \}
    \end{equation}
\end{lemma}
\begin{proof}
    We will describe the filtration corresponding to the test configuration $\Flag_r(\extE)$ in detail in Section~\ref{sec:SW}. However, it suffices to see that the subscheme fixed by the $\GG_m$-action on the central fibre is the intersection of $\Flag_r(\extE)$ with the subscheme
    \begin{equation}\label{eq:SSLW}
        \prod_{i=1}^{p-1}\PP(\ex^{r_i} E/\shF)\times\prod_{j=p}^c\PP(\ex^q E/\shF\ot \ex^{r_j-q}E)\subset \prod_{k=1}^c\PP(\ex^{r_k} E)
    \end{equation}
    The dimension of the locus of $k$-planes containing a fixed $q$-dimensional vector space in a Grassmannian of $k$-planes in an $l$-dimensional vector space is $(k-q)(l-k)$. The dimension in Equation \eqref{eq:lws} is then calculated by considering the flag bundle as an iterated fibration of Grassmannians and using elementary geometric considerations.
\end{proof}

\begin{lemma}\label{lem:subweight}
    Let $λ$ be an element of $\shP(r)$. The lowest weight $γ$ of the $\GG_m$-action on sections of $\shL_λ$ is given by
    \begin{equation}
        γ=\sum_{i=p}^cs_i\max\{(r_i-q), 0\},
    \end{equation}
    where $s_{c-i} = λ_{i}-λ_{i-1}$ for $i\in r$ and $p$ was defined in Equation~\eqref{eq:pdef}.
\end{lemma}
\begin{proof}
    Recall that the bundle $\shL_λ$ is the restriction of the line bundle $\bigotimes_{i=1}^c\shO_{\PP(\ex^{r_i} E)}(s_i)$. By Borel-Weil (cf. Equation \ref{eqbbw}) the sections of lowest weight over the central fibre of $\Y$ are sections of
    \begin{equation}
        \bigotimes_{i=1}^{p-1} S^{s_i}(\ex^{r_i} E/\shF)\ot\bigotimes_{j=p}^cS^{s_j}\left(\ex^q E/\shF\ot \ex^{r_j-q}\shF\right).
    \end{equation}
    The statement of the Lemma follows by the definition of the action, which scales fibres of $\shF$ by weight 1 and fixes the complement $E/\shF$.
\end{proof}

For any tuple of sections
\begin{equation}
\underline{φ} = (φ_1,\ldots,φ_u)\in \prod_{i=1}^q|s_i\shL_λ(A)|
\end{equation}
we write
\begin{equation}
    X_{\underline{φ}} = Z(φ_1)\cap\ldots\cap Z(φ_u)
\end{equation}
for their intersection. Let $\X$ be the Zariski closure of the orbit of $X$ under the $\GG_m$-action inside $\Y$. Let $\shF$ be a torsion free, saturated coherent subsheaf of $E$ and assume that the sections $φ_i$ are generic and that $u< N_{r,{r_E},{r_F}}$. We are now in the situation of Lemma \ref{lem:stableCI} and hence of Proposition \ref{prop:cidf} with the weight $γ$ given by Lemma \ref{lem:subweight}. We take the polarisation on $X_{\underline{φ}}$ to be the restriction $\shL_λ(A)$.

We now revert to the notation of Sections \ref{sec:curve} and \ref{sec:anybase}, where $b_0,b_1,a_0$ and $a_1$ are the coefficients of the two highest degree terms of polynomials $χ(\Flag_r(E),\shL_λ(A)^k)/\rank E^{kλ}$ and $χ(B,\widetilde{E}^{kλ}\ot A^k)/\rank E^{kλ}$, respectively. Recall that sections of $E^{kλ}$ correspond to sections of $\shL_λ(A)^k$ and the highest order terms of the polynomial $χ(B,\widetilde{E}^{kλ}\ot A^k)$ and the weight polynomial $w(k)$ of $(\Y,\scrL_λ(A))$ agree.
\begin{proposition}\label{prop:deltaPos}
    Let $σ$ be the canonical partition $σ_{{r_F},r}$ (cf. Definition \ref{def:canonical}). The difference
    \begin{equation}
        Δ=\DF(\Y)-\DF(\X)
    \end{equation}
    is positive for the polarisation $\shL_σ$ if the base $B$ is a curve. If the dimension $\dim_\CC B$ is arbitrary, then $Δ$ is positive when the polarisation is taken to be $\shL_σ(L^m)$ on $\Flag_r(E)$ for $m\gg0$.
\end{proposition}
\begin{remark}
    If $B$ is a curve, $E$ is ample and semistable, then the complete intersection $X_{\underline{φ}}$ polarised by the restriction of the bundle $\shL_σ$ is not destabilised by test configurations induced from extensions of $E$.

     If $B$ is an arbitrary polarised manifold, the same statement is true for complete intersections of sections of $\shL_σ(L^m)^{\ot s_i}$, $1<i<u$, for $m\gg 0$. It would be more interesting, although much harder, to study the asymptotics of test configurations of a fixed complete intersection as $m$ goes to infinity.
\end{remark}
\begin{proof}[Proof of Proposition \ref{prop:deltaPos}]
    Indeed we have
    \begin{equation}
            \frac{b_0}{a_0}-γ\geq 0
    \end{equation}
    with equality only in the case of the action scaling every section with the same weight. The above inequality is equivalent to
    \begin{equation}
        \lim_{k\to\infty} \frac{w_Y(k)}{kh_Y^0(k)} - γ\geq 0
    \end{equation}
    where $h_Y^0(k)$ is Hilbert polynomial of $\shL_σ(A)$ and $w(k)$ its equivariant analogue. Write
    \begin{equation}
        w(k) = \sum_i i \dim V_i^{(k)},
    \end{equation}
    where $V_i^{(k)}$ is the $i$th weight subspace of the representation of $\GG_m$ on $H^0(B,E^{kσ}\otimes A^k)$. By definition of $γ$, we have
    \begin{equation}
        w_Y(k) \geq \sum_i γ\dim V_i^{(k)} = γ k h_Y^0(k).
    \end{equation}
    It suffices to show that we have the inequality
    \begin{equation}\label{eq:ineqclaim}
    \frac{n(n+1)}{2}\geq μ_Y
    \end{equation}
    We have
    \begin{equation}
        μ_Y=μ_{\mathbf{f}}+μ_{\mathrm{rel}},
    \end{equation}
    where $μ_\mathbf{f}$ is the \emph{slope} of a fibre defined by
    \begin{equation}
        \rank E^{kσ} = D_{σ,r}\left(k^N_{{r_E},r} + μ_\mathbf{f} k^{N_{{r_E},r}-1} + O(k^{N_{{r_E},r}-2})\right),
    \end{equation}
    for some rational number $D_{σ,r}$ and $μ_{\mathrm{rel}} = \frac{a_1}{a_0}$.
    By the choice of polarisation we have $μ_\mathbf{f}=\frac{N_{{r_E},r}}{2}$. The other term $μ_{\mathrm{rel}}$ is obtained from Riemann-Roch. In the case $\dim B = 1$ the inequality \eqref{eq:ineqclaim} is clear. Consider the line bundle $\shL_σ(L^m)$. Then by Equation~\eqref{eq:atr} we have
    \begin{equation}
        μ_\mathrm{rel} = -\frac{b\deg K_B}{2\deg L} m^{-1} + O(m^{-2}),
    \end{equation}
    so there is an $m_0>0$ such that the inequality \eqref{eq:ineqclaim} holds for $m>m_0$.
\end{proof}

In light of Proposition \ref{prop:deltaPos}, we suspect that one has to start with an unstable vector bundle $E$ in order to find K-unstable examples of complete intersections for some choices of the parameters $E,F,B$ and $s_i$. We conclude with the proof of Theorem \ref{thm:simplelimit} and explain how Theorem \ref{thm:grEx} follows from Theorem \ref{thm:simplelimit}.
\begin{proof}[Proof of Theorem \ref{thm:simplelimit}]
    By Lemma \ref{lem:stableCI}, Lemma \ref{lem:LWSS} and Lemma \ref{lem:subweight} we are in situation of Proposition \ref{prop:cidf}, so the rest of the proof reduces to a straightforward calculation.
    Recall from Chapter \ref{chap:flags} that we have
    \begin{align*}
        b_0 &= \frac{h_2(λ){r_F}({r_E}μ_E+μ_F)}{{r_E}({r_E}+1)} + \frac{c_2(λ){r_F}({r_E}μ_E-μ_F)}{{r_E}({r_E}-1)},\\
        b_1 &= H_λ A_2(\widetilde{E}) + c_1(λ)\left(μ_E + \frac{{r_F}}{{r_E}}(1-g)\right),\\
        a_0 &= c_1(λ)μ_E, \\
        \intertext{and}
        a_1 &= 1-g,
    \end{align*}
    where $c_i(λ)$ denotes the ith elementary symmetric polynomial of $λ=(λ_1,\dotsc,λ_c)$. After some algebraic manipulation we can write $\DF(\X) = Cg + O(g^0)$ where
    \begin{align}\label{eq:futCI}
         C&= D\Big(h_2(λ)(n+1)(n-u){r_F}{r_E}^2(μ_E-μ_F) - γuc_1(λ)μ_E \\
                 & - c_1(λ)^2 {r_E}({r_E}+1){r_F} \left((n^2+n-nu-r_Eu)μ_E-(n+1)(n-u)μ_F)\right)\Big)
    \end{align}
    where $n=N_{{r_E},r}+1$ and $D= \big({r_E}^2({r_E}^2-1)n(n+1-u)\big)^{-1}$. Alternatively we can write
    \begin{equation}
        \DF(\X) = D\left(C_E \deg E + C_F \deg F\right)g + O(g^0),
    \end{equation}
    where we have denoted
    \begin{align}\label{eq:CECF}
        C_E &= ({r_E}^2-1)uc_1(λ)({r_F}c_1(λ)-{r_E}γ)-\frac{{r_F}}{{r_E}}C_F \\
        \intertext{and}
        C_F &= (N_{{r_E},r}+1)(N_{{r_E},r}-u)\left(({r_E}+1)c_1(λ)^2 - 2{r_E}h_2(λ)\right).
    \end{align}
    \end{proof}
    \begin{proof}[Proof of Theorem \ref{thm:grEx}]
        In the situation of Theorem \ref{thm:grEx} we have $\deg E = \deg F$. Computing the sign of the sum $C_E+C_F$  amounts to solving a polynomial inequality in $e,λ,f$ and $u$. Let $p$ be an integer between $1$ and $e-1$. Since we are assuming $e-f\geq p$, we also have $γ=0$ by Lemma \ref{lem:subweight}. Then there exist positive constants $D'$ and $D''$ such that
    \begin{equation}
        \begin{split}
            D'(C_E+C_F) &= D''(u-1)  \\
            & -({r_E}-{r_F})({r_E}-p-1)({r_E}-p)({r_E}-p+1)(p-1)p(p+1)\\
            &- {r_E} ({r_E} - 1) ({r_E} + 1) ({r_E} - {r_F} - p) p.
        \end{split}
    \end{equation}
    Hence assuming $u=1$ implies immediately that $C_E+C_F<0$ so the test configuration induced from $(\Y,\scrL)$ as described on page \pageref{thm:simplelimit}. The code for repeating the calculations and for simulating more examples is contained in \cite[Futaki invariants of complete intersections]{codepage}.
    \end{proof}

\begin{remark}
    While the inequality $C_E+C_F<0$ seems to hold more generally we only know how to prove it in the Grassmannian case.
\end{remark}
\begin{example}[Projective bundles]
    Equation~\eqref{eq:futCI} gets a very nice form for projective bundles. In the notation used in the proof of Theorem \ref{thm:simplelimit}, letting $λ=(1)$ gives
    \begin{equation}
        \DF(\X) = \Big(\frac{({r_F}-γu)\deg E- ({r_E}-u)\deg F}{{r_E}^2({r_E}+1-u)}\Big)g + O(g^0).
    \end{equation}
    This is the example given by Stoppa-Tenni \cite{Stoppa2010}. Note that the convention the authors use for $\PP E$ is dual to ours.
\end{example}




\chapter{Filtrations and relative K-stability} 
\label{chap:filtrations}
The K-stability of a projective variety with the structure of a projective family over a base scheme is in certain cases conjecturally characterised in terms of two types of simple test configurations. On the one hand one can look at test configurations which are equivariant with respect to the projection to the base, and on the other hand one can pull back test configurations from the base. Partial results are known in the case of toric bundles \cite{apostolov2011extremal}, projective bundles \cite{RossThomas}, blowups \cite{arezzo2006blowing,stoppa2007unstable,RossThomas} and flag bundles (Chapters \ref{chap:flags} and \ref{chap:UTF}). We define the notion of \emph{relative K-stability}, which is a conjectural refinement of K-stability, defined in Chapter \ref{chap:stability}. Given a projective morphism $p\colon Y\rightarrow B$ a \emph{relative test configuration} is a projective morphism $\Y\rightarrow B\times\AA^1$, with a $\GG_m$-action inducing a test configuration on each fibre of $p$.
%
%
%
%

We introduce and study filtrations of graded coherent sheaves of algebras in Section \ref{sec:filtrations_of_sheaves} with the aim of generalising the Witt-Nyström-Székelyhidi theory of filtrations in the study of K-stability \cite{witt2012test,szekelyhidi2011filtrations} to the context of relative K-stability. We show how this relates to Székelyhidi's notion of \kbar-stability (see Remark \ref{rem:ringfilt}) in Section~\ref{sec:filtrations_and_k_stability}. The motivation for studying filtrations of sheaves is that it allows us to give a unified treatment of several constructions that have appeared in the theory of K-stability, as well as constructions which we believe to be new. Related work was done by Ross and Thomas \cite{Ross2007}.

In Section~\ref{sec:operations_on_relative_test_configurations}, we propose an algebraic solution to the problem of interpolating test configurations, which was solved analytically in \cite{ross2014analytic}. This is an application of the constructions defined in Section \ref{sec:filtrations_of_sheaves} and Section \ref{sec:filtrations_and_k_stability}. Our approach works when the test configurations are defined for different polarisations as well. As an application, we prove that the K-unstable locus in $\VV(X)$ is open in the Euclidean topology. The behaviour of convex transforms as well as further examples of the interpolation construction are studied in Section~\ref{sec:convex_transform_okounkov_bodies_and_examples}.

In Section \ref{sec:pullback_test_configurations}, we apply the constructions to give a natural definition of pulling back test configurations from the base scheme $B$. We also give an overview where test configurations of this type have appeared in the literature. Finally, we discuss natural filtrations of the coordinate algebras of flag bundles from the new point of view in Section \ref{sec:SW}.

\begin{remark}[A note on terminology]
    Throughout this chapter the word \emph{relative} refers to working over a base scheme, not to be confused with the stability notion used in the extremal YTD correspondence.
\end{remark}

\begin{remark}
    As far as we know, apart from Theorem \ref{thm:SCor} and Proposition \ref{prop:1-1SCor} (Theorem \ref{summaryE}), the content of this Chapter is new even when working over  $\Spec \CC$.
\end{remark}

\section{Filtrations and projective families} 
\label{sec:filtrations_of_sheaves}

By convention, our algebras are $\ZZ^n_{\geq 0}$-graded. Let $B$ be a scheme over the complex numbers. If $\shA$ is a graded sheaf of $\shO_B$-algebras, we assume that $\shA_0 = \shO_B$.

\begin{definition}[Admissible filtrations]\label{def:relativefilt}
    Let
    \begin{equation}
        \shA = \bigoplus_{k=0}^\infty \shA_k
    \end{equation}
    be a sheaf of quasicoherent graded $\shO_B$-algebras over a scheme $B$. Then an \emph{admissible filtration} of $\shA$ is a filtration of coherent subsheaves
    \begin{equation}
        F_\bullet \colon 0=F_{-1}\shA\subset\shO_B=F_0\shA\subset F_1\shA\subset\dotsb\subset \shA,\quad
    \end{equation}
    such that it is
    \begin{enumerate}[(i)]
        \item \emph{multiplicative}, the filtration satisfies the relation $\left(F_i\shA\right) \left(F_j\shA\right)\subset F_{i+j}\shA$,
        \item \emph{homogeneous}, if $U$ is an open set in $B$, the homogeneous parts of any section of $F_i\shA(U)$ are all in $F_i\shA(U)$, and
        \item \emph{exhaustive}, it satisfies $\bigcup_{i=0}^\infty F_i\shA = \shA$.
    \end{enumerate}
\end{definition}

\begin{remark}
    The property $F_0\shA = \shO_B$ can be replaced by saying that a filtration
    \begin{equation}
        \dotsb \subset F_i \shA \subset F_{i+1}\shA\subset\dotsb
    \end{equation}
    is \emph{discrete}, meaning that $F_j\shA = \shO_B$ for some $j$. Any such filtration can be uniquely reindexed as an admissible filtration.

    There is another equivalent convention for defining an admissible filtration by reversing the order of the filtration. Codogni and Dervan described the process of translating between the two points of view in \cite{codogni2015non} in the nonrelative case. We work with increasing filtration as a matter of convenience while developing the theory.
\end{remark}

\begin{definition}\label{def:falg}
    Let $\fAlg$ denote the category of pairs $(\shA,F_\bullet\shA)$ such that
    \begin{enumerate}[(i)]
        \item $\shA$ is a graded coherent $\shO_B$-algebra, which is locally finitely generated over $\shO_B$ and
        \item $F_\bullet\shA$ is an admissible filtration.
    \end{enumerate}
    The morphisms are grading and filtration preserving homomorphisms. We refer to the objects admissibly filtered graded $\shO_B$-algebras and often simply refer to them by the symbol $F_\bullet \shA$.
\end{definition}

\begin{definition}\label{def:image}
    Let $f\colon\shA\rightarrow\shB$ be an surjection of graded $\shO_B$-modules and $f_i$ is the restriction of $f$ to the subsheaf $F_i\shA$. We define the \emph{image filtration} $\left(f_*F\right)_\bullet\shB$ by
    \begin{equation}
        (f_*F)_i\shB = \im f_i.
    \end{equation}
\end{definition}

\begin{definition}\label{def:induced}
    Let $g:\shA\rightarrow\shB$ be a morphism of graded filtered $O_B$-algebras and let $G_\bullet\shB$ be a filtration of $\shB$. We define the \emph{induced} filtration $\left(f^*G\right)_\bullet\shA$ by
    \begin{equation}
        \left(f^*G\right)_i\shA = \shA\cap G_i\shB =  \{ a\in \shA : f(a)\in G_i\shB \}.
    \end{equation}
\end{definition}
\begin{remark}
    If $f$ is an isomorphism, these two constructions are clearly inverse to one another, that is we have identities
    \begin{equation}
        f_*f^*G_\bullet\shA=G_\bullet\shA
    \end{equation}
    and
    \begin{equation}
        f^*f_*F_\bullet\shA=F_\bullet\shA.
    \end{equation}
\end{remark}
\begin{definition}\label{def:derived}
    Let $\shE$ be a sheaf of $\shO_B$-modules and let $H_i\shA\in\fAlg$. We define the \emph{derived filtration} \cite{bourbaki1972commutative}, also denoted by $H_\bullet\shE$, by
    \begin{equation}
        H_i \shE  = (H_i\shA)\shE.
    \end{equation}
\end{definition}

\begin{lemma}\label{lem:IF}
    Let $f\colon \shA\rightarrow \shB$ be a (grading-preserving) morphism of filtered graded sheaves of $\shO_B$-algebras. Then the image filtration and induced filtration, when defined, are admissible filtrations in the sense of Definition \ref{def:relativefilt}.
\end{lemma}
\begin{proof}
    We verify the conditions in Definition \ref{def:relativefilt} starting with the image filtration. Fix a filtered algebra $F_\bullet\shA\in\fAlg$. To show $(i)$, let $s_i$ and $s_j$ be sections of $f_*F_i\shA$ and $f_*F_j\shA$ over $U\subset B$, respectively. Then making $U$ smaller if necessary, we have elements $t_i$ and $t_j$ in $F_i\shA(U)$ and $F_j\shA(U)$, respectively, such that $f(t_i) = s_i$ and $f(t_j) = s_j$. The section $t_it_j$ is in $F_{i+j}\shA(U)$, so $f(t_it_j)$ is in $(f_*F)_{i+j}\shA(U)$. Homogeneity and exhaustivity follow easily since $f$ preserves the grading and is a surjective map of sheaves.

    The induced case is similar. To check multiplicativity, let $s_i\in g^*G_i \shB(U)$ and $s_j\in g^* G_j\shB(U)$. Since $G_\bullet\shB$ is admissible and $g$ is a homomorphism, we have $g(s_i s_j)\in G_{i+j}\shB(U)$ and hence $s_i s_i \in g^*G_{i+j}(U)$. Homogeneity and exhaustivity are again trivial, since the map $g$ preserves the grading.
\end{proof}
Tensor algebras of filtered modules are naturally endowed with an admissible filtration.
\begin{definition}[The tensor algebra of a filtered module]\label{def:FTA}
    Let
    \begin{equation}
        F_{\bullet} \shE: 0=F_0\shE \subset F_1\shE\subset\dotsb\subset F_n\shE = \shE
    \end{equation}
    be a filtered sheaf of $\shO_B$-modules. The tensor algebra of $\shE$ is naturally a filtered algebra by setting
    \begin{equation}\label{eq:tensorfilt}
        G_p T(\shE) =  \shO_B\oplus\bigoplus_{k=1}^\infty\bigoplus_{i_1+\dotsb+i_k=p} F_{i_1} \shE\ot\dotsb\ot F_{i_k}\shE
    \end{equation}
    for $p\in \ZZ_{>0}$.
\end{definition}
\begin{lemma}\label{lem:FTA}
    The filtration defined in Equation~\eqref{eq:tensorfilt} is admissible.
\end{lemma}
\begin{proof}
    Follows directly from the definitions.
\end{proof}
\begin{definition}[Tensor products of filtered algebras]\label{def:FTen}
    Let $F_\bullet\shA$ and $G_\bullet\shB$ be filtered sheaves of graded $\shO_B$-algebras. Define the tensor product
    \begin{equation}
        \left(F_\bullet \ot G_\bullet\right)_p \left(\shA\ot_{\shO_B}\shB\right) = \bigoplus_{i+j =p} F_{i} \shA\ot_{\shO_B} G_{j}\shB,
    \end{equation}
    which is a filtered $\ZZ^2$-graded sheaf of coherent $\shO_B$-algebras.
\end{definition}
\begin{lemma}\label{lem:TensAssoc}
    Tensor products of filtered algebras are commutative and associative.
\end{lemma}

\begin{definition}
    The \emph{Veronese subalgebra} $\shA^{(d)}$ is defined as the subalgebra
    \begin{equation}
        \shA_{(d)} = \bigoplus_{k=0}^\infty \shA_{dk}.
    \end{equation}
    Similarly, if $\shC$ is a $\ZZ^N_{\geq 0}$-graded sheaf of algebras, define the $a=(a_1,\dotsc,a_N)$-diagonal
    \begin{equation}
        \shC_{a} = \bigoplus_{k=0}^\infty \shC_{(ka_1,\dotsc,ka_n)}.
    \end{equation}
\end{definition}

\begin{definition}[Diagonal subalgebras]\label{lem:tensor}
    Let $F_\bullet\shA$ and $G_\bullet\shB$ be filtered sheaves of graded $\shO_B$-algebras. For any pair $(a,b)$ of nonnegative integers, we define the \emph{$(a,b)$-diagonal product} of the two filtered algebras by
    \begin{equation}
        \left(F_\bullet\ot_{(a,b)}G_\bullet\right) \left(\shA\ot\shB\right) =  \left(\shA\ot_{\shO_B}\shB\right)_{(a,b)} \cap \left(F_\bullet \ot G_\bullet\right)_\bullet \left(\shA\ot_{\shO_B}\shB\right).
    \end{equation}
    We refer to this filtration the $(a,b)$-\emph{diagonal product} of two filtered algebras. Define weighted diagonal products of any finite collections of filtered sheaves of algebras similarly.
\end{definition}
\begin{lemma}\label{lem:tensorAdm}
    The diagonal product is a well-defined operation on $\fAlg$.
\end{lemma}
\begin{proof}
    This is a straightforward verification.
\end{proof}

\begin{definition}[Filtrations generated at degree 1]\label{def:FGD1}
    Let $F_\bullet \shE$ be a filtered sheaf of $\shO_B$-modules and $\shA$ a graded sheaf of $\shO_B$-algebras such that $\shA_1 = \shE$. We say that the algebra $\shA$ is generated at degree 1 so that there is a surjective morphism
    \begin{equation}
        p\colon S(\shE)\rightarrow \shA.
    \end{equation}
    Let $F_\bullet S(\shE)$ be the filtration on $S(\shE)$ induced by the filtration on $T(\shE)$ defined in Definition \ref{def:FTA}. Define the \emph{filtration $G_\bullet \shA$ of $\shA$ generated by $F_\bullet \shE$} to be the image filtration $p_*F_\bullet \shA$.
\end{definition}
\begin{lemma}\label{lem:FGen}
    A filtration generated at degree 1 is admissible.
\end{lemma}
\begin{proof}
    Follows from Lemma \ref{lem:IF} and Lemma \ref{lem:FTA}.
\end{proof}

\begin{definition}
    We define the Rees algebra and the associated graded algebra of $F_\bullet \shA$ as
    \begin{enumerate}[(i)]
    \item  $\shRees(F_\bullet \shA) = \oplus_{i\geq 0}(F_i\shA)t^i \subset \shA[t],$
    \item  $\mathit{gr}(F_\bullet\shA) = \oplus_{i\geq 0} (F_{i} \shA)/(F_{i-1} \shA),$
    \end{enumerate}
    respectively. We say that a filtration $F_\bullet \shA$ is finitely generated if $\Rees(F_\bullet \shA)$ is locally finitely generated as an $\shO_B$-algebra. Note that both objects are bigraded. We refer to the two gradings by the $\shA$-grading and the $t$-grading.
\end{definition}

\begin{lemma}\label{lem:FFG}
    Let $f\colon\shA\rightarrow \shB$ be a morphism of graded sheaves of $O_B$-algebras. The tensor product preserves finite generation of admissible filtrations. If we assume the homomorphism $f$ is surjective, the same is true for the image filtration. Similarly, if the homomorphism $f$ is injective, the induced filtration is finitely generated.
\end{lemma}
\begin{proof}
    This can be easily seen by relating the Rees algebras. Let $F_\bullet$ and $G_\bullet$ be filtrations for $\shA$ and $\shB$, respectively, and $f\colon\shA\rightarrow\shB$ is a map preserving the grading. Then we have natural morphisms
    \begin{equation}
        \shRees(F_\bullet\shA)\rightarrow \shRees(f_*F_\bullet\shB)
    \end{equation}
    and
    \begin{equation}
        \shRees(f^*G_\bullet\shA)\rightarrow \shRees(G_\bullet\shB)
    \end{equation}
    which preserve the grading. The claims for pushforwards and pullbacks then follow easily. Note that we must assume that $f$ is a surjection in the pushforward case. In the tensor product case we have a natural isomorphism
    \begin{equation}
        \shRees \left(F_\bullet \shA\ot_{\shO_B} G_\bullet \shB\right) \cong \shRees(F_\bullet \shA)\ot_{\kk[t]}\shRees(G_\bullet \shB)\subset \left(A\ot B\right)[t]
    \end{equation}
    which immediately implies the claim.
\end{proof}

\begin{remark}[Filtrations of coordinate rings]
    Let $(B,L)$ be a projective scheme and denote $R=\bigoplus_{k=1}^\infty H^0(B,L^k)$. Definition \ref{def:relativefilt} contains the special case of admissible filtrations as defined \cite{szekelyhidi2011filtrations} in of $R$ by taking the base to be a point.
\end{remark}

\section{Relative K-stability} 
\label{sec:filtrations_and_k_stability}

In this section we define relative test configurations and describe their relationship to admissible filtrations discussed in Section~\ref{sec:filtrations_of_sheaves}.

 Fix a projective scheme $B$ of dimension $b$ with an ample line bundle $L$ and a locally finitely generated graded sheaf of $\shO_B$-algebras $\shA$. Denote the relative projectivisation of $\shA$ by $Y=\shP\mathit{roj}_B(\shA)$ with the projection $p\colon Y\rightarrow B$. We assume that $\shA$ is locally finitely generated at degree 1, which means that there exists a surjective homomorphism
\begin{equation}
    S(\shA_1) \rightarrow \shA
\end{equation}
and hence an embedding
\begin{equation}
    \shProj_B \shA \rightarrow \PP \shA_1.
\end{equation}

\begin{definition}
    Define the \emph{graded algebra of sections} of $L$ by
    \begin{equation}
        R_L=\bigoplus_{k=0}^\infty H^0(B,L^k)
    \end{equation}
    and the associated graded sheaf of algebras by
    \begin{equation}
        \shR_L = \bigoplus_{k=0}^\infty L^k.
    \end{equation}
\end{definition}

\begin{proposition}\label{prop:Reesflat}
    The Rees algebra of a graded sheaf of coherent $\shO_X$-algebras
    \begin{equation}
        \shRees(F_\bullet \shA) = \bigoplus_{k=0}^\infty F_k\shA t^k
    \end{equation}
    is a flat sheaf of graded $\shO_{\AA^1}$-algebras.
\end{proposition}
\begin{proof}
    The claim is local on $B$. The Rees algebra of a $k[t]$-module is torsion free as a $k[t]$-algebra. A well known flatness criterion states that a module over a principal ideal domain is flat if and only if it is torsion free \cite[Section 6.3]{eisenbud1995commutative}.
\end{proof}

We say that $\shA$ is \emph{ample} if the $\shO(d)$-line bundle on $Y$ defines an embedding for some positive integer $d$. If this is true for $d=1$, $\shA$ is \emph{very ample}.

\begin{definition}\label{def:RelTC}
    Let $Y$ be a scheme, $p\colon Y\rightarrow B$ a projective morphism and $\shL$ a $p$-ample line bundle. A \emph{relative test configuration}, or \emph{$p$-test configuration} $(\Y,\scrL,ρ)$ for the pair $(Y,\shL)$ is defined by
    \begin{itemize}
        \item a flat morphism $f\colon \Y\rightarrow \AA^1$ which factors through $B\times\AA^1$, along with an isomorphism $φ_t:f^{-1}\{1\}\cong Y$,
        \item an $f$-ample line bundle $\scrL$ on $\Y$ such that $\scrL_t$ such that the isomorphism over the fibre $f^{-1}\{1\}$ identifies the line bundles $\scrL_1$ and $\shL$.
        \item an algebraic action $\rho:\GG_m\times\Y\rightarrow\Y$ which makes the projection to $B\times\AA^1$ equivariant with respect to the trivial action on $B$ and the standard action on $\AA^1$, together with a $\scrL$-linearisation action on $\Y$ that covers the usual action on $\AA^1$.
    \end{itemize}
    The integer $r$ is called the \emph{exponent} of the $p$-test configuration. The fibre $f^{-1}\{0\}$ is called the central fibre.  If $\scrL$ is ample, a $p$-test configuration is a test configuration in the sense of Definition \ref{def:TCdef}, in which case we say that $\Y$ is an ample $p$-test configuration.
\end{definition}

\begin{theorem}\label{thm:SCor}
    A finitely generated admissible filtration $F_\bullet\shA$ determines a $p$-test configuration
    \begin{equation}
        \left(\shP\mathit{roj}_{B\times\AA^1}\shRees F_\bullet\shA,\shO(1)\right)
    \end{equation}
    with its natural $\GG_m$-action. Conversely, a $p$-relative test configuration $(\Y',\scrL)$ of $\shProj_B\shA$ determines a finitely generated admissible filtration $G_\bullet \shA$.
\end{theorem}
\begin{proof}
    Let the group $\GG_m$ act with its natural action on the line $\AA^1$ and extend it trivially to the product $B\times\AA^1$. There is a natural linearisation of this action on the sheaf $\shRees F_\bullet\shA$ with the following local description. Let $U$ be an open set in $B$ such that the projection $\restr{p}{U}$ corresponds to a graded $A_0$-algebra $A$, where $A_0$ is the coordinate ring of $B$ over $U$. The filtration $F_\bullet\shA$ restricts to an admissible filtration $F_\bullet A$. Then we have a commutative diagram
        \[
        \begin{tikzcd}[column sep=large]
        \operatorname{Rees} F_\bullet A \arrow{r}{t\mapsto s^{-1}t} & \operatorname{Rees} F_\bullet A[s^{\pm 1}] \\
        A_0[t] \arrow{u}{} \arrow{r}{t\mapsto s^{-1}t} & A_0[t,s^{\pm 1}]\arrow{u}
        \end{tikzcd}
        \]
    with obvious notation. This defines a $\GG_m$-linearisation on $\shA$ over $U$ compatible with the grading. The morphisms $p_U$  glue as $U$ ranges over an open cover of $B$ to determine a $\GG_m$-scheme $(\shP\mathit{roj}_{B\times\AA^1}\shRees F_\bullet \shA,\shO(1))$ with an equivariant projection down to $B\times\AA^1$. The projection to $\AA^1$ is flat by Proposition \ref{prop:Reesflat} and the central fibre is isomorphic to
    \begin{equation}
        \shProj_B \mathit{gr}(F_\bullet \shA)
    \end{equation}
    with a $\GG_m$-action defined by the $t$-grading.

    Given a $p$-test configuration $(\Y,\scrL)$, we produce an admissible filtration as follows. By replacing $\scrL$ with a power if necessary, we may assume that we have an embedding
    \begin{equation}
        ι\colon\Y\lra \PP g_*\scrL,
    \end{equation}
    where $g$ is the projection $\Y\rightarrow B\times\AA^1$. Using the identification $(\Y_1,\restr{\scrL}{B\times\{1\}}) \cong (Y,\shL)$ we obtain a natural map
    \begin{equation}
        h\colon\shA\lra \bigoplus_{k=0}^\infty g_*\left(\restr{\scrL}{B\times\{1\}}\right)^{\ot k},
    \end{equation}
    which we may take to be an isomorphism by \cite[Lemma 29.14.4]{stacks-project}.

    For any sufficiently small affine neighborhood $U\cong \Spec A_0\subset B$, we have a diagram
    \[
    \begin{tikzcd}
     g^{-1}U \cong \Proj_{\Spec A_0} S \arrow{r}{ι} \arrow{dr}{\restr{g}{U}} & \PP_{\Spec A_0} S_1  \arrow{d}\\
    & \Spec A_0
    \end{tikzcd}
    \]
    where $S$ is a graded $A_0$-algebra. Since the projection $g$ is equivariant for the trivial action on $U$, the linearisation of the $\GG$-action determines a representation on $A_1$. This determines a splitting $A_1 = \bigoplus_{i=1}^rW_i$ by weight. We obtain a presheaf of filtered $\shO_B$-modules as $U$ ranges over sufficiently small affine open sets of $B$. The associated sheaf generates an admissible filtration $G_\bullet\shA$ of $\shA$ by Lemma \ref{lem:FGen}.
\end{proof}
\begin{remark}\label{rem:slope0}
    If $B=\Spec \kk$, this theorem was proved by \cite{szekelyhidi2011filtrations}.

     If $X=Y=B$, $L$ is an ample line bundle on $X$ and $p$ is the identity morphism, this theorem reduces to the blowing up formalism due to Mumford \cite{mumford1977stability}, Ross and Thomas \cite{Ross2007} and Odaka \cite{odaka2009generalization}. Up to passing to a Veronese subalgebra, any finitely generated admissible filtration of the algebra $\shR_L$ can be obtained from a filtration
     \begin{equation}
        \mathcal{I}_1\subset\dotsb\subset \mathcal{I}_N\subset\shO_X.
     \end{equation}
     See Remark \ref{rem:slope} for an outline of this construction.
\end{remark}

Given an admissible filtration $F_i\shA$ we define the associated Hilbert, weight and trace squared functions by
\begin{align*}\label{eq:FPolys}
        h(k)&=\sum_{i=1}^{\infty}χ\left(B,\frac{F_i\shA_k}{F_{i-1}\shA_k}\right)\\
        w(k)&=\sum_{i=1}^{\infty}-iχ\left(B,\frac{F_i\shA_k}{F_{i-1}\shA_k}\right)\\
        \intertext{and}
        d(k)&=\sum_{i=1}^{\infty}i^2χ\left(B,\frac{F_i\shA_k}{F_{i-1}\shA_k}\right),
\end{align*}
respectively. If the $p$-test configuration given by Theorem \ref{thm:SCor} is ample, the functions $h(k)$, $w(k)$ and $d(k)$ are equal to the functions defined in Lemma \ref{lem:polys}. In this case the Donaldson-Futaki invariant is defined normally by Equation~\eqref{eq:futdef}.

\begin{definition}[Relative K-stability]
    Let $\test_B(Y,L)$ be the set of $p$-test configurations of $(Y,L)$. We define \emph{K-stability relative to $p$} in the same way we defined K-stability in Definition \ref{def:kstab} but by restricting the set of test configurations to ones which lie in $\test_B(Y,L)$.
\end{definition}

\begin{definition}\label{def:testclass}
    Consider the equivalence relation on the set of $p$-test configurations generated by the following three relations.
    \begin{enumerate}[(i)]
        \item Identify a $p$-test configuration $\Y$ with any test configuration with which it is $\GG_m$-equivariantly isomorphic.
        \item Identify any rescaling of the $\GG_m$-action on $(\Y,\scrL)$ (pullback by a cover of $\AA^1$, cf. Remark \ref{rem:scale}).
        \item Identify any pair $(\Y,\scrL)$ and $(\Y,\scrL^d)$ of $p$-test configurations for all $d>1$.
    \end{enumerate}
    Following Odaka \cite{odaka2015parametrization} we call equivalence classes under the above identifications \emph{$p$-test classes} for test configurations. Test configurations up to the first two relations are called \emph{$p$-test degenerations}. Note that we will use the same terminology for arbitrary filtrations later, see Definition \ref{def:testdeg}.
\end{definition}
\begin{proposition}[Theorem \ref{summaryE}]\label{prop:1-1SCor}
    The two constructions in Theorem \ref{thm:SCor} induce a 1-1 correspondence between finitely generated filtrations of $\shA$ up to isomorphism and Veronese subalgebras, and $p$-test classes of $(Y,\shL)$.
\end{proposition}
\begin{proof}
    It suffices to show that the two constructions are inverses to one another up to the stated identifications.

    An automorphism $φ$ of a filtered algebra $F_\bullet \shA$ induces an automorphism of the Rees algebra, and hence of its projectivisation. Conversely, any equivariant isomorphism which preserves linearisations clearly produces an automorphism of the filtered algebra.

    Similarly, the admissibility criterion uniquely fixes the scale of the action, while the final identification corresponds to identifying Veronese subalgebras of $F_\bullet \shA$. This completes the proof.
\end{proof}
We extend the notion of ampleness to admissible filtrations through ampleness of their \emph{finitely generated approximations}.
\begin{definition}[Ampleness for filtrations]
    Let $F_\bullet \shA$ be the filtered algebra and define the filtrations $F^{(k)}_\bullet \shA$ for all $k\in\NN$ to be the filtrations of $\shA_{(k)}$ generated by the filtration $F_\bullet \shA_k$. We say that an element of $\fAlg$ is \emph{ample} if the sequence of filtrations $F^{(k)}_\bullet \shA$ determine $p$-ample test configurations for all $k\in\NN$.
\end{definition}

\begin{definition}\label{def:relpol}
    For any line bundle $A$ on $B$, define the \emph{twisted polarisation}
    \begin{equation}\label{eq:notpol}
        \shL(A) = \shL\ot p^*A.
    \end{equation}
\end{definition}
We abuse notation by denoting the twisted polarisation on any test configuration of $Y$ similarly.
\begin{lemma}\label{lem:adiample}
    Let $(\Y,\scrL)$ be a $p$-test configuration for $(Y,L)$ and let $L$ be an ample line bundle on $B$. Then $(\Y,\scrL(L^m))$ is ample for $m\gg 0$.
\end{lemma}
\begin{proof}
    It suffices to check ampleness over the central fibre $B\times \{0\}$, over which the line bundle $\scrL(L^m)$ restricts to $\shF(L^m)$ for some relatively ample line bundle $\shF$ by construction. This is ample by \cite[Proposition II.7.10]{hartshorne1977algebraic}.
\end{proof}

We close the section on a brief discussion of slope stability which provides a case where amplitude has been studied in detail in Ross and Thomas \cite{RossThomas}.
\begin{remark}[Slope stability]\label{rem:slope}
    Let $ι:B'\subset B$ is a subscheme. We define a filtration of $\shR$ by vanishing orders along $B'$. Denote the ideal sheaf of $B'$ by $\mathscr{I}_{B'}$ and consider the filtration
\begin{equation}
    G_\bullet L^a\colon\mathscr{I}^{b}L^{a}\subset\mathscr{I}^{b-1}L^{a}\subset\dotsb\mathscr{I} L^{a}\subset L^a
\end{equation}
    for any pair of natural numbers $a$ and $b$. Assume from now on that $a$ and $b$ are coprime. The tensor algebra generated by $G_\bullet L^a$ (cf. Definition \ref{def:FGD1}) is admissibly filtered by Lemma \ref{lem:FTA}.

    For example, if $a = b = 1$ we write
    \begin{align*}
         \shO_B &\subset \mathscr{I} L\oplus    \mathscr{I}^2 L^2 \oplus \mathscr{I}^3 L^3      \oplus \mathscr{I}^4 L^4 \oplus\dotsb&\\
                    &\subset L \oplus \mathscr{I} L^2 \oplus \mathscr{I}^2 L^3      \oplus \mathscr{I}^3 L^4 \oplus\dotsb&\\
                    &\subset  L \oplus  L^2\oplus \mathscr{I} L^3      \oplus \mathscr{I}^2 L^4 \oplus\dotsb&\\
                    &\qquad\qquad\vdots\qquad\qquad \vdots\qquad\qquad\vdots &\\
                    &\subset \shR =  L \oplus  L^2\oplus   L^3   \oplus L^4 \oplus\dotsb&.
    \end{align*}
    It is easy to pick out the filtration from the increasing sequence of upper triangular subsets starting from the top left corner starting with
    \begin{equation}
        \shO_B\subset \left(O_B\oplus \mathscr{I}L\right)\subset \left(\shO_B\oplus L\oplus\mathscr{I}^2L^2\right)\subset \dotsb.
    \end{equation}
     We denote the associated $p$-test configuration by $\X_c$ for $c=\tfrac{a}{b}$. If we assume that $c\leq \mathrm{Sesh}(B',L)$, where
    \begin{equation}
        \mathrm{Sesh}(B',L) = \sup\left\{c\in\QQ_{>0} \colon L^r\ot \mathscr{I}_{B'}^{cr} \text{ is globally generated for } r \gg 0\right\},
    \end{equation}
    then the $p$-test configuration $\X_c$ is ample (up to an equivariant contraction in the case $c=\mathrm{Sesh}(B',L)$). This fact is due to Ross and Thomas, who also found a beautiful formula for the Donaldson-Futaki invariant in this case in terms of the \emph{slope} of the triple $(B',L,c)$\footnote{Proposition \ref{prop:flagbase} is proved using this formula.} \cite{RossThomas}.

    More complicated filtrations of the structure sheaf also yield admissible filtrations in a similar manner. Conversely, let $F_\bullet \shR_L$ be an admissible filtration which is generated in degree 1. Let $N$ be the smallest integer such that $F_NL = L$. For any $1\leq i\leq N$, we can define the ideal sheaf $\mathscr{I}_i\subset\shO_X$ to be the ideal sheaf locally generated by sections of the subsheaf $F_i L$. We obtain a filtration
    \begin{equation}
    0\subset\mathscr{I}_1\subset\dotsb\subset \mathscr{I}_N\subset \shO_X.
    \end{equation}
     An alternative construction of the ideal sheaves $\mathscr{I}_i$, starting with an arbitrary test configuration, can be found in Odaka \cite[Proposition 3.10]{odaka2009generalization} or Ross and Thomas \cite{Ross2007}.
\end{remark}


\section{Convex combinations of test configurations} 
\label{sec:operations_on_relative_test_configurations}
The aim of this section is to define a convex structure on equivalence classes of test configurations. The idea is very simple and is based on Segre products of filtered coordinate algebras. Consider the following example.
\begin{example}[A description of the convex combination of test configurations when the base $B$ is a point]\label{ex:PDiag}
    Let $V$ and $W$ be complex vector spaces and let $X$ be a projective variety with two embeddings $ι_1\colon X\subset \PP(V)$ and $ι_2\colon X\subset\PP(W)$. Fix two 1-parameter subgroups of $\SL(V)$ and $\SL(W)$, which determine actions
    \begin{align*}
        &α:\PP(V)\times\GG_m\rightarrow \PP(V)\\
        \intertext{and}
        &β:\PP(W)\times\GG_m\rightarrow \PP(W),
    \end{align*}
    respectively, and fix two positive integers $a$ and $b$. Then we have closed immersions
    \begin{equation}
        X\xrightarrow{Δ} X\times X\rightarrow \PP(S^aV\ot S^b W)
    \end{equation}
    and an associated family
    \begin{equation}\label{eq:PF}
        X\times\GG_m\xrightarrow{Δ} X\times X\times \GG_m \subset \PP(S^a V\ot S^b W)\times\GG_m.
    \end{equation}
    Here the $\GG_m$-action on $S^a V\ot S^b W$ is induced from $α\colon t\mapsto α_t$ and $β\colon t\mapsto β_t$ by setting
    \begin{equation}
        (α,β)_t (v_1\ot\dotsm \ot v_a \ot w_1\ot\dotsm \ot w_b) = (α_tv_1\ot\dotsm\ot α_tv_a \ot β_tw_1\ot\dotsm \otβ_tw_b).
    \end{equation}
    We define the \emph{weighted product test configuration} to be the Zariski closure of the image of the diagonal in Equation~\eqref{eq:PF}. This is clearly a test configuration for $(X,L_1^a\ot L_2^b)$, where $L_1$ and $L_2$ are the two restrictions of the hyperplane bundle under the embeddings $ι_1$ and $ι_2$, respectively.

    We write the resulting test configuration additively as
    \begin{equation}
        a[α]+ b[β],
    \end{equation}
    where the brackets denote taking the product test configuration associated to the $\GG_m$-action under the respective embeddings of $X$ into projective space. The test class determined by Equation~\eqref{eq:PF} (cf. Definition \ref{def:testclass} and Remark \ref{rem:scale}) can be written as
    \begin{equation}
        (1-t)[α] + t[β],
    \end{equation}
    where the parameter $t$ is taken to be $\frac{b}{a+b}$.
\end{example}

From now on, we identify the set of $p$-test configurations of $Y$ with the set of admissibly filtered algebras $F_\bullet \shA$ which satisfy $\shProj_B\shA\cong Y$ and whose filtration $F_\bullet\shA$ is finitely generated by Theorem \ref{thm:SCor}. This justifies the following definition, modelled after Odaka \cite{odaka2015parametrization}.
\begin{definition}[Test degenerations and test classes]\label{def:testdeg}
    Let $p\colon Y\rightarrow B$ be a projective morphism of normal schemes. Define the set of \emph{$p$-test degenerations of $Y$} to be the set $\test_B(Y)$ of admissibly filtered elements $F_\bullet\shA\in\fAlg$ such that $\shProj_B\shA\cong Y$ considered up to isomorphisms.

     Also define the set $\overline{\test_p(Y)}$ of $p$-test classes by additionally identifying Veronese subalgebras in $\test_p(Y)$. We have a natural map
    \begin{equation}
        \test_p(Y) \lra \overline{\test_p(Y)}.
    \end{equation}
    If we wish to fix a relatively ample line bundle $\shL$ on $Y$ (respectively, a ray of relatively ample line bundles), we write $\test_p(Y,\shL)$ (resp. $\overline{\test_p(Y,\shL)}$) for elements of $\test_p(Y)$ (resp. $\overline{\test_p(Y)}$) which define a test degenerations (resp. test classes) for $(Y,\shL)$.

    We denote $\test_{\Spec\kk}(B) = \test(B)$.
\end{definition}

We now state and prove Theorem \ref{summaryF}. Let $I_\QQ$ denote the unit interval $[0,1]\cap \QQ$ and let $Δ_{N-1}$ be the $N-1$ dimensional simplex in $\QQ^{N}$ defined by $t_1 + \ldots + t_{N} = 1$ and $t_i\geq 0$ for $i = 1,\dotsc,N$.
\begin{theorem}[Convex combinations of test configurations]\label{thm:convTC}
    For any $N\in\ZZ_{\geq 2}$, there exists a map
    \begin{equation}
        \Conv_N\colon\test_p (Y)^N\times Δ_{N-1}\lra \overline{\test_p(Y)}
    \end{equation}
    satisfying
    \begin{enumerate}[(i)]
        \item $\Conv_N\left(τ,e_i\right) = τ_i$, where $e_i$ is the $i$th unit vector and $τ=(τ_1,\ldots,τ_N)$ are $p$-test configurations of $(Y,\shL_i)$,
        \item $\Conv_N(τ,t)$ is an element of $\overline{\test_p\left(Y,\shL_t\right)}$, where $\shL_t$ is the line bundle $\sum_{i=1}^{N}t_i \shL_i$, and
        \item if we take $B=\Spec\CC$ and assume that $τ_i$ are finitely generated, the Donaldson-Futaki invariant of $\Conv_N(τ,t)$ is continuous in the second variable.
    \end{enumerate}
\end{theorem}
\begin{theorem}[Theorem \ref{summaryG}]\label{thm:unstable_locus_is_open}
    The K-unstable locus in $\VV(X)$ (cf. Equation \eqref{eq:Vdef}) is open in the Euclidean topology.
\end{theorem}
\begin{proof}
    Fix a basis $L_1,\ldots,L_N$ of the Picard group of $X$ and let $L$ be a K-unstable polarisation. Fix a test configuration $\X$ for $(X,L)$ with negative Donaldson-Futaki invariant. Let $t$ be a point in $I_\QQ^N$, $s=1-\sum_{i=1}^N t_i$ and let $U$ be a neighbourhood of 0 in $I_\QQ^N$ such that $(1-s)L + \sum_{i=1}^N t_iL_i$ is ample for all $t\in\U$.

     For any $t\in U$, define the test class $[\X_t] = (1-s)[\X] + \sum_{i=1}^N t_i[\X_i]$, where $\X_i$ are trivial test configurations for $(X,L_i)$. By Theorem \ref{thm:convTC}, there is an open neighbourhood $V\subset U$ of $0$ such that $\DF(\X_t)$ is negative for all $t\in V$. The set $V$ determines an open neighbourhood of $L$ in $\Amp(X)$ of K-unstable polarisations. Since $L$ was an arbitrary K-unstable polarisation, this completes the proof.
\end{proof}
\begin{remark}\label{rem:irrextension}
    It makes sense to extend the definition of the Donaldson-Futaki invariant of a weighted product $(1-t)τ_1+tτ_2$ for irrational values of $t$ by continuity.
\end{remark}

For simplicity of exposition we restrict to the case a pairwise convex combination. The proof of the general case of Theorem \ref{thm:convTC} follows the same argument with minor adjustments which are outlined in Remark \ref{rem:diags} and Remark \ref{rem:aspects}.

Recall first a basic algebraic fact.
\begin{lemma}
    Let $f\colon S\rightarrow T$ be homomorphism of commutative rings and let $A$ and $B$ be $T$-algebras. Let $A_S$ and $B_S$ be the $S$-algebras determined by the map $f$. Then there is a natural surjective homomorphism
    \begin{equation}\label{eq:alghom}
        g\colon A_S\ot_S B_S\rightarrow A\ot_T B.
    \end{equation}
\end{lemma}
\begin{proof}
    The tensor product $A_S\ot_S B_S$ is a quotient of $A \ot_\ZZ B$ by the ideal generated by elements $f(s)a\ot b-a\ot f(s)b$ for $s\in S$, $a\in A$ and $b\in B$. This ideal is contained in the ideal of $A\ot_T B$ in $A\ot_\ZZ B$, hence identifying both algebras in Equation~\eqref{eq:alghom} as quotients of $A\ot_\ZZ B$ yields the claim.
\end{proof}
\begin{lemma}[{\cite[Example 1.2.22]{Lazarsfeld2004}}]\label{lem:SMM}
    Let $L_1$ and $L_2$ be ample line bundles on a projective scheme $X$. Then the natural map
    \begin{equation}
        H^0(X,L_1^{a})\ot_\kk H^0(X,L_2^b)\lra H^0(X,L_1^a \ot L_2^b)
    \end{equation}
    is surjective for $a,b\gg 0$.
\end{lemma}
\begin{corollary}\label{cor:RSMM}
    Let $\shL_1$ and $\shL_2$ be $p$-ample line bundles on $Y$. Then the natural map
    \begin{equation}\label{eq:RSMM}
        p_*\shL_1^a \ot_{\shO_B} p_*\shL_2^b \lra p_* \left(\shL_1^a \ot_{\shO_Y} \shL_2^b\right)
    \end{equation}
    is surjective for $a,b\gg 0$.
\end{corollary}
\begin{proof}
    By \cite[Corollary 12.9]{hartshorne1977algebraic} we may assume that the pushforwards $p_*\shL_1^a$, $p_* \shL_2^b$ and $p_* (\shL_1^a\ot\shL_2^b)$ are vector bundles on $B$. It suffices to check that the map in Equation~\eqref{eq:RSMM} is surjective on fibres, which follows from \ref{lem:SMM}.
\end{proof}

Let $(a,b)$ be a pair of nonnegative integers and $F_\bullet \shA$ and $G_\bullet\shB$ two elements of $\fAlg$ with chosen isomorphisms
\begin{equation}\label{eq:isos}
     \shProj_B\shA \cong Y \medspace \text{ and } \medspace \shProj_B\shB\cong Y.\\
\end{equation}
Write $\shR_\shA$ and $\shR_\shB$ for the graded algebras associated to the two Serre line bundles. We have natural morphisms
\begin{equation}\label{eq:surmaps}
    \shA \ra p_*\shR_\shA  \medspace \text{ and } \medspace \shB \ra p_*\shR_\shB.
\end{equation}
By \cite[Lemma 29.14.4]{stacks-project}, there exists a $k_0>0$ such that the maps in Equation~\eqref{eq:surmaps} are isomorphisms in degrees larger than $k_0$. Therefore the map
\begin{equation}
    φ:\shA\ot_{\shO_B} \shB\lra p_*\shR_\shA \ot_{\shO_B} p_*\shR_\shB
\end{equation}
is an isomorphism in degrees larger than $k_0$. Using the isomorphisms in Equation~\eqref{eq:isos} and Corollary \ref{cor:RSMM}, we obtain a surjective morphism
\begin{equation}
    φ:\shA_{(a)}\ot_{\shO_B} \shB_{(b)}\lra p_*\left((\shR_\shA)_{(a)}\ot_{\shO_Y} (\shR_\shB)_{(b)}\right)
\end{equation}
for $a,b>k_0$.

We will from now on use a mix of additive and multiplicative notation for both test degenerations and line bundles.
\begin{definition}
    For any nonnegative integers $a$ and $b$ we define the \emph{weighted product} of two test degenerations
    \begin{equation}\label{eq:combination}
        a[F_\bullet\shA] + b[G_\bullet\shB]
    \end{equation}
    to be given by the filtration
    \begin{equation}
        φ_* (F_\bullet\ot_{(ma,mb)} G_\bullet)(\shA\ot_{\shO_B}\shB),
    \end{equation}
    where $φ_*$ denotes taking the image filtration defined in Definition \ref{def:image} and $m$ is chosen to be the smallest integer so that the statement of Corollary \ref{cor:RSMM} and surjectivity of Equation~\eqref{eq:surmaps} hold.
\end{definition}

\begin{theorem}\label{thm:conv}
    If $τ_1$ and $τ_2$ are $p$-test degenerations for the relatively ample line bundles $\shL_1$ and $\shL_2$, the diagonal product determines a $p$-test configuration for each polarisation on the line segment between $\shL_1$ and $\shL_2$ in the cone $\VV(Y)$ of polarisations (cf. Equation \eqref{eq:Vdef}).
\end{theorem}
\begin{proof}
    This follows from Lemma \ref{lem:tensor} and the fact that we have
    \begin{equation}
        \left(\shProj_B\bigoplus_{k=0}^\infty p_*\left(\shL_1^{ak}\ot \shL_2^{bk}\right),\shO(1)\right) \cong \left(Y,\shL_1^{ak}\ot\shL_2^{bk}\right).
    \end{equation}
\end{proof}

\begin{remark}[Diagonals in finite products of algebras]\label{rem:diags}
    Diagonal products make sense for products of three or more elements of $\fAlg$. First of all, Lemma \ref{lem:SMM} and Corollary \ref{cor:RSMM} generalise to finite products of line bundles of the form $L_1^{a_1}\ot\dotsb\ot L_N^{a_N}$ by an easy induction. This avoids the difficulty of having to make a choice of integer $m$ in the construction of the convex combinations of test configurations several times.

    In particular, if $F_\bullet\shA$, $G_\bullet\shB$ and $H_\bullet\shC$ are in $\fAlg$, the $(a,b,c)$ diagonal can be written as a product pairwise diagonals as
    \begin{equation}\label{eq:frels}
        \begin{split}
            F_\bullet\ot_{(a,b)}G_\bullet\ot_{(1,c)}H_\bullet   &= F_\bullet\ot_{(a,1)}\ot G_\bullet\ot_{(b,c)}H_\bullet\\
                                                                &=F_\bullet\ot_{(a,1)}\ot H_\bullet\ot_{(c,b)}G_\bullet,
        \end{split}
    \end{equation}
    where we omit writing the algebra $\shA\ot_{\shO_B}\shB\ot_{\shO_B}\shC$. The products are clearly associative so we have omitted the parentheses. Verifying Equation~\eqref{eq:frels} only needs to be done at the level of the diagonal subalgebras, since the filtration on diagonal is simply the restriction of the tensor product filtration. The two identities generate the natural associativity and commutativity properties of the pairwise diagonal product in $\fAlg$. The same relations descend to the weighted products in $\test_B(Y)$.
\end{remark}

\begin{remark}\label{rem:aspects}
    There are several potentially confusing aspects about the previous definitions. First, it makes sense to reparametrise the \emph{test class} represented by $aτ+bτ$ by rational numbers in the interval $I_\QQ$. However, convex combinations are \emph{not} well defined for test classes since the diagonal product is clearly not invariant under replacing one of the filtered algebras by a Veronese subalgebra

    Second, in order to define the filtration associated to the weighted product, we needed to assume that $a$ and $b$ were sufficiently large in order to make the multiplication maps in Lemma \ref{lem:SMM} and Corollary \ref{cor:RSMM} surjective. This can be circumvented by replacing both underlying line bundles by a common power at the outset.

    Third, while our construction gives no way of choosing a unique convex combination in $\test_B(Y)$, we see no need to do this. We are ultimately interested in test classes. By Remark \ref{rem:diags}, a convex combination of multiple elements of $\test_B(Y)$ can be done simultaneously and there is no need to iterate a pairwise construction. For test degenerations $τ=([F^1\shA^1],\dotsc,[F^N\shA^N])$ and rational numbers
    \begin{equation}
        t=(t_1,\ldots, t_{N})\in Δ_{N-1} \subset I_\QQ^N
    \end{equation}
    we define $\Conv_N(τ,t)$ to be the test class of the $(mt_1,\ldots,mt_N)$-diagonal in the filtered algebra
    \begin{equation}
        \bigotimes_{i=1}^N F_\bullet\shA^i,
    \end{equation}
    where $m$ is a sufficiently large and divisible integer.
\end{remark}

We summarise the contents of Remark \ref{rem:diags} and Remark \ref{rem:aspects} in the following proposition.
\begin{proposition}
    Given $N$ elements of $\test_B(Y)$, there is uniquely defined map from $I^N$ to the set of test classes of $Y$ relative to $p$. This map is naturally fibred over a subset of the set of rays of $p$-ample line bundles on $Y$.
\end{proposition}

Before proving property (iii) of Theorem \ref{thm:convTC} we state the following lemmas. Donaldson reduced the calculation of the total weight to an nonequivariant calculation. The weight calculation done in Chapter \ref{chap:flags} are based on this idea. See also \cite[Section 2.8.1]{ross2011weighted} for a clear exposition.
\begin{lemma}\label{lem:DPoly}
    Let $X_0$ be a projective $\GG_m$-scheme over the complex numbers with an ample $\GG_m$-linearised line bundle $L$. Then there exists a polarised scheme $(\shY,\shH_L)$ such that the the weight polynomial is given by
    \begin{equation}
        \tr H^0(X_0,L^k) = χ(\shY,\shH_L^k) - χ(X_0,L^k).
    \end{equation}
\end{lemma}
Dervan proved the following generalisation of Donaldson's formula.
\begin{lemma}[{\cite[Lemma 2.30 (iv)]{dervan2014uniform}}]\label{lem:WPoly}
    Keep the notation of Lemma \ref{lem:DPoly} and let $A$ be a $\GG_m$-linearised line bundle on $X_0$. The total weight of the $\GG_m$-representation on the vector space $H^0(X_0,L^k\ot A)$ is given by
    \begin{equation}
        \tr H^0(X_0,L^k\ot A) = \tr H^0(X_0,L^k) - \int_\shY \frac{c_1(\shH_L)^n\cdot c_1(\shH_A)}{n!} k^n + O(k^{n-1}),
    \end{equation}
    for some line bundle $\shH_A$ on $\shY$.
\end{lemma}

\begin{corollary}\label{cor:WWeight}
    Keep the notation of Lemma \ref{lem:DPoly} and let $L_i$ be ample $\GG_m$-linearised line bundles on $X_0$ for $1\leq i \leq N$. We have an identity
    \begin{equation}
        \tr H^0(X_0,\bigotimes_{i=1}^NL_i^{a_ik}) = C_0(a_1,\dotsc,a_N) k^{n+1} + C_1(a_1,\dotsc,a_N) k^n + O(k^{n-1}).
    \end{equation}
    where $C_0(a_1,\dotsc a_N)$ and $C_1(a_1,\dotsc,a_N)$ are polynomials in $a_1,\dotsc,a_N$.
\end{corollary}
\begin{proof}
    Apply Lemma \ref{lem:WPoly} and Lemma \ref{lem:DPoly} to
    \begin{equation}
         L = L_j^k \medspace \text{ and } \medspace A=\bigotimes_{i=1,i\neq j}^N L_j^{a_i k}
    \end{equation}
    for $j=1,\dotsc,N$.
\end{proof}
\begin{claim}
    Property (iii) of Theorem \ref{thm:convTC} holds.
\end{claim}
\begin{proof}
    We show that the Donaldson-Futaki invariant is a continuous rational function in $t$ for $t \in Δ_{N-1}$.

    By the Riemann-Roch formula, there exist polynomials $c_0$ and $c_1$ in $a_i$ such that
    \begin{equation}
        h^0(X, \bigotimes L_i^{a_ik}) = c_0 k^n + c_1 k^{n-1} + O(k^{n-2}).
    \end{equation}
    In particular, there exist positive numbers $c_{0,i}$ such that
    \begin{equation}
        c_0 = \sum_{i=1}^N c_{0,i} a_i^n + O(a_1^{n-1},\dotsc,a_N^{n-1}),
    \end{equation}
    since $L_i$ are all ample.

    By Corollary \ref{cor:WWeight}, the weight function is similarly a polynomial in the $a_i$. We conclude that the function
    \begin{equation}
        t\mapsto\DF\left(t_1τ_1+\dotsb+t_{N-1}τ_{N-1}+(1-\sum_{i=1}^{N-1}t_i)τ_N\right)
    \end{equation}
    is continuous rational function in $t\in Δ_{N-1}$, since the denominator is always positive.
\end{proof}

\begin{remark}
    There is an alternative way to see that the Donaldson-Futaki invariant is continuous which uses an intersection theoretic formula for the Donaldson-Futaki invariant \cite[Proposition 6]{li2011special} which holds for normal test configurations. Assume that $L_1$ and $L_2$ are ample line bundles on $X$ and $F_\bullet R_{L_1}$ and $G_\bullet R_{L_2}$ are admissible. The bigraded Proj
    \begin{equation}
        \mathscr{Z} = \shProj_{\AA^1} \shRees F_\bullet \left(R_{L_1}\ot_{\kk[t]}R_{L_2}\right)
    \end{equation}
    with the Serre line bundle $\shO(a,b)$ is a test configuration for the product $(X\times X,L_1^a\boxtimes L_2^b)$. Restricting $\mathscr{Z}$ to the diagonal yields a test configuration $\X_{a,b}$ for $(X,L_1^a\ot L_2^b)$. The filtration associated to $\X_{a,b}$ is equal to the filtration $\left(F_\bullet\ot_{(a,b)} G_\bullet\right)\left(R_{L_1}\ot R_{L_2}\right)$ so the two test configurations are $\GG_m$-equivariantly isomorphic.

 If we assume that $\mathscr{Z}$ is normal, the intersection theoretic formula for the Donaldson-Futaki invariant \cite[p. 225]{li2011special} implies that the Donaldson-Futaki invariant is continuous in $t$.

 The above argument generalises to weighted products of a finite collection of algebras.
\end{remark}

We give a very simple example of a family of test configurations on a fixed polarised variety.
\begin{example}[A combination of two simple test configurations on a ruled surface]\label{ex:PBundle}
    Let $F$ and $Q$ be very ample line bundles on a curve $C$ of genus $g$ and consider the projective bundle $\PP (F\oplus Q)$ with its $\shO(1)$-polarisation. Let $α$ and $β$ be the $\GG_m$-actions which scale $F$ and $Q$, respectively, with positive weight 1. The two $\GG_m$-actions $α$ and $β$ determine filtrations
    \begin{equation}
        F\subset F\oplus Q
    \end{equation}
    and
    \begin{equation}
        Q\subset F\oplus Q
    \end{equation}
    and corresponding test configuration $\Y_F$ and $\Y_Q$ for $(\PP(F\oplus Q),\shO(1))$. The associated filtrations are discussed in more detail and generality in Section~\ref{sec:SW}.

     For any natural numbers $a$ and $b$ we define a test configuration of $\PP(F\oplus Q)$ by inducing a $\GG_m$-action on $\PP\left(S^{a+b}(F\oplus Q)\right)$ and restricting to the image of $\PP(F\oplus Q)$ under Veronese embedding of $\PP(F\oplus Q)$. The filtration associated to this test configuration is generated by the grading on the vector bundle $S^{a+b} \left(F\oplus Q\right)$ given in Figure~\ref{fig:PBW}.

    \begin{figure}[htbp]
        \begin{center}
            \begin{tikzpicture}[scale=1]
            \draw[color=black,dashed,thin]      (0,0) node[left,font=\small] {$a+b$} node[below=2.9mm,font=\small] {$S^{(a+b)}F$} -- (1.3, 0) node[below=4.5mm,font=\small] {$\cdots$} -- (2.9,0) node[below=3mm,font=\small]   {$S^{b}F\ot S^{a}Q$} -- (4.1,0) node[below=5mm,font=\small] {$\ldots$} -- (5,0)     node[below=3mm,font=\small] {$S^{(a+b)}Q$};
            \draw[color=black,thin]  (0,3) node[below,left,font=\small] {$a+2b$} -- (3,0) -- (5,2) node[right,font=\small] {$2a+b$};
            \end{tikzpicture}
        \caption{The $t$-grading on the $\shO_{\PP^1}$-module $S^{a+b} \left(F\oplus Q\right)$.}\label{fig:PBW}
    \end{center}
    \end{figure}
    An elementary summation shows that the Donaldson-Futaki invariant of the test configuration $aτ_F+bτ_Q$ is given by
    \begin{equation}
        \begin{split}
            \DF(aτ_F+bτ_Q) = \frac{a^3}{(a+b)^3}\DF(\Y_F) + \frac{b^3}{(a+b)^3}\DF(\Y_Q) \\
            + \frac{a^2b (μ_F+1-g)+ab^2 (μ_G+1-g))}{2 μ_E^2 (a+b)^3}.
        \end{split}
    \end{equation}
    \begin{figure}[!htbp]
        \begin{center}
            \begin{tikzpicture}[yscale=3,xscale=2]
            \node at (0,-0.2) [left,font=\small] {$\DF(τ_F)$};
            \draw[->,thin] (0,0) -- (2, 0) node[below,font=\small] {$t=1$};
            \draw[dashed,thin] (0,0.22) node[left,font=\small]  {$\DF(τ_Q)$} -- (1.92, 0.22);
            \draw[->] (0,-0.5) -- (0, 0.5) node[left,font=\small] {$y$};
            \draw[smooth,color=black,domain=0:1.92,samples=100] plot (\x,{-0.2 + \x - 0.6*\x^2 + 0.1*\x^3}) node[right=0.7mm,font=\small] {$y = \DF((1-t)τ_F+tτ_Q)$};
        \end{tikzpicture}
        \end{center}
        \caption{The Donaldson-Futaki invariant of $(1-t)τ_F+tτ_Q$ plotted against $t=\frac{b}{a+b}$ when $μ_F=2$, $μ_Q=1$ and $g=2$ equals $\tfrac{1}{9}(-1 + 6t - 3t^2 - t^3)$.}\label{fig:FPlot}
    \end{figure}

For example, if $μ_F = 2$ and $μ_Q = 1$, we plot the Donaldson-Futaki invariant for different values of $a$ and $b$ in Figure~\ref{fig:FPlot}. The code for repeating the calculation be found in \cite[Ruled surface interpolations]{codepage}.
\end{example}


\section{Okounkov bodies and the convex transform of a filtrations} 
\label{sec:convex_transform_okounkov_bodies_and_examples}
In this section we describe the behaviour of the convex geometry associated to the variation of filtered linear series coming from the convex structure defined in Section~\ref{sec:operations_on_relative_test_configurations}. We give a brief review of Okounkov bodies and the convex transform associated to an admissible filtration. For more details, we refer to Lazarsfeld-Musta\c{t}\v{a} \cite{lazarsfeld2008convex}, Boucksom-Chen \cite{boucksom2011okounkov}, Witt-Nyström \cite{witt2012test} and Székelyhidi \cite{szekelyhidi2011filtrations}.

Let $X$ be a smooth complex projective variety and $L$ a line bundle on $X$ with ring of sections $R=\bigoplus_{k=0}^\infty H^0(X,L^k)$. Fix a base point $p\in X$ and holomorphic coordinates $z_1,\ldots, z_n$ centred around $p$. Given $f\in R_k$ we may write
\begin{equation}
    f = s z_1^{r_1}\dotsm z_n^{r_n},
\end{equation}
for some $(r_1,\ldots,r_n)\in \ZZ^n$, where $s$ is a holomorphic function on a neighbourhood of $p$ which does not vanish at $p$. We keep the base point and the choice of coordinates fixed throughout the section.

 We define a function $ν\colon R\rightarrow \QQ^n$ by setting
\begin{equation}
    ν(f) = \frac{(r_1,\ldots,r_n)}{k}
\end{equation}
for any such $f\in R_k$.
\begin{definition}
    Define the \emph{Okounkov body} of $L$ by $Δ(L) = \overline{ν(R)}\subset \RR^n$.
\end{definition}
It is well known that $Δ(L)$ is a convex set. Given an admissible filtration $F_\bullet R$, we define
\begin{equation}
    R^{\leq t} = \bigoplus_{k=0}^\infty F_{\lfloor tk\rfloor} R_k.
\end{equation}
This determines a closed convex subset $Δ(L)^{\leq t} = \overline{ν(R^{\leq t})}$.
\begin{definition}
    Define the \emph{convex transform} of $F_\bullet R$ to be
    \begin{equation}
        G(x) = \inf\{ t\colon x\in Δ(L)^{\leq t} \}.
    \end{equation}
\end{definition}
If $x$ is rational we have $G(x) = \inf\left\{\frac{\lev f}{\deg f}\colon ν(f) = x\right\}$. The extension to real numbers is obtained as the pointwise largest function which is lower semicontinuous and agrees with the restriction the subset $Δ(L)\cap \QQ^n$.

Suppose now that $L_1$ and $L_2$ are ample line bundles on $X$. Let $F^i_\bullet R_{L_i}$ be admissible filtrations for $i=1,2$ and let $G_i\colon Δ(L)\rightarrow \RR$ be the convex transforms of the two filtered algebras.

Let $a$ and $b$ be nonnegative integers such that there exists a surjective homomorphism
\begin{equation}
    ψ\colon S = \bigoplus_{k= 0}^\infty (R_{L_1})_{ak}\ot (R_{L_2})_{bk}\lra \bigoplus_{k=0}^\infty H^0(X,(aL_1+bL_2)^k).
\end{equation}
for all $k>0$. The ring $R_{aL_1+bL_2}$ is naturally filtered by the image of $(F^1_\bullet \ot_{(a,b) } F^2_\bullet) S$. The Okounkov body $Δ(aL_1+bL_2)$ is contained in the Minkowski sum $aΔ(L_1)+bΔ(L_2)$.

Set
\begin{equation}
    U = \left\{(x,v)\in\RR^{2n}\colon \frac{x}{2}+v\in aΔ(L_1), \frac{x}{2}-v\in bΔ(L_2) \right\}
\end{equation}
and define a real valued function $\widehat{H}:U\rightarrow \RR$ by setting
\begin{equation}
    \widehat{H}_{a,b}(x,v) = aG_1(\frac{x+2v}{2a}) + bG_2(\frac{x-2v}{2b}).
\end{equation}
\begin{theorem}\label{thm:convo}
    The convex transform $G_{a,b}(x)$ of the weighted product filtration $(F^1_\bullet \ot_{(a,b) } F^2_\bullet)\left(R_{L_1}\ot R_{L_2}\right)$ is equal to the minimiser
    \begin{equation}
        H_{a,b}(x) = \operatorname{min}_{v\in U} \widehat{H}_{a,b}(x,v)
    \end{equation}
    restricted to the Okounkov body $Δ(aL_1+bL_2)$.
\end{theorem}
\begin{proof}
    Let $G_{a,b}(x)$ be the convex transform of the filtration $(F_\bullet \ot_{(a,b)} G_\bullet) (R\ot S)$. We must show that $H_{a,b}(x)=G_{a,b}(x)$ for $x$ in
    \begin{equation}
        Δ(aL_1+bL_2)\subset aΔ(L_1)+bΔ(L_2)
    \end{equation}
 Let $x\in Δ(aL_1+bL_2)\cap\QQ^n$ and let $ν_i$ and $ν_{a,b}$ denote the convex transforms of $F^i_\bullet$ and $F^1_\bullet\ot_{(a,b)}F^2_\bullet$, respectively. We have
    \begin{equation}
        \begin{split}
            G_{a,b}(x) &= \inf \left\{\frac{\lev(f)}{k}\colon f\in \left(R_{aL_1+bL_2}\right)_k\text{ and } \frac{ν_{a,b}(f)}{k} = x\right\}    \\
                 &= \inf \left\{\frac{\lev(g) + \lev(h)}{k}\colon g\in R_{akL_1}, h\in  R_{bkL_2} \text{ and } (ψ\circ ν_{a,b})(g\ot h) = x\right\}\\
                 &\geq \inf \left\{ aG_1(ν_1(g)) + bG_2(ν_2(h))\colon g,h \text{ as above} \right\} \\
                 &\geq H_{a,b}(x).
        \end{split}
    \end{equation}
    On the other hand, let $ε>0$ and fix $y$ and $z$ such that
    \begin{equation}
        H_{a,b}(x) \geq aG_1(y) + bG_2(z) - ε.
    \end{equation}
    There exists $k>0$ such that we can find $g\in (R_{L_1})_{ak}$ and $h\in (R_{L_2})_{bk}$ such that
        \begin{align*}
                &ν_1(g) = y,\quad ν_2(h)=z\\
                &\frac{\lev(g)}{ak} \leq G_1(y) + ε,\thickspace \text{ and }\thickspace\frac{\lev(h)}{bk} \leq G_2(z) + ε,
        \end{align*}
    where $ν_i\colon R_{L_i}\ra Δ(L_i)$ are the two valuations. We have
    \begin{align*}
            G_{a,b}(x)&\leq (\lev(g) + \lev(h))/k && \\
            &\leq aG_1(y) + bG_2(z) + (a+b)ε && \text{by choice of $g$ and $h$}\\
            &\leq H_{a,b}(x) + (a+b+1)ε &&\text{by choice of $y$ and $z$}.
    \end{align*}
    Letting $ε$ tend to $0$ yields
    \begin{equation}
        G_{a,b}(x)\leq H_{a,b}(x).
    \end{equation}
    If $x$ is irrational, the value of $G_{a,b}(x)$ is obtained as the infimum
    \begin{equation}
        \liminf_{δ\to 0}\left\{G_{a,b}(x')\colon |x-x'|<δ\right\}.
    \end{equation}
    The same argument works in this case as well, bearing in mind that we may approximate the value of $G_{a,b}$ at $x$ by $G_{a,b}(x')$ arbitrarily closely since $G_{a,b}(x)$ is convex and bounded from below.
\end{proof}
\begin{remark}
    This result can easily be extended to convex combinations of arbitrary finite collections of test degenerations of $X$.
\end{remark}
\begin{remark}
    It is convenient to work instead with the $\QQ$-line bundle $\frac{aL_1+bL_2}{a+b}$ and reparametrise the family of functions $H_{(a,b)}(x)$ as a function
    \begin{equation}
        H_t\colon Δ\left((1-t)L_1 + tL_2\right)\rightarrow \RR,
    \end{equation}
     where $t$ ranges over the unit interval. We go a step further and identify the range of $H_t$ with a subset of
     \begin{equation}
        V(L_1,L_2) = \mathrm{Conv}\left(Δ(L_1)\times\{0\}, Δ(L_2)\times\{1\} \right)\subset \RR^n\times [0,1].
     \end{equation}
     It would be interesting to know what kind of behaviour the function $H_t$ can exhibit on $V(L_1,L_2)$. The variation of Okounkov bodies was studied by Lazarsfeld-Musta\c{t}\v{a} \cite[Section 4]{lazarsfeld2008convex}.
\end{remark}

If $X$ is toric, Okounkov bodies are a particularly powerful tool. The following examples use the theory of toric varieties. Briefly, the ring of sections of a polarised toric variety $(X_Δ,L)$ corresponding to a polytope $Δ=Δ(L)\subset \RR^n$, where $\RR^n$ contains a fixed lattice $\ZZ^n$, is given by
\begin{equation}
    R=\bigoplus_{k=1}^\infty \frac{\ZZ^n}{k}\cap Δ.
\end{equation}
Sections of $H^0(X,L^{k})$ are identified with points
\begin{equation}
    m/k=(m_1/k,\dotsc,m_n/k)
\end{equation}
in the polytope $Δ$, where $m_i$ are integers. Multiplication of two sections $x$ and $y$ under this identification corresponds to taking their \emph{Minkowski average} $(x+y)/2$ in $Δ$.

\begin{example}[Convex combinations of toric filtrations.]\label{ex:WToric}
    Let $X$ be a toric variety with two line bundles $L_1$ and $L_2$ with section rings $R$ and $S$ isomorphic to the sets of rational points in $Δ(L_1)$ and $Δ(L_2)$, respectively. Let $G_1:Δ(L_1)\rightarrow \RR$ and $G_2:Δ(L_2)\rightarrow \RR$ be lower semicontinuous convex functions and define filtrations
    \begin{equation}\label{eq:CTransform}
        F^f_iR_k = \mathrm{span}_\CC\{x \in P/k : f(x) \leq i \},
    \end{equation}
    and
    \begin{equation}
        F^g_i S_k = \mathrm{span}_\CC\{β \in Q/k : g(β) \leq i \}.
    \end{equation}
    In this case the $(a,b)$-weighted Minkowski average
    \begin{equation}
        \shP =\frac{aΔ(L_1)+bΔ(L_2)}{a+b},
    \end{equation}
    is precisely the Okounkov body of $\frac{aL_1+bL_2}{a+b}$ in the appropriate sense for $\QQ$-line bundles. The family of convex transforms
    \begin{equation}
        G_{a,b}\colon \shP\rightarrow \RR
    \end{equation}
    now characterises the family of test degenerations determined by the weighted product by Donaldson's theory of toric test configurations \cite{Donaldson2002}. Denote $G_t = \frac{G_{a,b}}{a+b}$, where $t=\frac{b}{a+b}$. Studying the behaviour of $G_t$ as $t$ changes may be a useful explicit way to study the variation of test configurations in the weighted product.
\end{example}

\begin{example}\label{ex:P1s}
Consider two $\GG_m$-actions $α$ and $β$ on $\PP^1=\Proj\CC[x,y]$ such that if $(x/y)$ is a local coordinate, $α$ scales $(x/y)$ by weight $c$ and $β$ by $-d$. The filtrations $F^α_\bullet$ and $F^β_\bullet$ defined by $α$ and $β$, respectively, have linear convex transforms on the polytope $P=Q=[0,1]$. Rational points in $[0,1]$ correspond to monomials $x^py^q$ by the bijection
\begin{equation}
    x^py^q\leftrightarrow p/(p+q).
\end{equation}
    It is straightforward to check, either from the definitions or by Theorem \ref{thm:convo}, that the convex transforms of $F_\bullet^α$,$F_\bullet^β$ and $[F_\bullet^{α}] + [F_\bullet^{β}]$ are
    \begin{equation}
        \begin{split}
            f_α(x) &= 1+cx,\\
            f_β(x) &= 1+d(1-x)\\
            f_{α\otβ}(x) &= \max \{1+c(x-1/2), 1-d(x-1/2)\},
        \end{split}
    \end{equation}
    respectively. Geometrically, the corresponding degeneration splits $\PP^1$ into two copies of $\PP^1$ of equal volume intersecting at a fixed point of the $\GG_m$-action. The $\GG_m$-actions on the two components are given by scaling a local coordinate by the integers $c$ and $-d$, respectively.
\end{example}
\begin{example}\label{ex:toric}
    Keep to the notation of Example \ref{ex:P1s}, except now let $c = -d = 1$ and consider the $(a,b)$-diagonal product of filtrations
    \begin{equation}
        (F_\bullet^α\ot_{(a,b)}F_\bullet^β)(\CC[x,y] \ot_\CC \CC[x,y])
    \end{equation}
    for each pair of natural numbers $(a,b)$. The total space of the toric family is, for each pair $(a,b)$, a degeneration of a rational curve into a pair of intersecting curves of lower degree whose ratio of volumes is equal to $t$. As $t$ approaches 0, the limiting convex function corresponds to the vector field $β$. This is also the natural limiting object in $\overline{\test(\PP^1)}$.
    \begin{figure}[!htbp]
    \begin{center}
        \begin{tikzpicture}[scale=1.1]
        \draw[->,thin] (-0.5,0) -- (4.5,0) coordinate (x axis);
        \draw[->,thin] (0,-0.5) -- (0,4.5) coordinate (y axis);
        \draw (-0.48,3.6) node[font=\small] {$\vdots$};
        \draw (0,4.8) node[left=2.8mm] {};
        \draw[color=black,very thick] (0,0) -- node[below] {$P$} (4,0);
        \draw[color=black,thin]  (0,2) node[left=1.8mm,font=\small] {$1/2$} -- (2,0) -- (4,2);
        \draw[color=black,thin]  (0,2.67) node[left=1.8mm,font=\small] {$1/3$} -- (2.67,0) -- (4,1.33);
        \draw[color=black,thin]  (0,3) node[left=1.8mm,font=\small] {$1/4$} -- (3,0) -- (4,1);
        \draw[color=black,thin]  (0,3.2) -- (3.2,0) -- (4,0.8)          node[right] {};
        \draw[color=black,thin]  (0,3.33) -- (3.33,0) -- (4,0.67);
        \draw[color=black,thin]  (0,3.44) -- (3.44,0) -- (4,0.56)       node[right] {};
        \draw[color=black,thin]  (0,4) node[left=2.9mm,font=\small]{$0$} -- (2,2) node[below=0.6mm,font=\small] {\resizebox{3.3mm}{!}{$\Ddots$}}  -- (4,0);
        \end{tikzpicture}
        \caption{The convex functions corresponding to the product $a[F^α_\bullet] + b[G_\bullet^β]$ in $\overline{\test(\PP^1)}$ for different values of $t$, where we denote $t = b/(a+b)$.}
    \end{center}
    \end{figure}
\end{example}


\section{Pullback test configurations} 
\label{sec:pullback_test_configurations}
We fix a projective morphism $p\colon Y\rightarrow B$ and let $L$ be an ample line bundle on $B$. In Section~\ref{sec:filtrations_and_k_stability} we defined test configurations which are fibred over $B$ in a $\GG_m$-equivariant way. As a further application of the constructions of the previous sections, we construct test configurations of $Y$ which are naturally fibred over a test configuration of $B$ called \emph{pullback test configurations}.

Let $F_\bullet R_L$ be an element of $\test(B)$. After replacing $L$ with a power if necessary, we obtain an admissible filtration of $\shR_L$, also denoted by $F_\bullet \shR_L$. Let $\shL$ be a relatively ample line bundle on $Y$ and define a map
\begin{equation}
    Φ_{(a,b)}:\test(B)\rightarrow \test_B(Y)
\end{equation}
by letting $Φ(F_\bullet\shR_L)$ be the the filtration
\begin{equation}
    \bigoplus_{k=0}^\infty \shA_{ak}\ot F_\bullet L^{bk}.
\end{equation}

\begin{lemma}
    The map $Φ$ preserves admissible filtrations.
\end{lemma}
\begin{proof}
    This is a special case of Lemma \ref{lem:tensorAdm}.
\end{proof}

\begin{definition}\label{def:pullback}
    We say that $Φ_{(a,b)}(F_\bullet \shR_L)$ is the \emph{pullback of $F_\bullet \shR_L$ weight $(a,b)$}.
\end{definition}

\begin{example}[Pullbacks of test configurations]
    Assume that $F_\bullet R_L$ is a finitely generated admissible filtration and let $\mathscr{B}$ be the scheme $\Proj F_\bullet R_L$. Considering the algebra $\shRees_{\shO_B} Φ_{(a,b)}(F_\bullet \shR_L)$ as a $\shO_{\mathscr{B}}$-algebra determines a morphism
    \begin{equation}
        \Y=\shProj_B \shRees_{\shO_B} Φ_{(a,b)}(F_\bullet \shR_L)
    \end{equation}
    such that the diagram
    \[
    \begin{tikzcd}
    \Y \arrow{r}{} \arrow{dr} & \mathscr{B} \arrow{d}\\
    & \AA^1
    \end{tikzcd}
    \]
    commutes.
\end{example}

\begin{definition}\label{def:adipull}
    Define the line bundle
    \begin{equation}
        \shL_{a,b}=\shO(a)\ot p^*L^b
    \end{equation}
    on $\shProj_B \shA$. Alternatively, the line bundle $\shL_{a,b}$ is the Serre line bundle on $\shProj \left(\shA\ot_{\shO_B}\shR_L\right)_{(a,b)}$. We have already seen in Lemma \ref{lem:adiample} that given a locally finitely generated $p$-test degeneration $G_\bullet\shA\in \test_B(Y)$, the relative test configuration
    \begin{equation}
        \Y = \shProj_B \left(\shA\ot_{\shO_B}\shR_L\right)_{(a,b)}
    \end{equation}
    is ample for $d\gg 0$. Denote the Serre line bundle on $\Y$ by $\scrL_{(a,b)}$. In particular, if $a=1$ simply write $\scrL_{(a,b)} = \scrL_{b}$.
\end{definition}

We give two examples of a nice phenomenon which happens with pullback test configurations for adiabatic polarisations. The first example, due to Stoppa \cite{stoppa2007unstable}, was already mentioned in Section \ref{sub:k_stability_of_csck_manifolds}.
\begin{example}\label{ex:SPB}
     Let $p\colon Y\rightarrow B$ be a blow up of a zero dimensional subscheme $Z$ and $\shB$ a test configuration for $(B,L)$. Let $\Y$ be the pullback of $\shB$ of weight $(1,m)$. Then the Donaldson-Futaki invariant of the test configuration $\DF(\Y,\scrL_m)$ is given by
        \begin{equation}\label{eq:dfpull}
            \DF(\Y,\scrL_m) = \DF(\shB) - Cm^{1-n} + O(m^{-n}),
        \end{equation}
        where $n$ is the dimension of $B$ and $C$ is a positive constant.
\end{example}
Similar results were also proved for slope stability by Ross and Thomas \cite[Section 5.5]{RossThomas}, and later by Stoppa \cite[Lemma 3.1]{stoppa2011relative}.

 The second example is due to Ross and Thomas \cite[Section 5.4]{RossThomas}.
\begin{example}\label{ex:RTPB}
     Let $p\colon Y\rightarrow B$ be a projective bundle or a flag bundle and $B'$ a subscheme of $B$. Let $\Y$ be a pullback test configuration with weight $(1,m)$ of the slope test configuration of $\mathcal{I}_{B'}\subset \shO_B$ defined in Remark \ref{rem:slope} with slope parameter 1. Then the leading term in $m\in\NN$ of the Donaldson-Futaki invariant of the test configuration $\DF(\Y,\scrL_m)$ is given by
        \begin{equation}\label{eq:dfp}
            \DF(\Y,\shL_m) = \DF(\shB)+ O(m^{-1}),
        \end{equation}
    where $(\shB,L)$ is the test configuration determined by the pullback of $B'$.

    Ross and Thomas presented the calculation in the case of a projective bundle but the flag bundle case follows verbatim.
\end{example}

\begin{remark}
    In the following we have various spaces of sections endowed with natural $\GG_m$-actions. For each vector space we wish to have a succinct and obvious notation for the trace function defined on page \pageref{lem:polys}. Given a vector space $V$ with a natural $\GG_m$-action, we write the trace function simply as $\tr V$.
\end{remark}

\begin{remark}\label{rem:product}
    A product of two cscK polarised varieties $(X_1,L_1)$ and $(X_2,L_2)$ is cscK with respect to the product polarisation $L_1\ot L_2$. It is our hope that an algebraic proof of the K-stability of the polarisation $L_1\ot L_2$ would be found. The difficulty is having to consider test configurations which are not pullbacks from either $X_1$ or $X_2$. We believe it should not be necessary to consider these more complicated test configurations to decide whether $(X_1\times X_2, L_1\ot L_2)$ is K-stable, in contrast with the example of an unstable product of two curves in \cite{ross2006unstable}.
\end{remark}

\begin{remark}[Toric bundles]\label{rem:tbundle}
    There is a simple type of relative test configuration that has appeared in \cite{apostolov2006stability}. Let $\EE$ be a principal $\GL(n,\CC)$-bundle over $B$ and consider a torus bundle $\mathbb{T}$ in $\EE$ with fibre $(\GG_m)^{\times e}$. Then one may define a fibrewise orbit closure $Y$ of $\mathbb{T}$ using the theory of toric varieties. The theory of toric test configurations developed in \cite{Donaldson2002} generalises to this context and yields test configurations which intuitively degenerate fibres of the projection $Y\rightarrow B$ in a uniform way. The authors of \cite{apostolov2006stability} proved partial results about the extremal YTD correspondance for adiabatic polarisations on toric bundles constructed in this way.

    We think of the test configurations defined in \cite{apostolov2006stability}, which preserve the homotopy type of the associated principal bundle but degenerate the fibres of $p\colon Y\rightarrow B$, as complementary to the test configuration defined in Chapter \ref{chap:flags}. We studied test configurations which changes the homotopy type of the associated principal $\GL(n,\CC)$-bundle but preserves the fibres of $p$.
\end{remark}

In light of the previous remarks, we conclude that particularly on adiabatic polarisations of $Y$, there are two natural families of test configurations: ample $p$-test configurations and pullback test configurations. A perhaps naive conjecture we wish to make, motivated by known partial results on blowups, projective bundles, rigid toric bundles blowups and now flag bundles, is that these two test classes of test degenerations characterise the stability of adiabatic polarisations in the following sense.
\begin{conjecture}\label{conj:adistab}
    Let $p\colon Y\rightarrow B$ be a projective morphism with $(B,L)$ a polarised variety and $\shL_{(a,b)}$ as in Definition \ref{def:adipull}. Then there exists an integer $b_0>0$ such that the pair $(Y,\shL_{(a,b)})$ is K-stable (K-polystable, K-semistable) for $b > b_0$ if and only if it is K-stable with respect to test configurations in $\test_B(Y,\shL_{(a,b_0)})$ and pullback test configurations under the projection $p$ with weight $(a,b_0)$.
\end{conjecture}

\begin{remark}[Some remarks about Conjecture \ref{conj:adistab}]
    The hypothesis that projective morphism should be enough to yield the statement may be overenthusiastic as we have only studied very simple examples (flag bundles in Chapter \ref{chap:flags} and certain closed immersions in Chapter \ref{chap:sub}) in this work.

    We also conjecture that the Conjecture \ref{conj:adistab} holds with admissible filtrations and \kbar-stability in place of test configurations and K-stability.

    Finally, an example in Ross \cite{ross2006unstable} shows that the statement of the conjecture does not hold for arbitrary polarisations on $Y$.
\end{remark}


\section{Natural filtrations of shape algebras} 
\label{sec:SW}
Fix a coherent sheaf $\shE$ with a subsheaf $\shF$ on a scheme $B$, a partition $λ$ with jumps given by $r$. Then we define a filtration $W_\bullet S_λ(\shE)$ which is generated by $\shF\subset \shE$ (cf. Definition \ref{def:FTA} and Definition \ref{def:FGD1}). The basic idea goes back to Griffiths, who defined a natural filtration of an exterior power of a vector bundle \cite{griffiths1979algebraic}.

\begin{example}\label{ex:PSlope}
    The filtration of $S(\shE)$ generated by $\shF\subset \shE$ is given by
    \begin{equation}
        \begin{split}
            \shF\subset \shE \oplus S^2\shF \subset \shE \oplus \shF\cdot\shE \oplus S^3\shF\\
             \subset \shE\oplus S^2\shE \oplus \shF\cdot S^2\shE \oplus S^4\shE \subset \dotsb.
        \end{split}
    \end{equation}
    Here we have used the notation $\shF\cdot \shE$ to mean tensors in $S^2\shE$ which are in the image of the symmetrisation map $\shF\ot \shE\rightarrow S^2 E$. Note that the same filtration can be obtained from the filtration $\mathcal{I}_{\PP\shF}\subset \shO_{\PP\shE}$ using Remark \ref{rem:slope}.
\end{example}

In general, the subsheaf $\shF\subset\shE$ generates a filtration
\begin{equation}
    W_\bullet \shE^λ = (W_\bullet S_λ(\shE))_1,
\end{equation}
which we write in terms of the factors of $\shF$ and $\shE$ in the tensor algebra $T(\shE)$ as
\begin{equation}\label{eq:SF1}
    W_i \shE^λ = c_λ \left(\shF^{\ot i}\ot \shE^{\ot(l-i)}\right)\ot_{\CC[\mathfrak{S}_i]\times\CC[\mathfrak{S}_{l-i}]}\CC[\mathfrak{S}_l].
\end{equation}
Here $c_λ$ is the Young symmetriser (cf. Definition \ref{def:schur}) and $\CC[\mathfrak{S}_i]$ denotes the group algebra of the symmetric group, which acts on $T(\shE)$ by permuting the tensor factors. In other words, the module $W_i \shE^λ$ is generated by tensors with at least $i$ factors are contained in $\shF$. The filtration in Equation~\eqref{eq:SF1} is a finite decreasing filtration and a simple change of indexing yields an increasing filtration which generates an admissible filtration of the algebra $S_λ(\shE)$. We call this filtration the \emph{$\shF$-weight filtration of $S_λ(\shE)$} and denote it by $\widehat{W}^\shF_\bullet S_λ(\shE)$. In contrast, we denote the filtration generated by the descending filtration of Equation~\eqref{eq:SF1} of increasing powers of $\shF$ by $W_\bullet \shF S_λ(\shE)$.

\begin{remark}
    The test configuration determined by the subsheaf $\shF\subset \shE$ for flag bundles is not given by the theory of slope stability as it does in the case of projective bundles Example \ref{ex:PSlope}, but by a more complicated filtration of the structure sheaf $\shO_{\Flag_r(\shE)}$ (Remark \ref{rem:slope0} and Remark \ref{rem:slope}). This filtration is obtained from a flag of \emph{relative Schubert varieties} determined by increasing incidence conditions with the subsheaf $\shF$.
\end{remark}

\begin{example}[Computation of the weight function]\label{rem:decomposable}\label{rem:decom}
    Consider a direct sum $\shF\oplus \shQ$ of coherent sheaves on $B$. We write
    \begin{equation}\label{eq:little}
            S_λ(\shF\oplus \shQ)_k = (\shF\oplus \shQ)^{kλ} = \bigoplus_{|ν|+|μ|=k|λ|}M^{kλ}_{νμ} \shF^ν\otimes \shQ^μ
    \end{equation}
    using the Littlewood-Richardson rule. We have
    \begin{equation}
        W_i E^{kλ} = \bigoplus_{|ν| \leq i}M^{kλ}_{νμ} \shF^ν\otimes \shQ^μ.
    \end{equation}
    We define the corresponding weight function
    \begin{equation}\label{eq:wfun1}
        \begin{split}
            w(k) &= \sum_{i=0}^\infty i \left(χ(W_i S_{λ}(\shF\oplus \shQ)_k)   - χ(W_{i-1}S_λ(\shF\oplus \shQ)_k) \right)\\
            &=\sum_{i=0}^\infty i \bigoplus_{|ν| = i}M^{kλ}_{νμ} \shF^ν\otimes \shQ^μ
        \end{split}
    \end{equation}
    This is the weight function which appeared in Lemma \ref{lem:weight}.
\end{example}

Example \ref{ex:PBundle} generalises to more general flag bundles, but the trick we used in Chapter \ref{chap:flags} does not compute the weight function any longer.
\begin{example}[A product of two simple filtrations of a shape algebra]
    Let $E$ be a vector bundle isomorphic to a direct sum of subbundles $F\oplus Q$. Let $\shA = S_λ(E)$ be a shape algebra for $\Flag_r(E)$ with a polarisation $\shL_λ(A)$. Consider the two filtrations $W^F_\bullet \shA$ and $W^Q_\bullet \shA$. The filtration
    \begin{equation}\label{eq:s2filt}
         F\ot Q\subset F\ot E \oplus Q\ot E = S^2 E
    \end{equation}
    generates the tensor product filtration $(W^F\ot_{(1,1)} W^Q) (\shA\ot_{\shO_B} \shA)$ of the $(1,1)$-diagonal of $\shA\ot_{\shO_B}\shA$ via the projection
    \begin{equation}
        α\colon S_λ(S^2 E)\rightarrow S_{2λ}(E).
    \end{equation}
    The kernel of $α$ is a complicated object which can be described by decomposing the representation $S_λ(S^2 E)$ into irreducible representations. The composition of Schur functors is called \emph{plethysm} \cite[p. 63]{Weyman}.
\end{example}


\chapter{Further directions} 
\label{sub:further_research}
We end by outlining three directions in which this work can be developed. Fix a smooth scheme $B$ over $\CC$ and vector bundle $E$ of rank $r_E$ on $B$.
\section{Chern character formula} 
\label{sec:FCCF}
We hope to find a generalisation to the Chern character formula of Theorem \ref{thm:maina}. The proof we presented required the assumption that $λ$ is proportional to the canonical partition $σ_{r_E,r}$ for some tuple $r$, but it is easy to verify computationally that this assumption is not required for the statement to be true in many special cases. Perhaps it is possible to use Schubert calculus to reduce inductively to the case solved in this thesis. The assumption on the partition forced us to make a highly undesirable restriction in our choice of polarisation for the flag bundle in our discussion of its K-stability in Chapter \ref{chap:flags}.

Decompose $\ch E^{kλ}$ as follows
\begin{equation}\label{eq:chernApp}
    \ch E^λ =\rank E^{kλ} \sum_{i=1}^bB_i(E,kλ),
\end{equation}
where $B_i(E,λ)$ has degree $i$ in the Chow ring of $B$. Then expand $B_i(E,kλ)$ by decreasing degree in $k$ as
\begin{equation}\label{eq:chernDec}
    B_i(E,λ) = B_{i,0} k^i +B_{i,1} k^{i-1} + \dotsb + B_{i,i} k^0,
\end{equation}
It seems that a general closed formula for the polynomials $B_{ij}(E,λ)$ in the expansion \ref{eq:chernDec} should be attainable, generalising Manivel's beautiful result stated in Theorem \ref{thm:manivel}.

\section{Flag bundles and projective bundles} 
\label{sec:FRKS}
    Let $(B,L)$ be a smooth polarised variety of dimension $b$ and $E$ is a vector bundle on $B$.

    If the underlying vector bundle has higher rank, K-stability of its flag bundles depends on higher Chern classes, which were cancelled out by considering adiabatic polarisations in Section~\ref{sec:anybase}. It would be interesting to know if such dependence has a geometric interpretation. This would require generalising Theorem \ref{thm:maina} describing terms in Equation~\eqref{eq:chernDec}.

    In the adiabatic case that it suffices to calculate $B_{i,0}$ and $B_{i,1}$. While this is possible for fixed $k$ and $λ$, it does not seem easy to generalise the arguments of \cite{manivel1994theoreme} or Chapter \ref{chap:chern} to obtain the coefficients $B_{i,j}$. For general polarisations, the knowledge of the term $B_{3,1}$ would immediately allow the calculation of Donaldson-Futaki invariants of any test configuration induced a subbundle filtration $F\subset E$, and the base $B$ has dimension 2. It may be possible to extend the arguments of Chapter \ref{chap:chern} to this case.

    Classical flag varieties which are studied in this work are only one example of a more general construction. Let $G$ be a semisimple complex group. Then quotients of $G$ by subgroups containing the Borel subgroup of $G$ are projective varieties. We call such a variety a \emph{generalised flag manifold}. They are classified by subsets of nodes on Dynkin diagrams of the Dynkin diagram of the corresponding group $G$. From the point of view of Kähler geometry, generalised flag manifolds have very similar properties to the classical ones.

    A Borel-Weyl pushforward formula, similar to one stated in Section~\ref{sec:relative_flag_varieties} for classical flag bundles, also holds for the symplectic and orthogonal groups \cite[Chapter 4]{Weyman}. For example, if $F$ is a vector bundle of even rank on the base $B$ and
    \begin{equation}
        \langle\cdot,\cdot\rangle: F\times_B F\rightarrow \CC
    \end{equation}
    is a symplectic form. We define the isotropic flag variety $\mathop{\shI\shF\it{lag}}\nolimits_r(E)$ of $r$-flags of isotropic subspaces in $F^*$. Subbundles of $F$ can be used to define test configurations of $\mathop{\shI\shF\it{lag}}\nolimits_r(E)$. It would be interesting to know if the behaviour of the Donaldson-Futaki invariants is similar to that seen in Chapter \ref{chap:flags}.


\section{Relative K-stability and operations on test configurations} 
\label{sec:operations_on_test_configurations}

Ampleness of the relative test configurations was not discussed in this work. This is a fundamental property which brings us back to the theory of K-stability. An effective result is not known to us even in the flag bundle case.

We believe that explicitly computing Donaldson-Futaki invariants of families of test configurations in examples can be used to exhibit  new interesting behaviour of K-stability in the cone of polarisations. We hope this may help in establishing a conjectural picture for the behaviour of K-stability in families of polarised varieties where the polarisation $L$ varies on a fixed underlying variety $X$.

The calculations presented in this work could be generalised to give further examples of K-unstable varieties. For example, the stability of higher dimensional projective bundles is still wide open over a higher dimensional base and Donaldson-Futaki invariants have only been computed for very simple test configurations. Finding an explicit formula for the Donaldson-Futaki invariant similar to one found in Example \ref{ex:PBundle} should be possible in higher dimensions, particularly, if the vector bundle is a direct sum of two line bundle. We believe that it should be possible to, for example, find a examples of \emph{nonalgebraic obstructions} on both rational and irrational polarisations this way by using Remark \ref{rem:irrextension}.

Although we do not expect it to have applications to K-stability, describing the convex geometry associated to convex transforms on moving Okounkov bodies as the polarisation varies, discussed in Section \ref{sec:convex_transform_okounkov_bodies_and_examples}, is an interesting on its own right.



\renewcommand{\theequation}{\Alph{chapter}.\arabic{equation}}
\appendix 

\chapter{Appendix}

\section{Combiproofs} 
\label{sec:combiproofs}

We include the proofs of the combinatorial formulae for completeness.
\begin{lemma}
Let $k$ and $n$ be integers and let $p(k)={k+n-1\choose n-1}$. Then
\begin{equation}
     \sum_i i^2 {n-2+k-i \choose n-2} = \frac{(n+2k-1)(k+n-1)!}{(k-1)!(n+1)!} = ( 2k^2 + k(n-1) )p(k)
\end{equation}
and
\begin{equation}
    \sum_{i,j} ij {n-3+k-i-j \choose n-3} = \frac{(k+n-1)!}{(k-2)!(n+1)!} = k(k+1) p(k).
\end{equation}
\begin{proof}
We prove the first identity by induction on $n$ and $k$. Let
\begin{equation}
    f(k,n)=\sum_i i^2 {n-2+k-i \choose n-2}
\end{equation}
Using the identity
\begin{equation}
    {n \choose k}= {n-1 \choose k-1} + {n-1 \choose k},
\end{equation}
which holds for all $0\leq k \leq n-1$ we see that
\begin{equation}
    \begin{split}
        f(k,n)&=\sum_{i=1}^k i^2 {n-2+k-i \choose n-2}\\
        &= k^2 + \sum_{i=1}^{k-1} i^2 \left({n-3+k-i \choose n-3} + {n-2+(k-1)-i \choose n-2}\right)\\
        &= k^2 + f(k,n-1) - k^2 + f(k-1,n)\\
        &= f(k-1,n)+f(k,n-1).
    \end{split}
\end{equation}
Finally we verify that
\begin{equation}
     \frac{(n+2k-3)(k+n-2)!}{(k-2)!(n+1)!} + \frac{(n+2k-2)(k+n-2)!}{(k-1)!n!} = \frac{(n+2k-1)(k+n-1)!}{(k-1)!(n+1)!}.
\end{equation}
This completes the induction step. The base case follows from verifying the cases $f(k,2)$ and $f(1,n)$.

The proof of the second identity is almost identical. Let
\begin{equation}
    g(k,n)=\sum_{i=1}^k\sum_{j=1}^{k-i} ij {n-3+k-i-j \choose n-3}.
\end{equation}
Again we have
\begin{equation}
    \begin{split}
    g(k,n) &= \sum_{i=1}^k i(k-i) + \sum_{i=1}^{k-1}\sum_{j=1}^{k-i-1} ij \left({n-4+k-i-j \choose n-4}+{n-3+k-1-i-j\choose n-3}\right)\\
    &= \sum_{i=1}^k i(k-i) + g(k,n-1) - \sum_{i=1}^k i(k-i) + g(k-1,n)\\
    &= g(k-1,n)+g(k,n-1).
    \end{split}
\end{equation}
Verify the right hand side as above by computing
\begin{equation}
     \frac{(k+n-2)!}{(k-3)!(n+1)!} + \frac{(k+n-2)!}{(k-2)!n!} = \frac{(k+n-1)!}{(k-2)!(n+1)!}.
\end{equation}
The base case follows from verifying the cases $g(k,2)$ and $g(1,n)$.
\end{proof}
\end{lemma}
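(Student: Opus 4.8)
The plan is to prove both identities by the generating-function (``snake-oil'') method, regarding $n$ as a fixed parameter and $k$ as the variable. First I would record the elementary power series $\sum_{i\ge 0} i x^i = x/(1-x)^2$ and $\sum_{i\ge 0} i^2 x^i = x(1+x)/(1-x)^3$ (the latter obtained from $i^2 = i(i-1)+i$), together with the negative-binomial expansion $\sum_{j\ge 0}\binom{m+j}{m}x^j = (1-x)^{-(m+1)}$. The point is that each left-hand side is a Cauchy product of such series, so its value at $x^k$ can be read off after collecting powers of $(1-x)$.

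For the first identity, $\sum_i i^2\binom{n-2+k-i}{n-2}$ is the coefficient of $x^k$ in $\bigl(\sum_i i^2 x^i\bigr)\bigl(\sum_j \binom{n-2+j}{n-2}x^j\bigr) = \frac{x(1+x)}{(1-x)^3}\cdot\frac{1}{(1-x)^{n-1}} = \frac{x+x^2}{(1-x)^{n+2}}$. Extracting the coefficient gives $\binom{n+k}{n+1}+\binom{n+k-1}{n+1}$; factoring out $\frac{(n+k-1)!}{(n+1)!(k-1)!}$ collapses this to $\frac{(n+2k-1)(k+n-1)!}{(k-1)!(n+1)!}$, and then expressing the factorials through $p(k)=\binom{k+n-1}{n-1}$ produces the stated closed form. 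For the second identity the same mechanism applies: $\sum_{i,j} ij\binom{n-3+k-i-j}{n-3}$ is the coefficient of $x^k$ in $\bigl(\sum_i i x^i\bigr)^2\cdot\frac{1}{(1-x)^{n-2}} = \frac{x^2}{(1-x)^{n+2}}$, whose $x^k$-coefficient is $\binom{n+k-1}{n+1} = \frac{(k+n-1)!}{(k-2)!(n+1)!}$, again rewritten via $p(k)$.

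An alternative, which is close to what the shift-by-one bookkeeping naturally suggests, is a direct double induction on $(k,n)$: applying Pascal's rule $\binom{N}{K}=\binom{N-1}{K-1}+\binom{N-1}{K}$ inside the summands, and checking that the ``boundary'' contributions $\sum_i i^2$ (respectively $\sum_i i(k-i)$) cancel, yields the two-term recurrences $f(k,n)=f(k-1,n)+f(k,n-1)$ and $g(k,n)=g(k-1,n)+g(k,n-1)$; one then verifies that the proposed closed form satisfies the same recurrence by a single factorial manipulation, and settles the base cases $n=2$, $k=1$ for $f$ and $n=2$, $k=2$ for $g$.

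I do not expect a genuine obstacle here: the only delicate point in either approach is controlling the summation ranges and the degenerate binomial coefficients — in particular, confirming that summands with a nonpositive lower index vanish, so that the finite sums agree term-by-term with the formal power series — and observing that the ``factorial'' and the ``$p(k)$ times a polynomial'' presentations of the answer are interchangeable by one cancellation. I would lead with the generating-function argument, since it makes the appearance of the exponent $n+2$ transparent and keeps the case analysis to a minimum, and relegate the induction to a remark.
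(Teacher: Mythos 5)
Your generating-function argument is correct and is a genuinely different route from the paper's. The paper proves both identities by a double induction: Pascal's rule inside the summand yields the recurrences $f(k,n)=f(k-1,n)+f(k,n-1)$ and $g(k,n)=g(k-1,n)+g(k,n-1)$, after which the factorial closed forms are checked against the recurrence and the base cases $f(k,2)$, $f(1,n)$, $g(k,2)$, $g(1,n)$ — this is exactly the ``alternative'' you sketch in your second paragraph. Your primary (snake-oil) argument instead reads each sum off as $[x^k]$ of $\frac{x+x^2}{(1-x)^{n+2}}$ and $\frac{x^2}{(1-x)^{n+2}}$ respectively, which correctly produces $\binom{n+k}{n+1}+\binom{n+k-1}{n+1}=\frac{(n+2k-1)(k+n-1)!}{(k-1)!(n+1)!}$ and $\binom{n+k-1}{n+1}=\frac{(k+n-1)!}{(k-2)!(n+1)!}$; it is shorter, makes the exponent $n+2$ transparent as you say, and your point about the Cauchy-product index range matching the finite summation range ($i+j=k$ with $i,j\ge 0$) is the only bookkeeping needed. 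One caveat: the final rewriting ``in terms of $p(k)$'' does not actually produce the lemma's third expressions as printed — since $p(k)=\binom{k+n-1}{n-1}=\frac{(k+n-1)!}{(n-1)!\,k!}$, the correct forms are $\frac{2k^2+k(n-1)}{n(n+1)}\,p(k)$ and $\frac{k(k-1)}{n(n+1)}\,p(k)$, so the stated $(2k^2+k(n-1))p(k)$ and $k(k+1)p(k)$ are off by a factor of $n(n+1)$ (and $k(k+1)$ should be $k(k-1)$, consistent with the $\frac{k(k-1)}{e(e+1)}$ actually used in the proof of Lemma~\ref{lem:sym_chern}). This is a typo in the lemma statement rather than a flaw in your derivation — the paper's own proof likewise only establishes the middle factorial expressions — but your claim that the factorial form ``produces the stated closed form'' should be amended to record the corrected $p(k)$ coefficients.
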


\begin{remark}
    Let $μ$ be a partition. Higher degree terms of Chern characters of symmetric bundles can be computed from a more general formula for    $f(k,n,μ)$, where
\begin{equation}
    f(k,n,λ)=\sum_{i_1=1}^k\sum_{i_2=1}^{k-i_1}\text{ }\cdots\sum_{i_u=1}^{k-(i_1+\dotsb+i_{u-1})}
         i_1^{j_1}\cdots i_u^{j_u}{n + k - c_1(λ) - u - 1 \choose n - u - 1},
\end{equation}
    where we denote $c_1(λ) = u$.
\end{remark}


\section{An elementary proof of Arezzo-Della-Vedova's formula} 
\label{sec:an_elementary_proof_of_arezzo_della_vedova_s_formula}
For completeness, we present an elementary derivation of the formula for the Futaki invariant of a complete intersection along the same lines as \cite[Section 4]{arezzo2011k}.
\begin{definition}
  Let $p(k)$ be a polynomial in $k$ with coefficients in an arbitrary ring and $\um$ a vector of $u$ natural numbers $(s_1\ldots s_u)$. Define
    \begin{equation}
        \begin{split}
            p^{\um}(k)=&p(k)-p(k-s_1)+\cdots+p(k-s_u) + \sum_{i\neq j} p(k-s_i-s_j)\\
            &+\cdots+ (-1)^q p(k-s_1-\cdots-s_u)
        \end{split}
    \end{equation}
\end{definition}
\begin{lemma}\label{lem:hilb}
    Let
    \begin{equation}
        p(k) = a_0k^n+a_1k^{n-1}+O(k^{n-2})
    \end{equation}
    be a polynomial of degree $n$ with coefficients in an arbitrary ring and $\um=(s_1\ldots s_q)$. Then $p^{\um}(k)$ is a polynomial of degree $n-q$ and if we write
    \begin{equation}
        p^{\um}(k) = \sum_{i=0}^{n-u}c_i k^{n-u-i}
    \end{equation}
    the first two coefficients are given by
    \begin{equation}\label{eq:topc}
        c_0=C(\um) a_0
    \end{equation}
    and
    \begin{equation}\label{eq:secondc}
        c_1=C(\um)\left(\frac{n-u}{n}\right)\left(a_1-\frac{n\sum_{i=1}^us_i}{2}a_0 \right),
    \end{equation}
    where
    \begin{equation}
        C(\um)=\left(\prod_{i=1}^us_i\right)\frac{n!}{(n-u)!}.
    \end{equation}
\end{lemma}
\begin{proof}
    The proof is an easy induction on $u$. If $r=1$ the statement is easy to verify. Let $m\in\NN$. We have
    \begin{equation}
        \begin{split}
        p(k)-p(k-m)&=nma_0k^{n-1} + m\left((n-1)a_1 - \binom{n-1}{2}a_0\right)k^{n-2}\\&+O(k^{n-3})
        \end{split}
    \end{equation}
     as required. Assume that the statement holds for all $u$-tuples and let $\um=(s_1,\ldots,s_{u})$ and $\um'=(s_1,\ldots,s_{r+1})$. Notice that
    \begin{equation}
        p^{\um'}(k)=p^{\um}(k)-p^{\um}(k-s_{u+1}).
    \end{equation}
    so by the inductive hypothesis we have
    \begin{equation}
        \begin{split}
            p^{\um}(k-s_{u+1})&= c_0k^{n-u} + \left( c_1-(n-u)s_{r+1} c_0\right) k^{n-u-1}\\ &+ \left(c_2-c_1(n-u-1)s_{u+1}  + c_0\binom{n-u}{2}s_{u+1}^2\right)k^{n-u-2} \\&+ O(k^{n-u-3}),
        \end{split}
    \end{equation}
    where $c_0$ and $c_1$ are as in the statement of the Lemma. Finally, we verify that
    \begin{equation}
        \begin{split}
            (n-u)s_{u+1}r!\left(\prod_{i=1}^us_i\right)\binom{n}{u} a_0 = (u+1)!\left(\prod_{i=1}^{u+1}s_i\right)\binom{n}{u+1} a_0
        \end{split}
    \end{equation}
    and
    \begin{equation}
        \begin{split}
            &c_1(n-u-1)s_{u+1} - c_0\binom{n-u}{2}s_{u+1}^2 \\
            &=(u+1)!\left(\prod_{i=1}^{u+1}s_i\right)\binom{n-1}{u+1}\left( a_1-\frac{n\sum_{i=1}^{u+1} s_i}{2}a_0\right)
        \end{split}
    \end{equation}
    as required.
\end{proof}
\begin{proof}[Proof of Proposition \ref{prop:cidf}]
We will use the Koszul resolution to compute the Hilbert and trace polynomials. We have the exact sequence
\begin{gather}{\label{eq:koszulseq}}
        0\rightarrow\shO_Y(k-\sum_{j=1}^u{s_j})\rightarrow\dotsb\rightarrow  \oplus_{i=1}^u\shO_Y(k-s_i)\rightarrow\shO_Y(k)\rightarrow \shO_X(k)\rightarrow 0.
\end{gather}
Thus the Hilbert polynomial of $X$ is given by
\begin{equation}\label{eq:koszul}
    h^0(X,\shO_X(k))=\sum_{j=0}^u\sum_{|I|=j}(-1)^{j}h^0(Y,\shO_Y(k-\sum_{l\in I}s_{l}))
\end{equation}
where the summation is over all subsets $I$ of $\{1,\ldots,r\}$ of size $j$. We denote the Hilbert polynomial of $\shO_Y(1)$ by $h^0_Y(k)$ and expand it as
\begin{equation}
    h^0_Y(k)= a_0 k^n+a_1 k^{n-1}+O(k^{n-2}).
\end{equation}
The highest order terms of the Hilbert polynomial
\begin{equation}
    h^0(X,\shO_X(k))= c_0k^{n-u}+c_1k^{n-u-1}+O(k^{n-u-2}).
\end{equation}
of $\shO_X(k)$ are given by Lemma \ref{lem:hilb}. The trace of the $\GG_m$-action on $X$ is computed similarly. Let $w_Y(k)$ and $w_X(k)$ be the weight polynomials of the $\GG_m$-representations on $H^0(\shO_Y(k))$ and $H^0(\shO_X(k))$, respectively, and write them as
\begin{equation}
    w_Y(k)= b_0k^{n+1}+b_1k^{n}+O(k^{n-1})
\end{equation}
and
\begin{equation}
    w_X(k) = d_0k^{n-u+1}+d_1k^{n-u}+O(k^{n-u-1}).
\end{equation}
By keeping track of the $\ZZ$-grading in the exact sequence in Equation~\eqref{eq:koszulseq}, we find
\begin{equation}
    \begin{split}
        w_X(k)&=\sum_{j=0}^m\sum_{|I|=j}(-1)^{j}w_Y(k- s_{i_1}-\dotsb-s_{i_j})\\
        +&\sum_{j=1}^m\sum_{|I|=j}(-1)^{j}\left(s_1+\dotsb+s_j\right)h^0(Y,\shO(k- s_{i_1}-\dotsb-s_{i_j})).
    \end{split}
\end{equation}
We rewrite this as
\begin{equation}
    w_X(k)=w_Y^{\um}(k) - \sum_{i=1}^u γs_i\left(h^0_Y\right)^{\um_{\hat{i}}}(k-s_i)
\end{equation}
where the hat notation means that the $i$th member of the tuple is omitted, that is
\begin{equation}
    \um_{\widehat{i}}=(s_1,\ldots, s_{i-1},s_{i+1},\ldots s_u).
\end{equation}
Using Lemma \ref{lem:hilb} on $w_Y^{\um}(k)$ and $\left(h^0_Y\right)^{\um_{\hat{i}}}(k-s_i)$, we see that
\begin{equation}
    \begin{split}
        d_0&=\frac{(n+1)!}{(n-u+1)!}\left(\prod_{i=1}^us_i\right)b_0-γ\sum_{j=1}^u\left(s_j\frac{n!}{(n-u +1)!}\frac{\prod_{i=1}^{u}s_i}{s_j}\right)a_0\\
        &=C(\um)\frac{n+1}{n-u+1}\left(b_0-\frac{γu}{n+1}a_0\right)
    \end{split}
\end{equation}
and
\begin{equation}
    \begin{split}
        d_1&=\frac{n!}{(n-u)!}\left(\prod_{i=1}^us_i\right)\left(b_1-\frac{(n+1)\sum_{j=1}^us_j}{2}b_0\right)\\
        &-γ\sum_{i=1}^u(s_i\frac{(n-1)!}{(u-1)!}\frac{\prod_{j=1}^{r}s_j}{s_i}\left(a_1-\frac{n\sum_{l=1}^us_l-s_i}{2}a_0\right)\\
        &+γ\sum_{i=1}^u(n-u+1)s_i^2\frac{n!}{(n-u+1)!}\frac{\prod_{j=1}^{u}s_j}{s_i}a_0\\
        &=C(\um)\left(b_1-γ\frac{u}{n}a_1 + \frac{\sum_{l=1}^us_l}{2}\left((u+1)γa_0-(n+1)b_0)\right)\right).
    \end{split}
\end{equation}

Denote $μ_Y=a_1/a_0$, $ν_Y=b_0/a_0$ and $S=\sum_{i=1}^u s_i$. Notice that
\begin{equation}
    \DF(\Y)=\frac{b_0a_1}{a_0^2}-\frac{b_1}{a_0}=μ_Yν_Y-\frac{b_1}{a_0}.
\end{equation}
The Donaldson-Futaki invariant of $\X$ is therefore given by
\begin{equation}\label{eq:fut}
    \begin{split}
        \DF(\X) &= \frac{d_0c_1}{c_0^2}-\frac{d_1}{c_0}\\
        &=\frac{n+1}{n-u+1}\left(ν_Y-\frac{γu}{n+1}\right) \frac{n-u}{n}\left(μ_Y-\frac{nS}{2} \right)\\
        &-\frac{b_1}{a_0}+γ\frac{u}{n}μ_Y - \frac{S}{2}\left((u+1)γ-(n+1)ν_Y)\right)\\
        &=\frac{(n+1)(n-u)}{(n-u+1)n}\left(ν_Yμ_Y + \frac{nuSγ}{2(n+1)}-\frac{nSν_Y}{2}-\frac{uγμ_Y}{n+1}\right)\\
        &-μ_Yν_Y+\DF(\Y)+γ\frac{u}{n}μ_Y - \frac{(u+1)Sγ}{2} + \frac{(n+1)Sν_Y}{2}\\
        &=\DF(\Y)+\frac{ν_Y-γ}{n-u+1}\left(\frac{(n+1)S}{2} -  \frac{uμ_Y}{n}\right).
    \end{split}
\end{equation}
This completes the proof.
\end{proof}

\bibliography{Stability}
\bibliographystyle{abbrv}
\end{document}